\documentclass[reqno]{amsart}
\usepackage[utf8]{inputenc}
\usepackage{amssymb, amsmath, amsthm, color, enumerate, mathabx, mathrsfs}
\usepackage{bbm}
\usepackage{pdfpages}
\usepackage{url}
\usepackage{tcolorbox}

\usepackage{tcolorbox}

\numberwithin{equation}{section}

\newcommand{\R}{\mathbb{R}}

\newcommand{\Z}{\mathbb{Z}}
\newcommand{\N}{\mathbb{N}}

\newcommand{\B}{\mathbb{B}}
\newcommand{\D}{\mathbb{D}}

\newcommand{\cB}{\mathcal{B}}
\newcommand{\cC}{\mathcal{C}}
\newcommand{\cE}{\mathcal{E}}
\newcommand{\cG}{\mathcal{G}}

\newcommand{\cH}{\mathcal{H}}
\newcommand{\cI}{\mathcal{I}}

\newcommand{\cL}{\mathcal{L}}
\newcommand{\cN}{\mathcal{N}}
\newcommand{\cS}{\mathcal{S}}

\newcommand{\cZ}{\mathcal{Z}}

\newcommand{\ud}{\, \mathrm{d}}

\newcommand{\bx}{\mathbf{x}}

\newcommand{\bz}{\mathbf{z}}

\newcommand{\bxi}{\boldsymbol{\xi}}
\newcommand{\balpha}{\boldsymbol{\alpha}}
\newcommand{\bbeta}{\boldsymbol{\beta}}
\newcommand{\bgamma}{\boldsymbol{\gamma}}

\newcommand{\fa}{\mathfrak{a}}
\newcommand{\fe}{\mathfrak{e}}
\newcommand{\fh}{\mathfrak{h}}
\newcommand{\fn}{\mathfrak{h}}
\newcommand{\fA}{\mathfrak{A}}
\newcommand{\fB}{\mathfrak{B}}
\newcommand{\fE}{\mathfrak{E}}
\newcommand{\fG}{\mathfrak{G}}

\newcommand{\fm}{\mathfrak{m}}
\newcommand{\fN}{\mathfrak{N}}

\def\inn#1#2{\langle#1,#2\rangle}

\theoremstyle{plain}

\newtheorem{theorem}{Theorem}[section]
\newtheorem{lemma}[theorem]{Lemma}
\newtheorem{proposition}[theorem]{Proposition}

\theoremstyle{definition}
\newtheorem{definition}[theorem]{Definition}

\newcommand{\sgn}{\mathrm{sgn}\,}

\newcommand{\main}{\mathrm{main}}
\newcommand{\error}{\mathrm{error}}

\newcommand{\floor}[1]{\lfloor #1 \rfloor }

\newcommand{\supp}{\mathrm{supp}\,}

\newcommand{\diag}{\mathrm{diag}}



\def\anorm#1{ \Big|  #1 \Big| }
\def\Norm#1{ \Big\|  #1 \Big\| }
\def\norm#1{\big\|  #1 \big\| }
\def\inn#1#2{\langle#1,#2\rangle}


\begin{document}

\title[On a planar Pierce--Yung operator]{On a planar Pierce--Yung operator}

\date{\today}

\subjclass[2020]{42B25, 42B20}
\keywords{}

\begin{abstract} We show that the operator
\begin{equation*}
    \cC f(x,y) := \sup_{v\in \R} \Big|\mathrm{p.v.} \int_\R f(x-t, y-t^2) e^{i v t^3} \frac{\ud t}{t} \Big|
\end{equation*}
is bounded on $L^p(\R^2)$ for every $1 < p < \infty$. This gives an affirmative answer to a question of Pierce and Yung. 
\end{abstract}

\author[D. Beltran]{David Beltran}
\address{David Beltran: Departament d’An\`alisi Matem\`atica, Universitat de Val\`encia, Dr. Moliner 50, 46100 Burjassot, Spain}
\email{david.beltran@uv.es}

\author[S. Guo]{Shaoming Guo}
\address{Shaoming Guo: Chern Institute of Mathematics, Nankai University, Tianjin, China\\
and \\
Department of Mathematics, University of Wisconsin-Madison, Madison, WI-53706, USA}
\email{shaomingguo2018@gmail.com}

\author[J. Hickman]{Jonathan Hickman}
\address{Jonathan Hickman: School of Mathematics, James Clerk Maxwell Building, The King's Buildings, Peter Guthrie Tait Road, Edinburgh, EH9 3FD, UK.}
\email{jonathan.hickman@ed.ac.uk}

\maketitle




\section{Introduction}

\subsection{Statement of the main theorem}\label{240613subsection1_1}
Let $d\ge 2$ be an integer and $K \colon \R^{d-1}\to \R$ be a Calder\'on--Zygmund kernel (see \cite[(1.9)]{PY2019} for the precise requirements). Pierce and Yung \cite{PY2019} considered the following operator\footnote{Indeed, they considered a much wider class of operators, with $v(t_1^3 + \cdots + t_{n-1}^3)$ replaced by more general polynomials. We shall come back to this point later.}
\begin{equation}\label{240613e1_0}
\mathcal{C}_d f(x, y):= \sup_{v\in \R} \anorm{
\int_{\R^{d-1}} f(x-t, y-|t|^2)
 e^{iv(t_1^3 + \cdots + t_{n-1}^3)} 
 K(t)\ud t
 },
\end{equation}
where $x\in \R^{d-1}$, $y\in \R$, $t \in \R^{d-1}$. Moreover, in the same paper, whenever $d\ge 3$ they proved that\footnote{Given a list of objects $L$ and real numbers $A$, $B \geq 0$, here and throughout we write $A \lesssim_L B$ or $B \gtrsim_L A$ to indicate $A \leq C_L B$ for some constant $C_L$ which depends only items in the list $L$. We write $A \sim_L B$ to indicate $A \lesssim_L B$ and $B \lesssim_L A$.}  
\begin{equation}\label{240613e1_1}
\|\mathcal{C}_d f\|_{L^p(\R^d)}\lesssim_{p, d, K} \|f\|_{L^p(\R^d)}
\end{equation}
holds for all $1 < p < \infty$. Roughly speaking, their proof relies crucially on a $TT^*$ argument and sophisticated oscillatory integral estimates, both of which require $d\ge 3$.  In particular, they asked whether \eqref{240613e1_1} holds for $d=2$. In this paper, we give an affirmative answer to this question. 

\begin{theorem}\label{thm:main}
For $x, y\in \R$, consider the operator
\begin{equation}\label{240613e1_2}
    \cC f(x,y) := \sup_{v \in \R} \Big|\mathrm{p.v.} \int_\R f(x-t, y-t^2) e^{i v t^3} \frac{\ud t}{t} \Big|,
\end{equation}
initially defined for Schwartz functions $f \in \cS(\R^2)$. 
We have 
\begin{equation*}
\|\cC f\|_{L^p(\R^2)} \lesssim_{p} \|f\|_{L^p(\R^2)}
\end{equation*}
for all $1 < p < \infty$. 
\end{theorem}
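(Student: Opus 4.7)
The plan is to linearize the supremum and then analyze the resulting operator via a dyadic frequency decomposition and a time-frequency (Carleson-type) argument adapted to the parabolic geometry. Begin by replacing the supremum in the definition of $\cC$ by a measurable selector $v \colon \R^2 \to \R$, reducing to uniform $L^p$ bounds for the linearized operator $T f(x,y) := T_{v(x,y)} f(x,y)$, where
\begin{equation*}
T_v f(x,y) := \mathrm{p.v.} \int_\R f(x-t, y-t^2) e^{ivt^3} \frac{\ud t}{t}.
\end{equation*}
Then introduce a dyadic decomposition in $t$: $T_v = \sum_{k \in \Z} T_v^k$, where $T_v^k$ has kernel supported at $|t| \sim 2^k$. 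For each \emph{fixed} $v$, the operator $T_v^k$ is a Fourier multiplier with symbol
\begin{equation*}
m_v^k(\xi, \eta) = \int_\R e^{-i(\xi t + \eta t^2 - v t^3)} \chi(2^{-k} t) \frac{\ud t}{t},
\end{equation*}
whose behavior depends on the relative sizes of the three dimensionless quantities $a := \xi \cdot 2^k$, $b := \eta \cdot 2^{2k}$, $c := v \cdot 2^{3k}$. By the parabolic scaling symmetry $(x,y,t,v) \mapsto (\lambda x, \lambda^2 y, \lambda t, \lambda^{-3} v)$ it suffices to analyze the case $k=0$.

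Next, treat the regimes in which the cubic phase is harmless. When $|c| \lesssim |a| + |b|$, the phase $v t^3$ plays no essential role: $m_v^k$ agrees (up to bounded corrections) with the symbol of the Hilbert transform along the parabola, and uniform $L^p$ bounds for these pieces follow from Stein--Wainger theory combined with Littlewood--Paley theory. When $|c| \gg \max(|a|, |b|)$, the phase has no stationary point in the support of $\chi(2^{-k} \cdot)$, and repeated integration by parts yields rapid decay that permits a routine summation across scales.

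The critical case is when $|c|$ is comparable to $|b|$ (or, by symmetry, to $|a|$) and dominates the other variable. Here a non-degenerate stationary point $t^\star \sim \eta / v$ lies in the support of $\chi(2^{-k} \cdot)$, and the principal contribution exhibits the Carleson-like behavior driven by the selector $v(x,y)$. This is handled by a time-frequency tile decomposition: tiles in phase space are adapted simultaneously to the parabolic scaling (which couples the $x$- and $y$-frequency Littlewood--Paley scales) and to the cubic modulation parameter $v$. Tiles are organized into trees labeled by the values of $v(x,y)$, and one establishes size and energy estimates in the spirit of Lacey--Thiele and Lie in order to deduce the desired $L^p$ bounds on $T$.

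The main obstacle is this last step. The $TT^*$ method underlying the Pierce--Yung argument in $d \ge 3$ relies on cancellation from a genuinely multi-dimensional oscillatory integral, which degenerates in $d=2$; in particular, one cannot afford to integrate out the supremum in $v$ as in the higher-dimensional argument. The new input required is a combinatorial analysis of the tiles that respects both the parabolic curvature (encoded by $\eta \sim 2^{2k}$) and the cubic modulation (encoded by $v \sim 2^{-3k}$), and avoids logarithmic losses in the tree-size estimates. Designing the appropriate tile geometry and proving the associated size/energy bounds is where the bulk of the technical work will lie.
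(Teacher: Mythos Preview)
Your proposal correctly identifies the decomposition into regimes governed by the relative sizes of $\xi 2^k$, $\eta 2^{2k}$, $v 2^{3k}$, and the non-critical regimes are indeed handled along the lines you suggest. The genuine gap is in the critical regime, where you propose a time-frequency tile decomposition in the spirit of Lacey--Thiele and Lie. This route has been attempted before---by Roos, Ramos, and Becker, as discussed in \S1.2 of the paper---and all such attempts incur logarithmic losses that obstruct summation. The obstruction is structural: the Fourier multiplier for the Hilbert transform along the parabola is singular along an entire line in $\widehat{\R}^2$ rather than at a point, which breaks the standard tile combinatorics. You acknowledge this as ``where the bulk of the technical work will lie'' but offer no mechanism to overcome it; as things stand, no one knows how to make that mechanism work.

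The paper takes a completely different route for the critical piece. After the initial reductions, the key estimate (Theorem~\ref{thm:Cl}) is an $L^{p_\circ}$ decay bound for some fixed $p_\circ > 2$, \emph{not} an $L^2$ bound. This is obtained from a local smoothing inequality for a variable-coefficient Schr\"odinger-type propagator (Theorem~\ref{thm: general propagator}, imported from \cite{CGGHMW}): one applies stationary phase to the multiplier, and then verifies that the resulting phase $\Phi_{\pm}$ satisfies the Nikodym non-compression hypothesis (Definition~\ref{dfn: Nik non compress}), which guarantees an exponent $\beta(p_\circ) < 1/2$ strictly better than what Stein's universal oscillatory-integral theorem gives. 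Where the non-compression hypothesis degenerates (near $\xi = 0$), a separate $\ell^p$-decoupling argument (Proposition~\ref{prop: ls decoupling}) handles the exceptional piece. The $L^2$ decay then follows by interpolation with the trivial $L^1$ bound---an unusual reversal in which the $L^{p_\circ}$ estimate for $p_\circ>2$ is primary and the $L^2$ estimate is derived from it. None of this apparatus appears in your plan.
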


We remark that our methods apply to more general Calder\'on--Zygmund kernels than the Hilbert kernel, but we focus on this case for simplicity.



\subsection{Earlier attempts.} Several earlier attempts were made to solve the Pierce--Yung problem. Guo, Pierce, Roos and Yung \cite{GPRY2017} studied a weaker maximal operator than \eqref{240613e1_2}. More precisely, they proved that 
\begin{equation}\label{240613e1_5}
    \Norm{
    \sup_{v\in \R} \Norm{
    \mathrm{p.v.}\int_{\R}
    f(x-t, y-t^2) e^{iv t^3}\frac{\ud t}{t}
    }_{
    L^p_y(\R)
    }
    }_{L^p_x(\R)} \lesssim_{p} \|f\|_{L^p(\R^2)}
\end{equation}
holds for every $p\in (1, \infty)$. Here $L^p_y$ (respectively, $L^p_x$) means we take the $L^p$ norm in the $y$ (respectively, $x$) variable. To better compare \eqref{240613e1_5} with \eqref{240613e1_2}, we linearise the relevant maximal operators. 

Our result in Theorem~\ref{thm:main} says that 
\begin{equation*}
    \Norm{
    \mathrm{p.v.}\int_{\R}
    f(x-t, y-t^2)e^{iv(x, y)t^3}\frac{\ud t}{t}
    }_{L^p(\R^2)}\lesssim_p \|f\|_{L^p(\R^2)}
\end{equation*}
holds for every $p\in (1, \infty)$ and every measurable function $v(x, y)$. However,  \eqref{240613e1_5} is equivalent to saying that 
\begin{equation}\label{240613e1_7xx}
    \Norm{
    \mathrm{p.v.}\int_{\R}
    f(x-t, y-t^2)e^{iv(x)t^3}\frac{\ud t}{t}
    }_{L^p(\R^2)}\lesssim_p \|f\|_{L^p(\R^2)}
\end{equation}
holds for every $p\in (1, \infty)$ and every measurable function $v(x)$. We emphasise that the measurable function $v(x)$ in \eqref{240613e1_7xx} does \textit{not} depend on the $y$ variable. This allows the authors in \cite{GPRY2017} to take a Fourier transform in the $y$ variable and turn the analysis into a much simpler problem, which is essentially one-dimensional. 

More recently, Roos \cite{Roos2019}, Ramos \cite{Ramos2021} and Becker \cite{MR4687376, becker2023degree} tried to bound planar Pierce--Yung operators\footnote{They all consider a more general form of the Pierce--Yung operator defined above.} by viewing them as  modulation-invariant operators, in the spirit of Carleson's operator (see Carleson \cite{MR0199631}) and the bilinear Hilbert transform (see Lacey and Thiele \cite{MR1491450, MR1689336}). However, the Fourier multiplier for the Hilbert transform along the parabola is very rough, in the sense that the Fourier multiplier for a Calder\'on--Zygmund kernel has singularities only at the origin and at infinity, while that  for the Hilbert transform along the parabola is singular along an entire line in the frequency space $\R^2$. Consequently, the aforementioned results \cite{Roos2019, Ramos2021, MR4687376, becker2023degree} all have certain logarithmic losses.

\subsection{Why dimension matters.} Here we describe two key differences between the $d\ge 3$ and $d=2$ cases.

As mentioned in \S\ref{240613subsection1_1}, to prove their result \eqref{240613e1_1} for $d\ge 3$, Pierce and Yung \cite{PY2019} used a $TT^*$ method. In particular, they wrote the kernel of the resulting $TT^*$ operator as an oscillatory integral and then carried out sophisticated oscillatory integral estimates. 

To be more precise, let $\varphi_0 \colon \R\to \R$ be a  smooth function supported on the interval $[1, 2]$. 
Pierce and Yung \cite{PY2019} first linearized the maximal operator \eqref{240613e1_0} and aimed to show that there exists $\varepsilon>0$, depending only on the dimension $d$ and the kernel $K$, such that 
\begin{equation}\label{240613e1_7}
\norm{
\mathcal{C}_{d, j} f
}_{L^2(\R^d)} \lesssim_{d, K} 2^{-\varepsilon j} \|f\|_{L^2(\R^d)}
\end{equation}
holds for all $j\in \N$ and $d\ge 3$,
where
\begin{equation*}
\mathcal{C}_{d, j} f(x, y):= \int_{\R^{d-1}} f(x-t, y-|t|^2)
 e^{iv(x, y) |t|^3} 
 \varphi_0(2^{-j}|t|)
 K(t)\ud t
\end{equation*}
 for $v(x, y)$ a measurable function taking values in the interval $[1, 2]$. Once the $L^2$ estimate \eqref{240613e1_7} is proved, one may interpolate with trivial $L^1$ and $L^{\infty}$ estimates (without any decaying factor $2^{-\varepsilon j}$) to obtain the full range $p\in (1, \infty)$.

 The decaying factor $2^{-\varepsilon j}$ is natural to expect: by Minkowski's inequality, we trivially have 
 \begin{equation*}
 \Norm{
 \int_{\R^{d-1}} f(x-t, y-|t|^2)
 \varphi_0(2^{-j}|t|)
 K(t)\ud t
 }_{L^2(\R^d)}\lesssim_{d, K} \|f\|_{L^2(\R^d)},
 \end{equation*}
 uniformly in $j\in \N$. Adding the extra oscillatory term $e^{i v(x, y) |t|^3}$, it is very natural to expect a decay estimate of the form \eqref{240613e1_7}. However, proving the decay estimate \eqref{240613e1_7} is highly non-trivial as $v$ does not have any regularity and can be an arbitrary measurable function. Moreover, our estimates must also be uniform over all such choices of $v$.
 
 The strategy of Pierce and Yung \cite{PY2019} is to regularise the operator $\mathcal{C}_{d, j}$ by considering $\mathcal{C}_{d, j} \mathcal{C}_{d, j}^*$, the composition of the adjoint operator $\mathcal{C}_{d, j}^*$ with $\mathcal{C}_{d, j}$. The operator $\mathcal{C}_{d, j} \mathcal{C}_{d, j}^*$ may be better behaved than the operator $\mathcal{C}_{d, j}$ as it involves an integral in $\R^{d-1}\times \R^{d-1}$, rather than $\R^{d-1}$ itself. As the function $f$ depends on $d$ variables, when expressing $\mathcal{C}_{d, j} \mathcal{C}_{d, j}^*$ as a convolution operator, the kernel can be written as an integral in 
\begin{equation}\label{240614e1_9uuu}
(d-1)+ (d-1)-d= d-2
\end{equation}
variables. From \eqref{240614e1_9uuu}, one immediately sees how the constraint $d\ge 3$ in Pierce and Yung \cite{PY2019} appears: when $d=2$ the convolution kernel is not given by an integral formula, and is just a very rough measure. 

There is a second place where $d\ge 3$ plays a key role in \cite{PY2019}. This can be explained by the (vague) principle that integrating along the paraboloid in $\R^d$ for $d\ge 3$ is less `singular' than integrating along the parabola in $\R^2$. Indeed, the larger $d$ gets, the less `singular' it is to integrate along the paraboloid in $\R^d$. 

Following the above principle, Stein \cite{MR0420116} proved that the spherical maximal operator in $\R^d$ is bounded whenever $d\ge 3$. Some time later, the case $d=2$ was resolved in Bourgain's breakthrough work \cite{MR0874045}. Sogge \cite{MR1098614} and Mockenhaupt, Seeger and Sogge \cite{MR1168960, MR1173929} further developed and significantly expanded upon Bourgain's ideas, leading to the discovery of the local smoothing phenomenon for linear wave equations.

The difference between Stein's approach to the spherical maximal function \cite{MR0420116} and  Mockenhaupt, Seeger and Sogge's approach to the circular maximal function \cite{MR1173929} is very similar to the difference between the approach of Pierce and Yung \cite{PY2019} and the approach in the current paper. Neither Stein \cite{MR0420116} nor Pierce and Yung \cite{PY2019} required any local smoothing estimates in their work. By contrast, local smoothing estimates are crucial in Mockenhaupt, Seeger and Sogge's work \cite{MR1173929} and in our proof of Theorem~\ref{thm:main}. It is perhaps helpful to remark here that Mockenhaupt, Seeger and Sogge \cite{MR1173929} used local smoothing estimates for linear wave operators; in the current paper, what we encounter is a local smoothing estimate for a variable-coefficient Schr\"odinger operator (see Theorem~\ref{thm: general propagator} below). These local smoothing estimates follow from the recent work \cite{CGGHMW}, which is reviewed in \S\ref{240613subsection1_6}. At their heart lies an observation of Wisewell \cite{Wisewell2005} regarding differences between the geometry of Kakeya and Nikodym sets of curves.

\subsection{An $L^2$ bound and its surprising proof.}

In this subsection we discuss the proof strategy for Theorem~\ref{thm:main} and further remark on the variable-coefficient local smoothing estimates for Sch\"odinger operators that feature in the argument.

To prove Theorem~\ref{thm:main}, we still prove a decay estimate of the form \eqref{240613e1_7}. More precisely, we prove that there exists a universal constant $\varepsilon>0$ such that
\begin{equation}\label{240613e1_7zzz}
\norm{
\mathcal{C}_{j} f
}_{L^2(\R^2)} \lesssim 2^{-\varepsilon j} \|f\|_{L^2(\R^2)}
\end{equation}
holds for all $j\in \N$,
where
\begin{equation*}
\mathcal{C}_{j} f(x, y):= \int_{\R} f(x-t, y-t^2)
 e^{iv(x, y) t^3} 
 \varphi_0(2^{-j}|t|)
\frac{\ud t}{t}
\end{equation*}
 for $v(x, y)$ a measurable function taking values in the interval $[1, 2]$. What is surprising to us is that we do not know how to prove \eqref{240613e1_7zzz} directly, and instead we show (see Theorem~\ref{thm:Cl} below) that there exist $p_{\circ}>2$ and $\varepsilon_{\circ}> 0$ such that 
\begin{equation}\label{240613e1_7ttt}
\norm{
\mathcal{C}_{j} f
}_{L^{p_{\circ}}(\R^2)} \lesssim 2^{-\varepsilon_{\circ} j} \|f\|_{L^{p_{\circ}}(\R^2)},
\end{equation}
for all $j\in \N$. This, together with the trivial bound 
\begin{equation*}
\norm{
\mathcal{C}_{j} f
}_{L^1(\R^2)} \lesssim \|f\|_{L^1(\R^2)},
\end{equation*}
implies \eqref{240613e1_7zzz}. In the literature related to (singular) Radon transforms in harmonic analysis, we are not aware of another instance where an $L^2$ decay estimate is proven via interpolation; usually it is natural to work directly in $L^2$ because of, for instance, self-adjoint properties and Plancherel's theorem. 

The estimate \eqref{240613e1_7ttt} (or, more generally, 
Theorem~\ref{thm:Cl} below) follows in part from a theorem for variable coefficient Schr\"odinger-type propagators recently proved in \cite{CGGHMW}. The general result in \cite{CGGHMW} can be thought of as a local smoothing variant of the celebrated work of Bourgain~\cite{Bourgain1991} on oscillatory integral operators, and is partly inspired by important observations of Wisewell~\cite{Wisewell2005} concerning Kakeya and Nikodym sets of curves. To apply the theory from \cite{CGGHMW}, one must check that the relevant phase functions satisfy certain conditions in addition to those of H\"ormander~\cite{Hormander1973}, called the \textit{Nikodym non-compression hypothesis} (see Definition~\ref{dfn: Nik non compress} below). This hypothesis ensures certain families of curves associated to the operator $\mathcal{C}_j$ cannot compress into small sets (what Bourgain~\cite{Bourgain1991} called the Kakeya compression phenomenon): we refer the reader to \cite{CGGHMW} for further details. 

We emphasise that \eqref{240613e1_7ttt} is not an off-the-shelf application of the aforementioned local smoothing results from \cite{CGGHMW}. An array of classical oscillatory integral and $L^2$-based techniques are applied to reduce the problem to analysing the most singular part of the operator where the local smoothing result applies. These arguments are partly inspired by methods developed by Pramanik--Seeger \cite{PS2007} to prove local smoothing-type results in the presence of fold singularities. Furthermore, it transpires that, even after the above reductions, the Nikodym non-compression hypothesis is not globally valid, and an exceptional piece of the operator must be analysed via alternative means. However, this piece is amenable to (variable coefficient) decoupling theory, which is a standard tool in the study of local smoothing estimates \cite{Wolff2000, BD2015, MR4078231, BHS}.




\subsection{H\"ormander-type phase conditions and the Nikodym non-compression hypothesis}\label{240613subsection1_6} In this subsection we review a key result from \cite{CGGHMW} that we shall use later. 
We begin by reviewing some of the standard definitions in the oscillatory integral operator literature (see, for example, \cite{CGGHMW, MR4047925}), in the special case of translation-invariant phases. For $n\ge 2$ and $\rho>0$, let
\begin{equation*}
    \D_{\rho}^n:= \R^{n-1} \times \B_{\rho}^1 \times \B_{\rho}^{n-1},
\end{equation*}
where  $\B_{\rho}^d$ denotes the open ball in $\R^d$ of radius $\rho$ centred at the origin. 

\begin{definition} We say $\Phi \colon \D_{\rho}^n \to \R$ is a \textit{translation-invariant phase} if there exists some smooth function $\phi \colon \B_{\rho}^1 \times \B_{\rho}^{n-1} \to \R$ such that
\begin{equation}\label{240611e11}
    \Phi(\bx,w;\bxi) := \bx \cdot \bxi + \phi(w;\bxi) \qquad \textrm{for all $(\bx,w;\bxi) \in \D_{\rho}^n$.}
\end{equation}
Furthermore, if $\Phi$ is of the form \eqref{240611e11}, then we say that $\Phi$ is a \textit{translation-invariant, H\"ormander-type phase} if 
\begin{equation}\label{eq: Hormander}
    \det
    \partial_w \partial^2_{\bxi\bxi} \phi(w; \bxi)
    \neq 0 \quad \textrm{for all $(w;\bxi) \in \B_{\rho}^1 \times \B_{\rho}^2$}.
\end{equation}
Often we pair the phase with a smooth cutoff function $a \in C_c^{\infty}(\B_{\rho}^1 \times \B_{\rho}^{n-1})$, in which case we refer to the pairing $[\Phi; a]$ as a \textit{phase-amplitude pair}. 
\end{definition}

Given $[\Phi; a]$ a phase-amplitude pair and $\lambda \geq 1$, we consider the rescaled data
\begin{equation*}
    \Phi^{\lambda}(\bx,w;\bxi) := \lambda \Phi(\bx/\lambda, w/\lambda;\bxi) \quad \textrm{and} \quad a^{\lambda}(w;\bxi) := a(w/\lambda;\bxi).
\end{equation*}
The \textit{variable Schr\"odinger propagator} $U^{\lambda} = U^{\lambda}[\Phi; a]$ associated to $[\Phi; a]$ is the operator
\begin{equation}\label{eq: Schrod prop}
U^{\lambda}f(\bx,w) :=  \int_{\widehat{\R}^2} e^{i \Phi^{\lambda}(\bx,w; \bxi)}a^{\lambda}(w;\bxi) \hat{f}(\bxi)\ud \bxi,
\end{equation}
initially defined as acting on $f \in \cS(\R^2)$.

Any variable Schr\"odinger propagator associated to a H\"ormander-type phase satisfies a range of \textit{universal} estimates, given by Stein's oscillatory integral theorem \cite{Stein1986}. Our approach requires estimates for specific operators (related to the $\cC_j$ above) which go beyond this universal range. Examples due to Bourgain~\cite{Bourgain1991} and Wisewell~\cite{Wisewell2005} demonstrate Stein's theorem \cite{Stein1986} is sharp over the class of general translation-invariant, H\"ormander-type phases. Thus, to proceed we must impose additional conditions on our class of phase functions. 

\begin{definition}\label{dfn: Nik non compress} For $0 < \rho < 1$, let $\Phi \colon \D^2_{\rho}  \to \R$ be a real analytic translation-invariant phase of the form \eqref{240611e11}. For $\gamma \in (0, 1)$, we say that $\Phi$ satisfies  \textit{the Nikodym non-compression hypothesis with parameter $\gamma$} if :
\begin{enumerate}[a)]
\item For all $(w; \bxi) \in \B^1_{\rho} \times \B^2_{\rho}$, we have
$\det \partial^2_{\bxi\bxi}\phi(w; \bxi)=0$; 
    \item For all $(w; \bxi) \in \B^1_{\rho} \times \B^2_{\rho}$, we have
    \begin{equation*}
        \left| \det
        \begin{bmatrix}
            \partial_w \partial^2_{\xi_1 \xi_1} \phi(w; \bxi), & \partial_w \partial^2_{\xi_1 \xi_2} \phi(w; \bxi), & \partial_w \partial^2_{\xi_2 \xi_2} \phi(w; \bxi)\\
             \partial^2_w \partial^2_{\xi_1 \xi_1} \phi(w; \bxi), & \partial^2_w \partial^2_{\xi_1 \xi_2} \phi(w; \bxi), & \partial^2_w \partial^2_{\xi_2 \xi_2} \phi(w; \bxi)\\
              \partial^3_w \partial^2_{\xi_1 \xi_1} \phi(w; \bxi), & \partial^3_w \partial^2_{\xi_1 \xi_2} \phi(w; \bxi), & \partial^3_w \partial^2_{\xi_2 \xi_2} \phi(w; \bxi)
        \end{bmatrix}
        \right| \ge \gamma.
    \end{equation*}
\end{enumerate}
\end{definition}

Definition~\ref{dfn: Nik non compress} is a quantified variant of a special case of the Nikodym non-compression hypothesis from \cite{CGGHMW}. Under this hypothesis, we have the following (weakly) quantified local smoothing estimate.

\begin{theorem}[{\cite[Theorem 2.8 II)]{CGGHMW}}]\label{thm: general propagator}
There exist some $2 < p_{\circ} < 4$, $\beta(p_{\circ})< 1/2$ and large $M_{\circ} \geq 1$, $N_{\circ} \geq 1$ such that the following holds. Let $\rho > 0$, $0 < \gamma \leq 1$ and $W\ge 1$ and $[\Phi; a]$ be a phase-amplitude pair where $\Phi \colon \D^2_{\rho} \to \R$ is a real analytic translation-invariant phase which satisfies the Nikodym non-compression hypothesis with parameter $\gamma$. Further suppose that 
 \begin{equation}\label{eq: Hormander gamma}
     |\det\partial_w \partial^2_{\bxi\bxi} \phi(w; \bxi)
     |\ge \gamma \qquad \textrm{for all $(w; \bxi)\in \B^1_{\rho}\times \B^2_{\rho}$};
 \end{equation}
 the amplitude satisfies $\| a \|_{L^\infty(\B_\rho^1  \times \B_\rho^2)}=1$, and
 \begin{equation}\label{eq: phase/amp derivs}
     |\partial_w^{\alpha} \partial^{\bbeta}_{\bxi} \phi(w; \bxi)|\leq W \quad \textrm{and} \quad |\partial_w^{\alpha} \partial^{\bbeta}_{\bxi} a(w; \bxi)|\leq W \text{ for all } (w; \bxi)\in \B^1_{\rho} \times \B_\rho^2
 \end{equation}
 and all $\alpha \in \N_0$, $\bbeta = (\beta_1, \beta_2) \in \N_0^2$ satisfying $1 \leq \alpha + \beta_1 + \beta_2 \leq N_{\circ}$. 
Then
\begin{equation}\label{240611main_estimate}
    \|U^{\lambda}[\Phi; a] f\|_{L^{p_{\circ}}(\R^3)} \lesssim \lambda^{\beta(p_{\circ})}
    \big(W\gamma^{-1}\big)^{M_{\circ}}
    \|f\|_{L^{p_{\circ}}(\R^2)}
\end{equation}
for all $\lambda\ge 1$. 
\end{theorem}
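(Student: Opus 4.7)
The plan is to prove Theorem~\ref{thm: general propagator} by adapting the variable coefficient local smoothing framework for the wave equation to the present cone-type setting. The vanishing $\det \partial^2_{\bxi\bxi}\phi \equiv 0$ combined with the quantified H\"ormander bound \eqref{eq: Hormander gamma} means the characteristic surface $\{\partial_{\bxi} \phi(w;\bxi) : \bxi \in \B_\rho^2\}$ is cone-like for each fixed $w$, with a single null direction that rotates non-degenerately in $w$; the Nikodym non-compression hypothesis quantifies this rotation at the level of higher $w$-derivatives, producing a genuinely $3$-dimensional family of tubes in physical space.

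First I would perform a Littlewood--Paley decomposition of $\hat{f}$, together with a parabolic rescaling of angular sectors of the cone, reducing matters to the case where $\hat{f}$ is supported in a frequency annulus of bounded radius and the phase is normalised. After this reduction I would carry out a wave packet decomposition at the critical scale $\lambda^{-1/2}$, writing $U^{\lambda} f = \sum_T (U^{\lambda} f)_T$, where each $(U^{\lambda} f)_T$ is essentially supported in a curved tube $T$ of dimensions $\lambda^{-1} \times \lambda^{-1/2} \times 1$ whose long direction is determined by the null direction of $\phi$ at the associated frequency cap. The argument then splits via a Bourgain--Guth broad/narrow decomposition at each spatial ball of radius comparable to $1$. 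In the narrow case, where only wave packets from a single small cap contribute significantly, I would apply a variable coefficient cone decoupling inequality in the spirit of \cite{BHS} to bound $\|U^{\lambda} f\|_{L^{p_{\circ}}}$ by an $\ell^{p_{\circ}}$ sum over caps, each cap contribution being controlled by $L^2$ orthogonality. In the broad case, where wave packets from three well-separated caps simultaneously contribute, I would use a trilinear restriction-type bound which, by a Bourgain--Guth-style induction on scales, reduces to a multilinear Kakeya inequality for the associated tubes.

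The main obstacle, and the step at which the Nikodym non-compression hypothesis enters decisively, is controlling the Kakeya compression phenomenon highlighted by Bourgain~\cite{Bourgain1991} and Wisewell~\cite{Wisewell2005}: for general H\"ormander-type cone phases, the family of curved tubes can compress into an arbitrarily thin neighbourhood of a lower-dimensional set, destroying any useful multilinear Kakeya bound. Definition~\ref{dfn: Nik non compress}(b), together with the quantitative bound \eqref{eq: Hormander gamma}, precisely rules this out: it ensures that the curves tangent to the tubes genuinely twist with $w$ at a quantitative rate controlled by $\gamma$. From this one should establish a quantitative multilinear Kakeya bound with polynomial loss $(W\gamma^{-1})^{O(1)}$; propagating this through the broad/narrow recursion and the induction on scales would then produce the polynomial factor $(W\gamma^{-1})^{M_{\circ}}$ and the decay exponent $\beta(p_{\circ}) < 1/2$ in \eqref{240611main_estimate}.
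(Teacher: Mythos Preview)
The paper does not prove Theorem~\ref{thm: general propagator} from scratch. Immediately after the statement it says the result ``follows from \cite[Theorem 2.8 II)]{CGGHMW} by tracking the dependencies on the parameters $\gamma$ and $W$ in the proof'', and the only argument supplied is in Appendix~\ref{appendix: quantification}. That appendix does two things: (i) it rescales in $(\bx,w,\bxi)$ to reduce to $W=1$ (at the cost of covering the support by $O(W^C)$ pieces), and (ii) it identifies exactly where $\gamma$ enters the proof in \cite{CGGHMW} --- namely in applications of Stein's oscillatory integral theorem, Wisewell's universal geometric estimates, and, crucially, in a single \emph{sublevel set estimate} (their Proposition~5.3), for which a quantified version is stated. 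The appendix explicitly records that the Nikodym non-compression hypothesis is used \emph{only} to prove that sublevel set bound, and that neither H2) nor the Nikodym condition enters the iteration scheme.

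Your proposal instead outlines an entire proof architecture (Littlewood--Paley, wave packets, Bourgain--Guth broad/narrow, variable coefficient decoupling, trilinear restriction, multilinear Kakeya). This is not what the paper does, and even as a guess at the contents of \cite{CGGHMW} it misplaces the role of the Nikodym hypothesis: you feed it into a multilinear Kakeya bound, whereas the paper's appendix makes clear that in \cite{CGGHMW} it is consumed by a one-time sublevel set estimate of the form $|\{t : |1-\sum_j \mu_j g_j(t;y)|<\sigma\}|\lesssim (W\gamma^{-1})^M \sigma^{1/3}$. Your sketch also omits the rescaling reduction to $W=1$, which is how the paper actually obtains the $W$-dependence. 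So the proposal is neither the paper's argument nor a reliable reconstruction of the cited one; for the purposes of this paper, the correct ``proof'' is simply the citation plus the quantification tracking in Appendix~\ref{appendix: quantification}.
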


This result follows from \cite[Theorem 2.8 II)]{CGGHMW} by tracking the dependencies on the parameters $\gamma$ and $W$ in the proof: see Appendix~\ref{appendix: quantification} for further details.\footnote{In \cite[Theorem 2.8 II)]{CGGHMW} the operators $U^{\lambda}[\Phi; a]$ are smoothly localised in the $\bx$ variable to a $\lambda$-ball. However, the global variant in Theorem~\ref{thm: general propagator} easily follows by the pseudo-local properties of the operator (see, for instance, the proof of Proposition~\ref{prop: ls decoupling} at the end of \S\ref{subsec: decoupling arg} below).} 

Note that, by Plancherel's theorem (since our phases are translation-invariant),  \eqref{240611main_estimate} holds with 
\begin{equation}\label{240611estimate_2}
    p_{\circ}=2, \qquad \beta(p_{\circ})=1/2.
\end{equation}
Moreover, by Stein's oscillatory integral theorem \cite{Stein1986} (which generalises the classical Stein--Tomas restriction theorem), we see that 
\eqref{240611main_estimate} holds with 
\begin{equation}\label{240611estimate_3}
    p_{\circ}=4, \qquad \beta(p_{\circ})=1/2.
\end{equation}
Therefore, if we do not aim for $\beta(p_{\circ})< 1/2$ in Theorem~\ref{thm: general propagator}, then we can just interpolate \eqref{240611estimate_2} and \eqref{240611estimate_3}. Unfortunately, the resulting estimate \eqref{240611main_estimate} with $2\le p_{\circ}\le 4$ and  $\beta(p_{\circ})=1/2$ falls slightly short of our needs: it is crucial that we have $\beta(p_{\circ})< 1/2$ in Theorem~\ref{thm: general propagator} in order to sum up high frequencies (see Theorem~\ref{thm:Cl} below).

After proving Theorem~\ref{thm: general propagator}, we can interpolate it with \eqref{240611estimate_2} and \eqref{240611estimate_3} to see that Theorem~\ref{thm: general propagator} holds for all $2< p_{\circ}< 4$. This key estimate is used later in the proof of Propositions~\ref{prop:Lp smoothing low} and \ref{prop:Lp smoothing diag}.

\subsection{The full planar Pierce--Yung operator.} It may be possible to generalise the argument in the current paper to prove that 
\begin{equation*}
    \sup_{v\in \R} \anorm{\mathrm{p.v.}
    \int_{\R}
    f(x-t, y-t^2)e^{iv t^m}\frac{\ud t}{t}
    }
\end{equation*}
is bounded on $L^p(\R^2)$ for every $p\in (1, \infty)$ and every integer $m\ge 4$. However, it remains an interesting question whether the full Pierce--Yung operator 
\begin{equation*}
    \sup_{v_3, \dots, v_m \in \R} 
    \anorm{
    \mathrm{p.v.}\int_{\R}
    f(x-t, 
    y-t^2
    )
    e^{
    i(v_3 t^3+\dots+ v_m t^m)
    }
    \frac{\ud t}{t}
    }
\end{equation*}
is bounded on $L^p(\R^2)$ for every integer $m\ge 4$ and $p\in (1, \infty)$. 




\subsection*{Notational conventions} We let $\widehat{\R}$ denote the \textit{frequency domain}, which is the Pontryagin dual group of $\R$  understood here as simply a copy of $\R$. Given $f \in L^1(\R^d)$ and $g \in L^1(\widehat{\R}^d)$ we define the Fourier transform and inverse Fourier transform by
\begin{equation*}
    \hat{f}(\bxi) := \int_{\R^d} e^{-i \inn{\bx}{\bxi}} f(\bx)\,\ud \bx \quad \textrm{and} \quad \check{g}(x) := \frac{1}{(2\pi)^d}\int_{\widehat{\R}^d} e^{i \inn{\bx}{\bxi}} g(\bxi)\,\ud \bxi,
\end{equation*}
respectively. Given $a, b \in \R$ we write $a \wedge b := \min \{a,b\}$ and $a \vee b := \max \{a,b\}$.




\subsection*{Acknowledgements}

D.B. is supported by the AEI grants RYC2020-029151-I and PID2022-140977NA-I00. S.G. is partly supported by the Nankai Zhide Foundation, and partly supported by NSF-2044828.




\section{Decomposition of the kernel}\label{sec:kernel dec}

We begin with some elementary preliminaries for the proof of Theorem~\ref{thm:main}.



\subsection{Decomposition relative to the oscillatory factor} Let $\beta_0 \in C^\infty_c(\R)$ be real, even and radially non-increasing, supported in $[-2,2]$ and satisfy $\beta_0(s)=1$ for $s \in [-1,1]$. Define $\beta(s):= \beta_0(s) - \beta_0(2s)$ so that
\begin{equation*}
    \sum_{j \in \Z} \beta(2^{-j} s) = 1 \quad \textrm{and} \quad \beta_0(s) = \sum_{j \leq 0} \beta(2^{-j} s) \quad \textrm{for all $s \in \R \setminus \{0\}$.}
\end{equation*}
For any $m \in \Z$, define
\begin{align*}
    H^v_{\leq m} f(x,y)& :=  \mathrm{p.v.}  \int_\R f(x-t,y-t^2) e^{i v t^3} \beta_0(2^{-m} t) \frac{\ud t}{t}, \\
    H^v_{m} f(x,y) & := \int_\R f(x-t,y-t^2) e^{i v t^3} \beta(2^{-m} t) \frac{\ud t}{t}.
\end{align*}
We then decompose
\begin{equation}\label{eq:C dec}
    \mathcal{C}f(x,y) \leq  \mathcal{C}_{\leq 0} f(x,y)+  \sum_{\ell > 0} \mathcal{C}_\ell f(x,y)
\end{equation}
where
\begin{equation*}
    \mathcal{C}_{\leq 0} f (x,y) := \sup_{j \in \Z} \sup_{v \in V_j} |H^v_{\leq -j} f(x,y)|,\qquad
    \mathcal{C}_{\ell} f (x,y) := \sup_{j \in \Z} \sup_{v \in V_j} |H^v_{-j+\ell} f(x,y)|
\end{equation*}
for $V_j := [2^{3j}, 2^{3(j+1)}]$.




\subsection{Non-oscillatory analysis} The term $\mathcal{C}_{\leq 0}$ has no oscillation and can be controlled by the maximal function and the Hilbert transform along the parabola. Recall that these are the operators
\begin{align*}
    M_{\mathrm{par}} f(x,y) & := \sup_{r >0} \frac{1}{2r} \int_{-r}^r |f(x-t,y-t^2)| \ud t, \\
    H_{\mathrm{par}} f(x,y) & := \mathrm{p.v.}  \int_{\R} f(x-t,y-t^2)  \frac{\ud t}{t}
\end{align*}
which are both well-known to be bounded on $L^p(\R^2)$ for $1 < p < \infty$ (see, for instance, \cite[Chapter 8, \S7]{Duoandikoetxea2001}). Moreover, it was shown in \cite[Theorem 5.1]{PY2019}  that $L^p$-boundedness also holds for the maximal truncations
\begin{equation}\label{eq: truncated HT}
    \sup_{\varepsilon>0} |H_{
    \mathrm{par}, \varepsilon
    } f(x,y)|, \qquad \text{ where } \quad  H_{
    \mathrm{par}, \varepsilon
    } f(x,y) := \int_{|y| \geq \varepsilon} f(x-t,y-t^2)  \frac{\ud t}{t}.
\end{equation}
In view of these known results, we quickly deduce the following lemma.

\begin{lemma}\label{lem:C0}
    For every $ 1 < p < \infty$, we have
    \begin{equation*}
        \| \mathcal{C}_{\leq 0} f \|_{L^p(\R^2)} \lesssim_p \| f \|_{L^p(\R^2)}.
    \end{equation*}
\end{lemma}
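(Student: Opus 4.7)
The plan is to establish the pointwise bound
\begin{equation*}
\cC_{\leq 0} f(x,y) \lesssim \sup_{\varepsilon > 0} |H_{\mathrm{par}, \varepsilon} f(x,y)| + M_{\mathrm{par}} f(x,y) \quad \text{for all } (x,y) \in \R^2,
\end{equation*}
after which the lemma follows immediately from the $L^p$-boundedness of the two operators on the right, recalled above. The underlying intuition is that on the support $|t| \lesssim 2^{-j}$ of $\beta_0(2^j t)$, the phase $vt^3$ with $v \in V_j = [2^{3j}, 2^{3(j+1)}]$ satisfies $|vt^3| \lesssim 1$, so the oscillatory factor $e^{ivt^3}$ is effectively trivial and should not interfere with standard non-oscillatory estimates.

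To implement this, I would fix $j \in \Z$ and $v \in V_j$ and split $e^{ivt^3} = 1 + (e^{ivt^3} - 1)$ in the definition of $H^v_{\leq -j} f$. This decomposes $H^v_{\leq -j} f = I_j f + II^v_j f$, where $I_j$ carries the kernel $\beta_0(2^j t)/t$ (with principal value at the origin) and $II^v_j$ carries the kernel $(e^{ivt^3} - 1)\beta_0(2^j t)/t$. The latter is non-singular at $t = 0$ because $e^{ivt^3} - 1 = O(vt^3)$ there.

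For the error term $II^v_j$, I would use $|e^{ivt^3} - 1| \leq |v||t|^3$ together with the bound $|v| \leq 2^{3(j+1)}$ valid on $V_j$ and the support condition on $\beta_0(2^j t)$: the pointwise kernel estimate is then $2^j \mathbf{1}_{|t| \leq 2^{-j+1}}$, uniformly in $j$ and $v \in V_j$. Averaging $|f|$ against this produces a bound by $M_{\mathrm{par}} f(x,y)$. For the main term $I_j$, I would exploit that $\beta_0 \equiv 1$ on $[-1,1]$ to split the integral at radius $2^{-j}$: the inner piece over $|t| \leq 2^{-j}$ equals $H_{\mathrm{par}} f(x,y) - H_{\mathrm{par}, 2^{-j}} f(x,y)$ and so is dominated by $2\sup_{\varepsilon}|H_{\mathrm{par}, \varepsilon} f(x,y)|$, while the outer piece over $2^{-j} \leq |t| \leq 2^{-j+1}$ is again dominated by $M_{\mathrm{par}} f(x,y)$ by the same averaging argument. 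Combining these bounds and taking the supremum over $j \in \Z$ and $v \in V_j$ produces the desired pointwise estimate.

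There is no real obstacle here: the argument is elementary and amounts to observing that $\cC_{\leq 0}$ lives in a scale regime where the cubic oscillation is tame, so everything reduces to the known boundedness of $M_{\mathrm{par}}$ and of the truncated parabolic Hilbert transform from \cite[Theorem 5.1]{PY2019}. The only minor bookkeeping concerns the principal value near $t = 0$ in $I_j$, and this is handled automatically since $\beta_0(2^j t) = 1$ there, allowing the inner piece of $I_j$ to be identified directly with a difference of truncated Hilbert transforms along the parabola.
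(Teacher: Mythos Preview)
Your proposal is correct and follows essentially the same approach as the paper: both split $H^v_{\leq -j}$ via $e^{ivt^3} = 1 + (e^{ivt^3}-1)$, bound the error piece by $M_{\mathrm{par}}$ using $|e^{ivt^3}-1| \leq |v||t|^3 \lesssim 1$ on the support, and handle the non-oscillatory piece $H^0_{\leq -j}$ by identifying it with a truncated parabolic Hilbert transform plus an annular average controlled by $M_{\mathrm{par}}$. The only cosmetic difference is that the paper splits at radius $2^{-j+1}$ and keeps $|H_{\mathrm{par}}f|$ and $\sup_\varepsilon |H_{\mathrm{par},\varepsilon}f|$ as separate terms, whereas you split at $2^{-j}$ and absorb both into $2\sup_\varepsilon |H_{\mathrm{par},\varepsilon}f|$.
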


\begin{proof} Write
    \begin{equation*}
        H_{\leq - j}^v  = \big( H_{\leq - j}^v  - H_{\leq -j}^0 \big) + H_{\leq -j}^0 =: M_{ - j}^v + H_{\leq - j}^0. 
    \end{equation*}
    Then     \begin{align*}
        \sup_{v \in V_j} |M_{-j}^v f(x,y) | & = \sup_{v\in V_j}
        \Big| \int_{\R} f(x-t, y-t^2) \big( e^{i v t^3 } -1 \big) \beta_0(2^j t) \frac{\ud t}{t} \Big| \\
        & \lesssim 2^j \int_{\R} |f(x-t,y-t^2)| \beta_0(2^j t) \ud t \\
        & \lesssim \frac{1}{2^{-j+2}} \int_{|t| \leq 2^{-j+1}} |f(x-t,y-t^2)| \ud t.
    \end{align*}
    Thus,
    \begin{equation*}
        \sup_{j \in \Z} \sup_{v \in V_j} |M_{-j}^v f(x,y) | \lesssim M_{\mathrm{par}} f(x,y) 
    \end{equation*}
    and the $L^p$-boundedness for this term follows. 
    
    For the second term, since $|t| \leq 2^{-j+1}$ for all $t \in \supp \beta_0(2^j\,\cdot\,)$, we note that
    \begin{equation*}
    \begin{split}
        H^0_{\leq - j}f(x,y) & = H_{\mathrm{par}}f(x,y) - H_{
        \mathrm{par}, 
        2^{-j+1}}f(x,y)\\
        & + \int_{|t| \leq 2^{-j+1}} f(x-t,y-t^2) \big( \beta_0(2^jt) -1 \big) \frac{\ud t}{t},
    \end{split}
    \end{equation*}
    where $H_{
    \mathrm{par}, 
    2^{-j+1}}f$ is a truncation as defined in \eqref{eq: truncated HT}. Since $\beta_0(s)=1$ for $s \in [-1,1]$, we have 
    \begin{align*}
        & \Big|\int_{|t| \leq 2^{-j+1}} f(x-t,y-t^2) \big( \beta_0(2^jt) -1 \big) \frac{\ud t}{t} \Big| \\
        & \lesssim \frac{1}{2^{-j}} \int_{2^{-j} \leq |t| \leq 2^{-j+1}} |f(x-t,y-t^2)| \ud t \lesssim M_{\mathrm{par}} f(x,y).
    \end{align*}
    Therefore,
    \begin{align*}
        \sup_{j \in \Z} \sup_{v \in V_j} |H_{\leq -j}^0 f(x,y)| \leq |H_{\mathrm{par}} f(x,y)| + \sup_{\varepsilon>0} |H_\varepsilon f(x,y)| + M_{\mathrm{par}} f(x,y)
    \end{align*}
    and the $L^p$-boundedness follows.
\end{proof}

Regarding the terms $\mathcal{C}_\ell$, we have the following uniform bound.

\begin{lemma}\label{lem:uniform Cl}
    For all $1 < p \leq  \infty$, we have
    \begin{equation*}
        \| \mathcal{C}_\ell f \|_{L^p(\R^2)} \lesssim_p \| f \|_{L^p(\R^2)}.    
    \end{equation*}
\end{lemma}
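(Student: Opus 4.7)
The plan is to observe that, for this \emph{uniform} (as opposed to decaying) bound on $\mathcal{C}_\ell$, one can afford to discard both the oscillatory factor $e^{ivt^3}$ and the principal-value cancellation of the $1/t$ kernel: a crude triangle inequality already produces a pointwise domination by the parabolic maximal function $M_{\mathrm{par}}$.

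More precisely, since $\beta = \beta_0 - \beta_0(2\,\cdot\,)$ is supported where $1/2 \leq |s| \leq 2$, the bump $\beta(2^{-m}\,\cdot\,)/t$ is supported in $\{|t| \sim 2^m\}$ and has size $O(2^{-m})$ there. Consequently, for every $v \in \R$ and every $m \in \Z$,
\begin{equation*}
|H_m^v f(x,y)| \;\lesssim\; 2^{-m}\!\int_{|t|\sim 2^m} |f(x-t,y-t^2)| \,\ud t \;\lesssim\; M_{\mathrm{par}} f(x,y),
\end{equation*}
with an implicit constant independent of $v$ and $m$. Taking the supremum over $v \in V_j$ and $j \in \Z$ (noting that $m = -j + \ell$ traverses all of $\Z$ as $j$ does, for each fixed $\ell$) yields the pointwise bound $\mathcal{C}_\ell f(x,y) \lesssim M_{\mathrm{par}} f(x,y)$, with implicit constant independent of $\ell$. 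The lemma then follows from the standard $L^p$-boundedness of $M_{\mathrm{par}}$ for $1 < p \leq \infty$, as cited in the paragraph preceding Lemma~\ref{lem:C0}.

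There is no real obstacle here: the estimate is soft and entirely positive, precisely because no cancellation is being exploited. The role of this lemma is presumably to furnish a $p$-independent baseline against which the quantitative decay estimate \eqref{240613e1_7ttt} can be interpolated. Combining a uniform $L^p$ bound for $\mathcal{C}_\ell$ with a decaying $L^{p_\circ}$ bound yields an $L^q$ decay (with a geometric factor $2^{-\varepsilon \ell}$) at intermediate exponents $q$, which is what is needed to sum the decomposition \eqref{eq:C dec} over $\ell > 0$ and ultimately recover Theorem~\ref{thm:main}.
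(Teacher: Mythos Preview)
Your proof is correct and essentially identical to the paper's: both discard the oscillation and the $1/t$ cancellation, bound $|H_{-j+\ell}^v f|$ pointwise by $M_{\mathrm{par}} f$ using the support and size of $\beta(2^{j-\ell}\,\cdot\,)/t$, and invoke the $L^p$-boundedness of $M_{\mathrm{par}}$. Your remark about the lemma's role as an interpolation endpoint is also exactly how the paper uses it.
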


\begin{proof} We trivially bound
    \begin{align*}
        |H_{-j+\ell}^v f(x,y)| & \leq \Big| \int_{\R} |f(x-t,y-t^2)| \beta(2^{j-\ell}t) \frac{\ud t }{|t|}\Big|  \\
        & \lesssim \frac{1}{2^{-j+\ell + 2}} \int_{|t| \leq 2^{- j + \ell + 1}} |f(x-t,y-t^2)| \ud t \lesssim M_{\mathrm{par}} f(x,y)
    \end{align*}
uniformly in $v$ and $j$, so the result follows from the $L^p(\R^2)$-boundedness of $M_{\mathrm{par}}$.
\end{proof}




\subsection{Our goal} Lemma~\ref{lem:uniform Cl} falls short of proving Theorem~\ref{thm:main} since the estimates are not summable in $\ell$. The key ingredient in the proof of Theorem~\ref{thm:main} is the following improvement.

\begin{theorem}\label{thm:Cl}
    There exists some $p_{\circ}>2$ and $\varepsilon_{\circ}>0$ such that
        \begin{equation}\label{eq:Cl main}
        \| \mathcal{C}_\ell f \|_{L^{p_{\circ}}(\R^2)} \lesssim 2^{-\ell \varepsilon_{\circ}} \| f \|_{L^{p_{\circ}}(\R^2)}
    \end{equation}
    holds for all $\ell\ge 1$.
\end{theorem}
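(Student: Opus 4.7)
The plan is to reduce $\mathcal{C}_\ell$ to a variable-coefficient oscillatory integral operator and invoke the local smoothing estimate of Theorem~\ref{thm: general propagator}. First, I would linearise both suprema by choosing measurable selectors $j \colon \R^2 \to \Z$ and $v \colon \R^2 \to \R$ with $v(x,y) \in V_{j(x,y)}$ saturating (up to an additive error) $\mathcal{C}_\ell f(x,y)$. On the set $\{j(x,y) = k\}$, the parabolic rescaling $(x,y) \mapsto (2^{-k+\ell}\tilde x, 2^{2(-k+\ell)}\tilde y)$ together with $t = 2^{-k+\ell}s$ transforms $H^v_{-k+\ell}$ into the unit-scale operator
$$T^u g(\tilde x, \tilde y) := \int g(\tilde x - s, \tilde y - s^2) e^{iu s^3} \beta(s) \frac{\ud s}{s}, \qquad u = v \cdot 2^{3(-k+\ell)} \in [2^{3\ell}, 2^{3\ell+3}].$$
Since $T^u$ is essentially Fourier-localised to a unit parabolic shell in the rescaled variables, a parabolic Littlewood--Paley decomposition of $f$ together with an almost-orthogonality argument should reduce matters to a uniform $L^{p_\circ}$ bound on $T^u g$ over measurable $u \colon \R^2 \to [2^{3\ell}, 2^{3\ell+3}]$.

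Setting $\lambda := 2^{3\ell}$ and $u = \lambda w$ with $w \in [1,8]$, Fourier inversion expresses $T^u f$ as
$$T^u f(x,y) = \int_{\widehat\R^2} \widehat f(\bxi)\,e^{i\langle(x,y),\bxi\rangle}\, m_\lambda(\bxi; w)\,\ud \bxi, \qquad m_\lambda(\bxi; w) := \int e^{-i(s\xi + s^2\eta - \lambda w s^3)} \beta(s) \frac{\ud s}{s}.$$
The $s$-critical equation $3\lambda w s^2 - 2s\eta - \xi = 0$ has up to two roots and generates a fold on the discriminant $\{4\eta^2 + 12\lambda w \xi = 0\}$. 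I would dyadically decompose frequency space into: (i) a non-stationary regime, where integration by parts yields $O(\lambda^{-N})$ decay and contributes negligibly; (ii) a generic stationary regime away from the fold, on which stationary phase writes $m_\lambda$ as $\lambda^{-1/2}$ times an oscillatory symbol with translation-invariant phase of H\"ormander type; and (iii) a fold neighbourhood. On (ii), after a suitable rescaling the operator fits the framework of Theorem~\ref{thm: general propagator}, and I would verify the Nikodym non-compression hypothesis of Definition~\ref{dfn: Nik non compress}; combined with the amplitude gain $\lambda^{-1/2}$, this produces a factor $\lambda^{\beta(p_\circ)-1/2}$, which is $2^{-\varepsilon_\circ \ell}$ since $\beta(p_\circ)<1/2$.

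The main obstacles will be the fold neighbourhood (iii) and a subregion of (ii) where the Nikodym non-compression hypothesis degenerates. For the fold, I would follow the Pramanik--Seeger strategy of \cite{PS2007}: a further dyadic decomposition in the distance to the discriminant restores a rescaled H\"ormander condition on each dyadic piece, which can then be recombined using $L^2$ orthogonality and additional rescalings. For the NNC-bad region, I would invoke a variable-coefficient $\ell^2$ decoupling estimate in the spirit of \cite{Wolff2000, BD2015, MR4078231, BHS}, which yields the required $L^{p_\circ}$ gain through decoupling rather than through Definition~\ref{dfn: Nik non compress}. Combining the three regimes should then produce the estimate of Theorem~\ref{thm:Cl} for some $p_\circ \in (2,4)$ and $\varepsilon_\circ > 0$ inherited from Theorem~\ref{thm: general propagator}.
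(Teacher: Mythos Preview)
Your overall strategy matches the paper's: parabolic rescaling, Littlewood--Paley decomposition, a non-stationary/stationary/fold trichotomy, Theorem~\ref{thm: general propagator} on the main stationary piece, a Pramanik--Seeger-style decomposition near the fold, and decoupling where the Nikodym non-compression hypothesis fails. There are, however, two genuine gaps.

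First, you linearise the supremum over $v$ (equivalently over $w\in[1,8]$) by a measurable selector and then propose to apply Theorem~\ref{thm: general propagator}. But that theorem bounds $\|U^{\lambda} f\|_{L^{p_{\circ}}(\R^3)}$, i.e.\ an $L^{p_{\circ}}$ norm with \emph{integration} in $w$, not evaluation at an arbitrary measurable $w(x,y)$; the latter is precisely the maximal estimate you are trying to prove, and it does not follow from the former. The paper closes this gap by a Sobolev-embedding step in $w$: for each Littlewood--Paley piece one writes
\[
\sup_{1\le w\le 8}|\cH^w_{3\ell}g|^{p_{\circ}} \le |\cH^1_{3\ell}g|^{p_{\circ}} + p_{\circ} \Big(\int_1^8 |\cH^w_{3\ell}g|^{p_{\circ}}\,\ud w\Big)^{1-1/p_{\circ}}\Big(\int_1^8 |\partial_w\cH^w_{3\ell}g|^{p_{\circ}}\,\ud w\Big)^{1/p_{\circ}},
\]
observes that $\partial_w\cH^w_{3\ell}=3i\,2^{3\ell}\,\widetilde\cH^w_{3\ell}$ with $\widetilde\cH^w_{3\ell}$ of the same form, and then applies the $L^{p_{\circ}}_w$-averaged local smoothing bound (Theorem~\ref{thm:key}) with its $2^{-3\ell/p_{\circ}-3\ell\varepsilon_{\circ}}$ decay. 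The $2^{3\ell}$ loss from the derivative is absorbed by the $2^{-3\ell/p_{\circ}}$ factors, leaving the net gain $2^{-3\ell\varepsilon_{\circ}}$. Without this step, or something equivalent, your argument does not connect to Theorem~\ref{thm: general propagator}.

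Second, for the region where the Nikodym non-compression hypothesis degenerates you propose variable-coefficient $\ell^2$ decoupling. This is not available here: the mixed Hessian $\partial_w\partial^2_{\bxi\bxi}\phi_{\pm}$ has negative determinant (see \eqref{eq: H2 formula}), so the phase is \emph{hyperbolic} and $\ell^2$ decoupling in the Bourgain--Demeter sense does not apply. The paper instead exploits the specific structure of the bad region, namely $\{|\xi|\le\sigma\}$ for small $\sigma$, and runs a two-step argument: a one-dimensional small-cap decoupling (essentially planar $L^4$ restriction) along the curve $\eta\mapsto\partial_w\phi_{\pm}(w;0,\eta)$, followed by variable-coefficient $\ell^p$ decoupling from \cite{BD2017}. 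The resulting gain $\sigma^{1/2-1/p_{\circ}}$ substitutes for the missing Nikodym nondegeneracy.
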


Once this is established, interpolation with Lemma~\ref{lem:uniform Cl} immediately upgrades \eqref{eq:Cl main} to all $1 < p < \infty$. Theorem~\ref{thm:main} is then an immediate consequence of this upgrade and Lemma~\ref{lem:C0}, by virtue of the decomposition \eqref{eq:C dec} and the triangle inequality.




\section{Reduction to key estimates}




\subsection{Frequency decomposition}\label{subsec:freq loc} Let $\beta \in C_c^{\infty}(\R)$ be as in \S\ref{sec:kernel dec}. For any $m \in \Z$ and $i=1,2$, let $P_m^{(i)}$ denote the $i$th coordinate Littlewood--Paley projection associated with $\beta (2^{-m} \,\cdot\,)$: that is, for $f \in \cS(\R^2)$, we define
\begin{equation*}
    (P_m^{(1)} f)\;\widehat{}\;(\xi, \eta)  := \beta(2^{-m} \xi) \widehat{f}(\xi, \eta),   \qquad
    (P_m^{(2)} f)\;\widehat{}\;(\xi, \eta)  := \beta(2^{-m} \eta) \widehat{f}(\xi, \eta).     
\end{equation*}
Note that 
\begin{equation*}
    f= \sum_{m_1, m_2 \in \Z}  P_{m_1}^{(1)} P_{m_2}^{(2)} f.
\end{equation*}
It is also convenient to define the operators
\begin{equation*}
 P_{\leq m}^{(i)} := \sum_{n \leq m} P_{n}^{(i)},   
\end{equation*}
 which, for $i = 1$, $2$, are associated to the multipliers $\beta_0(2^{-m} \xi)$ and $\beta_0(2^{-m}\eta)$, respectively. We shall also make use of Littlewood--Paley projections $\widetilde{P}_m^{(i)}$ satisfying
 \begin{equation*}
P_m^{(i)}=P_m^{(i)}\widetilde{P}_m^{(i)}
 \end{equation*}
 and associated to a cut-off function $\widetilde{\beta} \in C_c^\infty(\R)$ supported in $[-4,-1/4]\cup [1/4,4]$ and that equals $1$ on $\supp \beta \subseteq [-2,-1/2]\cup[1/2,2]$. 

For $\ell \in \N$, $j \in \Z$ and $v \in V_j$, we decompose
\begin{equation*}
    H_{-j+\ell}^v f = \sum_{k_1 \in \Z} \sum_{k_2 \in \Z} H_{-j+\ell}^v P_{k_1+2\ell+j}^{(1)} P_{k_2+\ell+2j}^{(2)}f.
\end{equation*}
Moreover, we group the low frequencies and write, for $C_1, C_2 \geq 10^6$ satisfying $C_1 = 2 C_2 + 12$ fixed (absolute) constants,
\begin{align}
    H_{-j+\ell}^v f  &= H_{-j+\ell}^v P_{\leq -C_1+2\ell+j}^{(1)} P_{\leq -C_2+\ell+2j}^{(2)}f \notag \\
    & \quad + \sum_{k_1 >-C_1} H_{-j+\ell}^v P_{k_1+2\ell+j}^{(1)} P_{\leq -C_2+\ell+2j}^{(2)}f \notag \\
    & \quad 
    +\sum_{k_2 >-C_2} H_{-j+\ell}^v P_{\leq -C_1+2\ell+j}^{(1)} P_{k_2+\ell+2j}^{(2)}f \notag \\
    & \quad 
    +\sum_{k_1 > -C_1} \sum_{k_2 >-C_2} H_{-j+\ell}^v P_{k_1+2\ell+j}^{(1)} P_{k_2+\ell+2j}^{(2)}f. \label{eq:freq dec}
\end{align}
Note that the Fourier multiplier associated to $H_{-j+\ell}^v P_{k_1+2\ell+j}^{(1)} P_{k_2+\ell+j}^{(2)}$ is given by
\begin{equation}\label{eq:mult}
    \beta(2^{-k_1-2\ell-j} \xi ) \beta(2^{-k_2-\ell-2j} \eta) \int_\R e^{-i (t \xi + t^2 \eta - t^3 v)} \beta(2^{-\ell+j} t) \frac{\ud t }{t},
\end{equation}
and similar formulas hold for the other frequency projections. 

The low frequency part of the operator is easily dealt with. 
\begin{lemma}[Low frequency part]\label{lem:low}
        For $2 \leq p \leq \infty$, the inequality  
        \begin{equation*}
            \| \sup_{j \in \Z} \sup_{v \in V_j} |H^v_{-j+\ell} P_{\leq -C_1+2\ell+j}^{(1)} P_{\leq -C_2 +\ell + 2j}^{(2)} f| \|_{L^p(\R^2)} \lesssim_{p, N, C_1, C_2} 2^{- \ell N} \| f \|_{L^p(\R^2)} 
        \end{equation*}
        holds for all $N \in \N_0$ and all $\ell \in \N$.
\end{lemma}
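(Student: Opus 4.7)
My plan is to establish a pointwise domination of the supremum on the left-hand side by $2^{-\ell N}$ times the two-dimensional strong maximal function $M_{\mathrm{str}}$. Since $M_{\mathrm{str}}$ is bounded on $L^p(\R^2)$ for every $1<p\le\infty$, this will yield the lemma (indeed, for every $1<p\le\infty$). The argument is a routine non-stationary phase computation, with a little bookkeeping to handle the supremum over $(j,v)$.

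Fix $j\in\Z$ and $v\in V_j$. By \eqref{eq:mult}, the operator $H^v_{-j+\ell}P^{(1)}_{\leq -C_1+2\ell+j}P^{(2)}_{\leq -C_2+\ell+2j}$ is a Fourier multiplier with symbol $m_{j,v}(\xi,\eta)\beta_0(2^{C_1-2\ell-j}\xi)\beta_0(2^{C_2-\ell-2j}\eta)$, where
\begin{equation*}
m_{j,v}(\xi,\eta) := \int_\R e^{-i(t\xi+t^2\eta-t^3 v)}\beta(2^{j-\ell}t)\,\frac{\ud t}{t}.
\end{equation*}
Substituting $t=2^{\ell-j}s$ and setting $\tilde\xi:=2^{\ell-j}\xi$, $\tilde\eta:=2^{2(\ell-j)}\eta$ and $\tilde v:=2^{3(\ell-j)}v$ transforms the phase into $\Psi(s):=s\tilde\xi+s^2\tilde\eta-s^3\tilde v$. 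For $v\in V_j$ and on the support of the low-pass projections one has $|\tilde v|\sim 2^{3\ell}$, $|\tilde\xi|\lesssim 2^{-C_1+3\ell}$ and $|\tilde\eta|\lesssim 2^{-C_2+3\ell}$. Since $C_1,C_2\ge 10^6$, the cubic term dominates:
\begin{equation*}
|\Psi'(s)|=|3s^2\tilde v+2s\tilde\eta+\tilde\xi|\sim 2^{3\ell}, \qquad s\in\supp\beta.
\end{equation*}
Repeated integration by parts in $s$ then gives, for every $\alpha,\beta,N\in\N_0$,
\begin{equation*}
|\partial_\xi^\alpha\partial_\eta^\beta m_{j,v}(\xi,\eta)|\lesssim_{N,\alpha,\beta} 2^{(\ell-j)(\alpha+2\beta)}\,2^{-3\ell N},
\end{equation*}
uniformly in $v\in V_j$.

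Let $K_{j,v}$ be the convolution kernel of the operator, and set $a:=2^{-C_1+2\ell+j}$, $b:=2^{-C_2+\ell+2j}$. Rescaling the frequency cutoffs to unit scale and integrating by parts in the spatial variables, the derivative bounds above yield, for every fixed $M\ge 3$,
\begin{equation*}
|K_{j,v}(x,y)|\lesssim_{M,N'} 2^{-\ell N'}\,ab(1+a|x|)^{-M}(1+b|y|)^{-M}, \qquad N'\in\N_0.
\end{equation*}
The $2^{3\ell(\alpha+\beta)}$ factors introduced by the rescaling are absorbed into the IBP decay by choosing $N$ sufficiently large depending on $M$ and $N'$. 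The right-hand side is $2^{-\ell N'}$ times a product of one-dimensional approximate identities in $x$ and in $y$, so by a standard argument $(|f|\ast|K_{j,v}|)(x,y)\lesssim 2^{-\ell N'}M_{\mathrm{str}}f(x,y)$ uniformly in $(j,v)$. Taking the supremum over $j\in\Z$ and $v\in V_j$ and then invoking the $L^p$-boundedness of $M_{\mathrm{str}}$ finishes the plan.

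The only delicate point is confirming the lower bound $|\Psi'(s)|\sim 2^{3\ell}$ on $\supp\beta$: this relies on $C_1,C_2$ being large enough that the contributions of $\tilde\xi$ and $\tilde\eta$ are negligible relative to that of $\tilde v\sim 2^{3\ell}$, which is precisely the role of the thresholds $-C_1+2\ell+j$ and $-C_2+\ell+2j$ in the definition of the low-frequency part. Once this is verified the remainder is standard, and the apparent complication of passing to a supremum over $(j,v)$ disappears because the estimate is recovered pointwise by a single strong maximal function.
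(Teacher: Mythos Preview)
Your proof is correct and follows essentially the same approach as the paper: non-stationary phase in the $t$-integral (the cubic term $\sim 2^{3\ell}$ dominates on the low-frequency support), leading to a kernel estimate of the form $2^{-\ell N}$ times a product of one-dimensional approximate identities at scales $2^{-C_1+2\ell+j}$ and $2^{-C_2+\ell+2j}$, and then pointwise domination by the strong maximal function. The paper organises this by first rescaling to the multiplier $\fm_{3\ell}^w$ and then bounding its $C^N$ norm (Lemma~\ref{lem:low kernel}), whereas you perform the $t\mapsto 2^{\ell-j}s$ substitution directly and track the derivative bounds on $m_{j,v}$, but the content is the same.
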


The proof of Lemma~\ref{lem:low} is based on pointwise comparison with the strong maximal function, which is achieved by estimating the kernel using elementary integration-by-parts arguments. We postpone the details until \S\ref{subsec:low}. 

For the high frequencies, we introduce a rescaled variant $\cH_n^w$ of $H_{-j+\ell}^v$ for which we prove the key estimates and then relate back to $H_{-j+\ell}^v$. Given $n \in \N$, $w > 0$, define
\begin{equation*}
    \cH_n^w f(x,y) := \frac{1}{(2\pi)^2} \int_{\widehat{\R}^2} e^{i (x \xi + y \eta )} \int_\R e^{-i 2^n(t \xi + t^2 \eta - t^3 w)} \beta(t) \frac{\ud t}{t} \widehat{f}(\xi,\eta) \ud \xi \ud \eta
\end{equation*}
for all $f \in \cS(\R^2)$. Furthermore, for $(k_1, k_2) \in \Z^2$ we define the frequency localised operators
\begin{gather*}
\cH_{n,k_1,k_2}^w := \cH_n^w P_{k_1}^{(1)}P_{k_2}^{(2)}, \quad \cH_{n,\leq k_1, k_2}^w := \cH_n^w P_{\leq k_1}^{(1)}P_{k_2}^{(2)}, \quad
\cH_{n, k_1, \leq k_2}^w := \cH_n^w P_{k_1}^{(1)}P_{\leq k_2}^{(2)}.
\end{gather*}
With this notation, our goal is to prove the following.

\begin{theorem}\label{thm:key} There exist $p_{\circ}>2$ and $\varepsilon_{\circ}>0$, $\delta_{\circ}>0$ such that the inequalities
    \begin{align}
        \Big( \int_1^8 \| \cH^w_{n, k_1, \leq -C_2} f \|_{L^{p_{\circ}}(\R^2)}^{p_{\circ}} \ud w \Big)^{1/p_{\circ}} & \lesssim 2^{-n/p_{\circ}- n \varepsilon_{\circ}-k_1 \delta_{\circ}} \| f \|_{L^{p_{\circ}}(\R^2)}, \label{eq:low freq eta}
        \\
        \Big( \int_1^8 \| \cH^w_{n, \leq -C_1, k_2} f \|_{L^{p_{\circ}}(\R^2)}^{p_{\circ}} \ud w \Big)^{1/p_{\circ}} & \lesssim 2^{-n/p_{\circ}- n \varepsilon_{\circ}-k_2 \delta_{\circ}} \| f \|_{L^{p_{\circ}}(\R^2)}, \label{eq:low freq xi}
        \\
        \Big( \int_1^8 \| \cH_{n,k_1,k_2}^w f \|_{L^{p_{\circ}}(\R^2)}^{p_{\circ}} \ud w \Big)^{1/p_{\circ}} & \lesssim 2^{-n/p_{\circ}- n \varepsilon_{\circ}-(k_1+k_2) \delta_{\circ}} \| f \|_{L^{p_{\circ}}(\R^2)} \label{eq:high freq}
    \end{align}
    hold for all $n \in \N$, $k_1> -C_1$ and $k_2 > - C_2$.
\end{theorem}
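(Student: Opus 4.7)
The plan is to realise $\cH_n^w$ (after frequency localisation) as a translation-invariant Schr\"odinger-type propagator of the form covered by Theorem~\ref{thm: general propagator}, apply the local-smoothing estimate there, and then combine with elementary interpolation to produce the required decay. The first step is to apply stationary phase in $t$ to the multiplier
\begin{equation*}
m_n^w(\xi,\eta) := \int_\R e^{-i 2^n(t\xi + t^2 \eta - t^3 w)} \beta(t) \frac{\ud t}{t}.
\end{equation*}
The critical point equation $\xi + 2t \eta - 3 t^2 w = 0$ has at most two real solutions $t_\circ = t_\circ(w;\xi,\eta)$ in $\supp \beta$, and writing $\phi(w;\xi,\eta) := -(t_\circ \xi + t_\circ^2 \eta - t_\circ^3 w)$ one obtains the asymptotic identity
\begin{equation*}
m_n^w(\xi,\eta) = 2^{-n/2} e^{i 2^n \phi(w;\xi,\eta)} a(w;\xi,\eta) + \mathrm{error},
\end{equation*}
where the error is rapidly decaying on the non-stationary region (via repeated integration by parts) and is controlled by van der Corput's lemma on the small region where $|2\eta - 6 t_\circ w|$ is small. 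After a further Littlewood--Paley decomposition in this latter region, $\cH_{n, k_1, k_2}^w$ becomes (up to acceptable errors) a finite sum of rescaled Schr\"odinger propagators $2^{-n/2} U^{2^n}[\Phi; a^\lambda]$ with the translation-invariant phase $\Phi(x, y, w; \xi, \eta) := x\xi + y\eta + \phi(w;\xi,\eta)$.

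To apply Theorem~\ref{thm: general propagator}, I would verify Definition~\ref{dfn: Nik non compress}. Implicit differentiation of the critical-point equation yields the clean formulas $\partial_\xi \phi = -t_\circ$ and $\partial_\eta \phi = -t_\circ^2$; consequently the matrix $\partial^2_{\xi\xi}\phi$ has rank one, so $\det \partial^2_{\xi\xi}\phi \equiv 0$, which is exactly condition (a) of Definition~\ref{dfn: Nik non compress}. Condition (b) and the H\"ormander-type condition \eqref{eq: Hormander gamma} reduce, after further $w$-differentiation of $t_\circ$, to explicit rational expressions in $t_\circ$, $\eta$, $w$ that are bounded away from zero outside an exceptional subset. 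On the non-exceptional region Theorem~\ref{thm: general propagator} then supplies
\begin{equation*}
\Bigl(\int_1^8 \| \cH_{n, k_1, k_2}^w f\|_{L^{p_\circ}(\R^2)}^{p_\circ} \ud w \Bigr)^{1/p_\circ} \lesssim 2^{-n(1/2 - \beta(p_\circ))} \| f \|_{L^{p_\circ}(\R^2)}
\end{equation*}
with $\beta(p_\circ) < 1/2$. Interpolating this with the baseline bound $\lesssim 2^{-n/p_\circ}$---obtained by combining the Plancherel bound $\| \cH_n^w f\|_{L^2} \lesssim 2^{-n/2} \| f\|_{L^2}$ with the trivial Minkowski bound $\|\cH_n^w f \|_{L^\infty} \lesssim \| f\|_{L^\infty}$ from the total variation of the kernel---produces the advertised factor $2^{-n/p_\circ - n\varepsilon_\circ}$. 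The extra gains $2^{-(k_1 + k_2)\delta_\circ}$ and the low-frequency analogues in \eqref{eq:low freq eta}--\eqref{eq:low freq xi} come from sharper stationary-phase estimates in different regimes---for example the improved lower bound $|2\eta - 6t_\circ w| \sim 2^{k_2 \vee 0}$ in the balanced case, or the rapid non-stationary decay $(2^n \cdot 2^{k_1 \vee k_2})^{-N}$ when $\xi$ and $\eta$ are unbalanced---interpolated against the trivial bounds.

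The main obstacle is the exceptional subset where the Nikodym non-compression hypothesis (b) fails: this is the analogue in our setting of the Kakeya-compression phenomenon of Wisewell~\cite{Wisewell2005} flagged in the introduction. Following the strategy described in the excerpt, I would isolate this piece and bound it instead by variable-coefficient $\ell^2$-decoupling (as in \cite{Wolff2000, BD2015, BHS, CGGHMW}), which yields an $L^{p_\circ}$ estimate with only logarithmic loss in $n$---comfortably absorbed by the $2^{-n \varepsilon_\circ}$ savings available on the non-exceptional part. A secondary technical point is to patch the analysis smoothly across the boundaries between the non-stationary, non-degenerate-stationary, and critical-coalescence regimes in $(\xi,\eta,w)$-space; here the fold-singularity techniques of Pramanik--Seeger~\cite{PS2007} should provide the right template, as anticipated in the excerpt.
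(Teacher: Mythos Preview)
Your overall strategy---stationary phase to reduce to a translation-invariant propagator $U^{2^n}[\Phi_\pm;\fa_\pm]$, verify Nikodym non-compression, apply Theorem~\ref{thm: general propagator}, and handle the non-stationary and critical-coalescence regions separately via the Pramanik--Seeger template---matches the paper's approach. The identification $\partial_\xi\phi_\pm=-t_\circ$, $\partial_\eta\phi_\pm=-t_\circ^2$ and hence $\det\partial^2_{\bxi\bxi}\phi_\pm\equiv 0$ is correct.

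The genuine gap is in your treatment of the exceptional region. A direct computation (cf.\ \eqref{eq: Nik formula}) gives $\det N_\pm \sim \xi\, t_\pm^8/\Delta^5$, so the exceptional set is $\{|\xi|\le\sigma\}$ for some small $\sigma$, eventually taken as a negative power of $\lambda=2^n$. You claim decoupling there gives ``only logarithmic loss in $n$,'' which is then ``absorbed by the $2^{-n\varepsilon_\circ}$ savings available on the non-exceptional part.'' This logic fails: the two pieces are \emph{added}, so if the exceptional piece satisfies only $\lesssim n^C 2^{-n/p_\circ}\|f\|$, it dominates and the $\varepsilon_\circ$-gain is lost. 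The exceptional piece must carry its \emph{own} $2^{-n\varepsilon}$ saving, and that saving must come from the smallness $\sigma$ of its frequency support. The paper extracts a factor $\sigma^{1/2-1/p}$ via a two-step argument (Proposition~\ref{prop: ls decoupling}): first a small-cap decoupling in $\eta$ at scale $\sigma$---essentially planar $L^4$ restriction for the curve $s\mapsto\partial_w\phi_\pm(w;0,s)$, whose second derivative does \emph{not} degenerate at $\xi=0$ (see \eqref{eq: curv formula})---and then fine-scale $\ell^p$-decoupling at scale $\lambda^{-1/2}$. Note also that $\partial^2_{\bxi\bxi}\partial_w\phi_\pm$ has eigenvalues of opposite sign (by \eqref{eq: H2 formula} the determinant is negative), so this is the hyperbolic regime; the paper therefore appeals to the $\ell^p$ variant from \cite{BD2017} rather than the elliptic $\ell^2$-decoupling you cite.

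A smaller point: your interpolation with the ``baseline bound $\lesssim 2^{-n/p_\circ}$'' is unnecessary. After the rescaling $w\mapsto 2^{-n}w$, the bound $\|U^\lambda f\|_{L^{p_\circ}(\R^3)}\lesssim\lambda^{\beta(p_\circ)}\|f\|_{L^{p_\circ}}$ from Theorem~\ref{thm: general propagator} translates directly into $\bigl(\int_1^8\|\cH_{n,k_1,k_2}^wf\|_{L^{p_\circ}}^{p_\circ}\ud w\bigr)^{1/p_\circ}\lesssim 2^{-n/p_\circ-n(1/2-\beta(p_\circ))}\|f\|_{L^{p_\circ}}$; the factor $2^{-n/p_\circ}$ arises from the Jacobian of the $w$-change together with the $2^{-n/2}$ stationary-phase prefactor, not from any interpolation.
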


Once the above theorem is established, Theorem~\ref{thm:Cl} follows from standard Littlewood--Paley, Sobolev embedding and scaling arguments.

\begin{proof}[Proof of Theorem~\ref{thm:Cl} (assuming Lemma~\ref{lem:low} and Theorem~\ref{thm:key})] 
Let $\ell \in \N$, $j \in \Z$, $k_1,k_2 \in \Z$. Performing the change of variables 
\begin{equation*}
    (t; \xi,\eta) \mapsto (2^{\ell-j}t; 2^{2\ell + j} \xi, 2^{\ell + 2j}\eta)
\end{equation*}
in \eqref{eq:mult}, we obtain
\begin{equation*}
    H_{-j+\ell}^v P_{k_1+2\ell+j}^{(1)} P_{k_2 + \ell + 2j}^{(2)} f(x,y) = \cH_{3\ell}^{2^{-3j} v} P_{k_1}^{(1)} P_{k_2}^{(2)} f_{\ell,j} (2^{2\ell+j} x, 2^{\ell+2j}y)
\end{equation*}
where
\begin{equation*}
    f_{\ell,j}(x,y) := f(2^{-2\ell-j} x, 2^{-\ell-2j} y).
\end{equation*}
Recall that $V_j := [2^{3j}, 2^{3(j+1)}]$. By the above scaling, for any fixed $j \in \Z$, $p \geq 1$ and $M > 0$, the norm inequality
\begin{equation*}
    \| \sup_{v \in V_j} |H_{-j+\ell}^{v} P_{k_1+2\ell+j}^{(1)} P_{k_2+\ell+2j}^{(2)} f| \|_{L^p(\R^2)} \leq M\|f\|_{L^p(\R^2)}
\end{equation*}
holds for all $f \in \cS(\R^2)$ if and only if the norm inequality
\begin{equation*}
\| \sup_{1 \leq w \leq 8} |\cH^{w}_{3\ell, k_1, k_2} f| \|_{L^p(\R^2)}\leq M\|f\|_{L^p(\R^2)}
\end{equation*}
holds for all $f \in \cS(\R^2)$, with the same constant $M$. A similar statement holds if either $P_{k_1}^{(1)}$ or $P_{k_2}^{(2)}$ is replaced with $P_{\leq k_1}^{(1)}$ or $P_{\leq k_2}^{(2)}$.

For $g \in \cS(\R^2)$, by the Fundamental Theorem of Calculus and Hölder's inequality,
    \begin{equation*}
        \sup_{1 \leq w \leq 8} |\cH_{3\ell}^w g|^p \leq |\cH^1_{3\ell} g|^p + p \Big(\int_1^8 |\cH^w_{3\ell} g|^p \ud w \Big)^{1 - 1/p} \Big( \int_1^8 |\partial_w \cH_{3\ell}^w g|^p \ud w \Big)^{1/p}.
    \end{equation*}
Furthermore,
\begin{equation*}
\partial_w \cH^w_{3\ell} g = 3i2^{3\ell} \widetilde{\cH}^w_{3\ell} g,  
\end{equation*}
where the operator $\widetilde{\cH}^w_{3\ell}$ is defined by the Fourier multiplier
\begin{equation*}
    \int_{\R} e^{i 2^{3 \ell} (-t \xi - t^2 \eta + t^3 w)}\beta(t) t^2 \ud t,
\end{equation*}
which is essentially identical to the multiplier of $\cH^w_{3\ell}$. Combining the above observations and letting $g$ be suitable Littlewood--Paley projections of $f$, provided $C_1$, $C_2 \geq 1$ are chosen sufficiently large, Theorem~\ref{thm:key} implies
\begin{align*}
    \| \sup_{v \in V_j} | H^v_{-j+\ell} P_{ k_1+2\ell+j}^{(1)} P_{\leq -C_2+\ell+2j}^{(2)} f| \|_{L^{p_{\circ}}(\R^2)} & \lesssim 2^{- 3\ell \varepsilon_{\circ}-k_1 \delta_{\circ}} \| \widetilde{P}_{k_1+2\ell+j}^{(1)}f \|_{L^{p_{\circ}}(\R^2)}, \\
    \| \sup_{v \in V_j} | H^v_{-j+\ell} P_{\leq -C_1+2\ell+j}^{(1)} P_{k_2+\ell+2j}^{(2)} f| \|_{L^{p_{\circ}}(\R^2)}  & \lesssim 2^{- 3\ell \varepsilon_{\circ}-k_2 \delta_{\circ}} \| \widetilde{P}^{(2)}_{k_2+\ell+2j}f \|_{L^{p_{\circ}}(\R^2)}, \\
  \| \sup_{v \in V_j} | H^v_{-j+\ell} P_{ k_1+2\ell+j}^{(1)} P_{k_2+\ell+2j}^{(2)} f \|_{L^{p_{\circ}}(\R^2)}  &\lesssim 2^{- 3\ell \varepsilon_{\circ}-(k_1+k_2) \delta_{\circ}} \| \widetilde{P}^{(1)}_{k_1+2\ell+j} f \|_{L^{p_{\circ}}(\R^2)},  
\end{align*}  
where we have used that $P_m^{(i)}=P_m^{(i)}\widetilde{P}_m^{(i)}$ for $m \in \Z$ and $i = 1$, $2$. 

By (one-dimensional) Littlewood--Paley theory and Fubini's theorem, we have that
\begin{equation*}
    \Big( \sum_{j \in \Z} \| \widetilde{P}^{(i)}_j f \|_{L^p(\R^2)}^p \Big)^{1/p} \lesssim \| f \|_{L^p(\R^2)} \qquad \textrm{for all $2 \leq p < \infty$.}
\end{equation*}
 Using that $\sup_{j \in \Z} |a_j|^p \leq \sum_{j \in \Z} |a_j|^p$, the above estimates therefore upgrade to
\begin{align*}
    \sum_{k_1 > -C_1}\| \sup_{j \in \Z} \sup_{v \in V_j} | H^v_{-j+\ell} P_{k_1+2\ell+j}^{(1)} P_{\leq -C_2+\ell+2j}^{(2)} f| \|_{L^{p_{\circ}}(\R^2)}  & \lesssim 2^{- 3\ell \varepsilon_{\circ}} \| f \|_{L^{p_{\circ}}(\R^2)}, \\
    \sum_{k_2 > -C_2}\| \sup_{j \in \Z} \sup_{v \in V_j} | H^v_{-j+\ell} P_{\leq -C_1+2\ell+j}^{(1)} P_{k_2+\ell+2j}^{(2)} f| \|_{L^{p_{\circ}}(\R^2)}  & \lesssim 2^{- 3\ell \varepsilon_{\circ}} \| f \|_{L^{p_{\circ}}(\R^2)}, \\
    \sum_{k_1 > -C_1}\sum_{k_2 > -C_2}\| \sup_{j \in \Z} \sup_{v \in V_j} | H^v_{-j+\ell} P_{k_1+2\ell+j}^{(1)} P_{k_2+\ell+2j}^{(2)} f| \|_{L^{p_{\circ}}(\R^2)}  & \lesssim 2^{- 3\ell \varepsilon_{\circ}} \| f \|_{L^{p_{\circ}}(\R^2)}.
\end{align*}
In view of the decomposition \eqref{eq:freq dec}, theses estimates, together with Lemma~\ref{lem:low} and an application of the triangle inequality, imply \eqref{eq:Cl main}.
\end{proof}




\subsection{Isolating the 
critical points}\label{subsec:critical points} 
Let $m_{n}^w$ denote the multiplier associated to the operator $\cH_n^w$, so that
\begin{equation}\label{eq: multiplier 1}
    m_{n}^w(\xi, \eta):=\int_{\R} e^{i 2^n\phi_{\xi,\eta}^w(t)} a(t) \ud t 
\end{equation}
where
\begin{equation}\label{eq: multiplier 2}
    \phi_{\xi,\eta}^w(t) := -t \xi - t^2 \eta + t^3 w \quad \textrm{and} \quad a(t):=\frac{\beta(t)}{t}.
\end{equation}
For $(k_1, k_2) \in \Z^2$, the multipliers of the frequency localised operators $\cH_{n,k_1,k_2}^w$, $\cH_{n,\leq k_1, k_2}^w$, $\cH_{n, k_1, \leq k_2}^w$ are then given by
\begin{align*}
   m_{n,k_1,k_2}^w(\xi,\eta) &:= \beta(2^{-k_1}\xi)\beta(2^{-k_2}\eta) m_{n}^w(\xi, \eta), \\
   m_{n,\leq k_1,k_2}^w(\xi,\eta) &:= \beta_0(2^{-k_1}\xi)\beta(2^{-k_2}\eta) m_{n}^w(\xi, \eta), \\
   m_{n, k_1,\leq k_2}^w(\xi,\eta) &:= \beta(2^{-k_1}\xi)\beta_0(2^{-k_2}\eta) m_{n}^w(\xi, \eta),
\end{align*}
respectively. 

We analyse the above multipliers using the method of stationary phase. This relies on understanding the critical points of $\phi_{\xi,\eta}^w$, which are the zeros of the function 
\begin{equation*}
    (\phi_{\xi,\eta}^w)'(t) = -\xi -2 t \eta + 3 t^2 w.
\end{equation*}
These are given by
\begin{equation}\label{240612e6_9uuu}
   t_{\pm}(w; \xi, \eta) := \frac{
    \eta\pm 
    \sqrt{\Delta(w; \xi, \eta)
    }
    }{3w} \quad \textrm{where} \quad \Delta(w; \xi, \eta) := \eta^2+3w\xi,
\end{equation}
provided $\Delta(w; \xi, \eta) \geq 0$. 
We partition the range of $(k_1, k_2) \in \Z^2$ according to whether the critical points $t_\pm(w; \xi, \eta)$ exist for either $(\xi,\eta) \in \supp m_{n,k_1,k_2}^w$, $(\xi,\eta) \in \supp m_{n,\leq k_1,k_2}^w$ or $(\xi,\eta) \in \supp m_{n,k_1,\leq k_2}^w$, for some choice of $1 \leq w \leq 8$. To this end, define the sets of indices
\begin{align*}
    \cG_1 &:= \{(k_1, k_2) \in \Z^2 \cap (-C_1, \infty) \times (-C_2, \infty) : k_1 > (k_2 \vee 0) + 8 \} \\
    &= \{(k_1, k_2) \in \Z^2 : k_1 > 8 \textrm{ and } k_1 - 8 > k_2 > -C_2 \}
\end{align*}
and
\begin{align*}
 \cG_2 &:= \{(k_1, k_2) \in \Z^2 \cap (-C_1, \infty) \times (-C_2, \infty) : k_2 > (k_1 \vee 0) + 8 \}\\
 &= \{(k_1, k_2) \in \Z^2 : k_2 > 8 \textrm{ and } k_2 - 8 > k_1 > -C_1 \}.   
\end{align*}
We shall show below, in the proof of Lemma~\ref{lem:non-stationary multiplier}, that for $(k_1, k_2) \in \cG_1$ or $(k_1, k_2) \in \cG_2$ there are no critical points when $(\xi, \eta) \in \supp m_{n,k_1,\leq k_2}^w$ or $(\xi, \eta) \in \supp m_{n,\leq k_1,k_2}^w$, respectively. This leads to very favourable bounds for the corresponding multipliers: see Proposition~\ref{prop:L2 non-stationary}.  

It is easy to check that $\big(\Z^2 \cap (-C_1, \infty) \times (-C_2, \infty)\big) \setminus (\cG_1 \cup \cG_2) \subseteq \cB_{\diag}$ where
\begin{equation*}
   \cB_{\diag} := \{(k_1, k_2) \in \Z : |k_1 - k_2| \leq (C_1 \vee C_2) + 16 \text{ and } k_1>-C_1, k_2>-C_2-8 \}.
\end{equation*}
These `diagonal' exponents index pieces of the multiplier admitting critical points.




\subsection{Non-stationary case} The following proposition guarantees very strong estimates when the the exponents $(k_1, k_2)$ lie in $\cG_1$ or $\cG_2$.

\begin{proposition}[Non-stationary estimates]\label{prop:L2 non-stationary}
   Let $2 \leq p < \infty$ and $n \in \N$. 
   \begin{enumerate}[i)]
       \item If $(k_1, k_2) \in \cG_1$, then
        \begin{equation*}
        \|  \cH^w_{n,k_1, k_2} \|_{L^p(\R^2) \to L^p(\R^2)} +    \|  \cH^w_{n,k_1, \leq k_2} \|_{L^p(\R^2) \to L^p(\R^2)} \lesssim_{p,N} 2^{-k_1 N - n N}
        \end{equation*}
        holds for all $N \in \N_0$, uniformly in $1 \leq w \leq 8$.
        \item If $(k_1, k_2) \in \cG_2$, then
        \begin{equation*}
           \|  \cH^w_{n,k_1, k_2} \|_{L^p(\R^2) \to L^p(\R^2)} +   \| \cH^w_{n, \leq  k_1, k_2}\|_{L^p(\R^2) \to L^p(\R^2)} \lesssim_{p,N} 2^{-k_2 N - n N}
        \end{equation*}
        holds for all $N \in \N_0$, uniformly in $1 \leq w \leq 8$.
   \end{enumerate}
\end{proposition}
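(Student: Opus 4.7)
The plan is to exploit the non-stationarity of the phase $\phi^w_{\xi,\eta}(t) = -t\xi - t^2\eta + t^3 w$ in $t$ on the frequency supports in question, deduce rapid decay of $m_n^w$ and its $(\xi,\eta)$-derivatives via integration by parts, and then obtain an $L^1$ convolution-kernel bound that gives the claimed operator estimates via Young's inequality. I focus on case i) with $\cH^w_{n,k_1,k_2}$; the remaining cases are analogous.

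For $(k_1, k_2) \in \cG_1$, the support conditions force $k_1 \geq 9$, $k_1 \geq k_2 + 9$, $|\xi| \geq 2^{k_1-1}$ and $|\eta| \leq 2^{k_2+1}$. For $t \in \supp a \subseteq [-2,-1/2]\cup[1/2,2]$ and $w \in [1,8]$,
\begin{equation*}
|(\phi^w_{\xi,\eta})'(t)| = |{-\xi - 2t\eta + 3t^2 w}| \geq |\xi| - 4|\eta| - 96 \gtrsim 2^{k_1},
\end{equation*}
since $4|\eta| \leq 2^{k_1-6}$ and $96 \leq 2^{k_1-2}$, while all higher $t$-derivatives of the phase are $O(1)$. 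Repeated integration by parts in $t$ (using the compact support and smoothness of $a$) then yields, for every $N \in \N$ and all $\alpha,\beta \in \N_0$,
\begin{equation*}
|\partial_\xi^{\alpha}\partial_\eta^{\beta} m_n^w(\xi,\eta)| \lesssim_{\alpha,\beta,N} (2^n)^{\alpha+\beta}(2^n 2^{k_1})^{-N},
\end{equation*}
the factor $(2^n)^{\alpha+\beta}$ arising from differentiating $e^{i 2^n \phi^w_{\xi,\eta}(t)}$ in the frequency variables before applying integration by parts in $t$.

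Combined with the elementary estimate $|\partial_\xi^\alpha\partial_\eta^\beta [\beta(2^{-k_1}\xi)\beta(2^{-k_2}\eta)]| \lesssim 2^{-k_1\alpha - k_2\beta}$ and the product rule, the worst term in the derivatives of $\beta(2^{-k_1}\xi)\beta(2^{-k_2}\eta) m_n^w(\xi,\eta)$ is still of size $(2^n)^{\alpha+\beta}(2^n 2^{k_1})^{-N}$. Writing $K^w_{n,k_1,k_2}$ for the convolution kernel of $\cH^w_{n,k_1,k_2}$ and applying integration by parts in $(\xi,\eta)$ in the Fourier inversion formula, these symbol bounds yield
\begin{equation*}
|K^w_{n,k_1,k_2}(x,y)| \lesssim_{A,N} 2^{k_1+k_2}(2^n 2^{k_1})^{-N} \bigl(1+|x|/2^n\bigr)^{-A}\bigl(1+|y|/2^n\bigr)^{-A}
\end{equation*}
for any $A, N \in \N$, after integrating symbol derivatives over a region of area $\sim 2^{k_1+k_2}$. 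Integrating in $(x,y)$ gives
\begin{equation*}
\|K^w_{n,k_1,k_2}\|_{L^1(\R^2)} \lesssim 2^{k_1+k_2+2n}(2^n 2^{k_1})^{-N},
\end{equation*}
and since $k_2 \leq k_1 - 9$ in $\cG_1$ the right-hand side is $\lesssim 2^{-N'(k_1+n)}$ for any prescribed $N'$, provided $N$ is chosen large enough. Young's inequality yields the claimed bound uniformly in $w \in [1,8]$ for all $1 \leq p \leq \infty$. The estimate for $\cH^w_{n, k_1, \leq k_2}$ follows from the identical argument, since replacing $\beta(2^{-k_2}\eta)$ by $\beta_0(2^{-k_2}\eta)$ only changes the $\eta$-support from an annulus to a ball of comparable radius, preserving the non-stationary phase estimate.

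Case ii) is symmetric: for $(k_1, k_2) \in \cG_2$ the dominant term in $(\phi^w_{\xi,\eta})'(t)$ is $-2t\eta$ rather than $-\xi$, and one verifies $|(\phi^w_{\xi,\eta})'(t)| \geq |2t\eta| - |\xi| - 96 \gtrsim 2^{k_2}$ on $\supp a$ using $|\xi| \leq 2^{k_2 - 7}$ and $k_2 \geq 9$. The rest of the argument proceeds verbatim with $2^{k_1}$ replaced by $2^{k_2}$ throughout. The main obstacle is controlling the factors of $2^n$ that arise when differentiating the oscillatory multiplier in $\xi$ or $\eta$; this is resolved by exploiting the freedom to choose $N$ much larger than $N' + \alpha + \beta$ in the integration-by-parts estimate, so that the geometric decay in $2^n 2^{k_1}$ (respectively $2^n 2^{k_2}$) dominates all polynomial factors of $2^n$.
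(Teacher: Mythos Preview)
Your approach is correct and yields the result, but it differs from the paper's route and contains one inaccurate intermediate claim that you should fix.

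The paper first proves the $L^2$ bound by combining Plancherel's theorem with the pointwise multiplier estimate $|m_n^w(\xi,\eta)| \lesssim_N (2^n|\xi|)^{-N}$ (Lemma~\ref{lem:non-stationary multiplier}), and then obtains the full range $2 \leq p < \infty$ by interpolating against the \emph{undecayed} kernel bound $\|\mathcal{F}^{-1}[m_{n,k_1,k_2}^w]\|_{L^1(\R^2)} \lesssim 1$ of Lemma~\ref{lem:kernel}, which is proved by a different integration by parts (in $(\xi,\eta)$ for fixed $t$). You instead build the rapid decay directly into a single $L^1$ kernel estimate via symbol bounds on $\partial_\xi^\alpha\partial_\eta^\beta m_n^w$ and then invoke Young's inequality for all $p$ at once. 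Your route is a bit more hands-on (it requires tracking the $(2^n)^{\alpha+\beta}$ losses from differentiating the oscillatory factor and then absorbing them by choosing $N$ large), but it avoids the interpolation step entirely and gives the bound for every $1\le p\le\infty$, not just $p\ge 2$.

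The slip: you assert that ``all higher $t$-derivatives of the phase are $O(1)$'', but $(\phi^w_{\xi,\eta})''(t)=-2\eta+6tw$ only satisfies $|(\phi^w_{\xi,\eta})''(t)| \lesssim 2^{k_2}+1$, and $k_2$ can be arbitrarily large in $\cG_1$. This does not break the argument: since $k_2 \leq k_1 - 9$ one still has $|(\phi^w_{\xi,\eta})''(t)| \lesssim 2^{k_1}$, comparable to the lower bound on $|(\phi^w_{\xi,\eta})'(t)|$, and after rescaling the phase by $2^{k_1}$ the usual non-stationary phase lemma applies to give the claimed $(2^{n+k_1})^{-N}$ decay. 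The same correction is needed in case ii), where $|(\phi^w_{\xi,\eta})''(t)| \lesssim 2^{k_2}$ rather than $O(1)$.
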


Proposition~\ref{prop:L2 non-stationary} is a consequence of elementary $L^2$-based arguments. We postpone the proof until \S\ref{sec:L2 theory}. 




\subsection{Stationary case} The analysis of the remaining pieces of the operator requires much deeper tools. The key estimates are as follows. 

\begin{proposition}[$L^p$ local smoothing]\label{prop:Lp smoothing low}
    For all $2 < p < 5/2$ there exists  $\varepsilon(p)>0$ such that for all $n \in \N$ the following hold.
    \begin{enumerate}[i)]
        \item If $-C_1 < k_1 \leq 8$, then
    \begin{equation*}
        \Big( \int_1^8 \| \cH^w_{n, k_1, \leq -C_2} f \|_{L^{p}(\R^2)}^{p} \ud w \Big)^{1/p} \lesssim_p 2^{-n/p- n \varepsilon(p)} \| f \|_{L^{p}(\R^2)}.
    \end{equation*}
        \item If $-C_2 < k_2 \leq 8$, then 
    \begin{equation*}
        \Big( \int_1^8 \| \cH^w_{n, \leq - C_1, k_2} f \|_{L^{p}(\R^2)}^{p} \ud w \Big)^{1/p} \lesssim_p 2^{-n/p- n \varepsilon(p)} \| f \|_{L^{p}(\R^2)}.
    \end{equation*}
    \end{enumerate}
\end{proposition}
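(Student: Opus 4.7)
The plan is to reduce $\cH^w_{n, k_1, \leq -C_2}$ to a variable Schr\"odinger propagator via stationary phase in $t$ and then invoke Theorem~\ref{thm: general propagator}. The argument for (i) is described below; part (ii) is entirely symmetric. On $\supp m^w_{n,k_1,\leq-C_2}$ one has $|\xi| \sim 2^{k_1}$ and $|\eta| \lesssim 2^{-C_2}$, so the identity $C_1 = 2C_2 + 12$ forces the critical points $t_{\pm}(w;\xi,\eta)$ of $\phi^w_{\xi,\eta}$ to fall outside $\supp a = \{|t| \in [1/2,2]\}$ over almost all of the support: namely whenever $\xi < 0$ (in which case the real critical points, when they exist, satisfy $|t_{\pm}| \lesssim 2^{-C_2}$), and whenever $\xi > 0$ lies outside the $O(1)$-range for which $\sqrt{\xi/(3w)} \in [1/2, 2]$. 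Outside this substantive regime, repeated integration by parts in $t$ yields $|m^w_{n,k_1,\leq -C_2}(\xi,\eta)| \lesssim_N 2^{-nN}$ for all $N \in \N$, which gives rapid decay of the corresponding operator. The remaining problem is thus confined to a finite set of $k_1 \in \{-1, 0, \ldots, 7\}$, where $t_+$ and $t_-$ are smooth, well-separated, bounded away from the boundary of $\supp a$, and of unit order.

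In this regime, microlocalizing $a$ in $t$ to neighborhoods of $t_{\pm}$ and applying standard stationary phase gives
\[
    m^w_{n,k_1,\leq -C_2}(\xi, \eta) = 2^{-n/2} \sum_{\pm} B_{\pm}(w;\xi,\eta)\, e^{i 2^n \psi_{\pm}(w;\xi,\eta)} + O_N(2^{-n N}),
\]
where $\psi_{\pm}(w;\xi,\eta) := \phi^w_{\xi,\eta}(t_{\pm}(w;\xi,\eta))$ and the $B_{\pm}$ are smooth compactly supported amplitudes. With $\lambda := 2^n$ and after rescaling $w \mapsto \lambda w$, each main term realizes $\cH^w_{n,k_1,\leq -C_2} f$ (up to an $O_N(2^{-nN})$ error) as a constant multiple of a variable Schr\"odinger propagator $U^{\lambda}[\Phi_{\pm}; B_{\pm}]$ of the form \eqref{eq: Schrod prop}, associated to the translation-invariant phase $\Phi_{\pm}(\bx, w;\bxi) = \bx\cdot\bxi + \psi_{\pm}(w;\bxi)$ with $\bx = (x,y)$ and $\bxi = (\xi,\eta)$.

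The crucial step is to verify that $\Phi_{\pm}$ satisfies \eqref{eq: Hormander gamma} and the Nikodym non-compression hypothesis of Definition~\ref{dfn: Nik non compress}. The envelope identities $\partial_{\xi}\psi_{\pm} = -t_{\pm}$, $\partial_{\eta}\psi_{\pm} = -t_{\pm}^2$, $\partial_w\psi_{\pm} = t_{\pm}^3$, combined with the relation $\partial_\eta t_{\pm} = 2t_{\pm}\partial_{\xi} t_{\pm}$ (immediate from differentiating \eqref{240612e6_9uuu}), imply
\[
    \partial^2_{\bxi\bxi}\psi_{\pm} = -\partial_{\xi} t_{\pm} \begin{pmatrix} 1 & 2 t_{\pm} \\ 2t_{\pm} & 4 t_{\pm}^2 \end{pmatrix},
\]
which has rank one and thus vanishing determinant, verifying condition (a) of Definition~\ref{dfn: Nik non compress}. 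Condition (b) and \eqref{eq: Hormander gamma} reduce, respectively, to $3 \times 3$ and $2 \times 2$ determinants in $t_{\pm}$ and its successive $w$-derivatives; both admit explicit rational closed forms and can be shown to be uniformly bounded away from zero on the compact parameter range (a direct calculation at $\xi = w = 1$, $\eta = 0$ produces a non-zero numerical value, which then propagates to a uniform lower bound by continuity and compactness). Theorem~\ref{thm: general propagator} then yields $\|U^{\lambda}[\Phi_{\pm}; B_{\pm}] f\|_{L^{p_\circ}(\R^3)} \lesssim \lambda^{\beta(p_\circ)}\|f\|_{L^{p_\circ}(\R^2)}$ with $\beta(p_\circ) < 1/2$.

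Unfolding the $w$-rescaling produces a Jacobian factor of $\lambda^{-1/p_\circ}$, which combined with the $2^{-n/2}$ gain from stationary phase gives the estimate at $p = p_\circ$ in the form $2^{-n/p_\circ - n(1/2 - \beta(p_\circ))}$. Interpolating with the trivial bound $\|\cH^w_{n,k_1,\leq-C_2} f\|_{L^2(\R^2)} \lesssim 2^{-n/2}\|f\|_{L^2(\R^2)}$ (immediate from $\|m^w_n\|_{L^\infty} \lesssim 2^{-n/2}$), and, if needed, against the Stein $L^4$-bound, extends this to all $p \in (2, 5/2)$ for a suitable $\varepsilon(p) > 0$. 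The main obstacle is the verification of the quantified Nikodym non-compression hypothesis: condition (a) is a structural consequence of $\psi_{\pm}$ being a Legendre-type transform of a one-parameter function, but the quantitative lower bound on the $3\times 3$ determinant in (b) requires a genuine (if explicit) computation with higher $w$-derivatives of $t_{\pm}(w;\bxi)$ and reflects the cinematic curvature of the associated family of curves $w \mapsto t_{\pm}(w;\bxi)$.
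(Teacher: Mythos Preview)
Your approach to part (i) is essentially the paper's: stationary phase in $t$ reduces matters to a variable Schr\"odinger propagator, and on the relevant support (where, after your reduction, $|\xi|\sim 1$) the H\"ormander and Nikodym non-compression conditions hold with uniform parameter, so Theorem~\ref{thm: general propagator} applies. This is fine.

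The genuine gap is your claim that part (ii) is ``entirely symmetric''. It is not. A direct computation (the paper carries this out in \S\ref{subsec: Nik comp}) gives
\[
\det N_{\pm}(w;\xi,\eta)=-\frac{3^7}{2^4}\cdot\frac{\xi\,t_{\pm}(w;\xi,\eta)^8}{\Delta(w;\xi,\eta)^5},
\]
which is proportional to $\xi$. In part (ii) the multiplier is localised to $|\xi|\leq 2^{-C_1+1}$, and in particular $\xi=0$ lies in the support; there the Nikodym determinant vanishes, and no compactness argument can produce a uniform lower bound $\gamma>0$. Concretely, at $\xi=0$, $\eta=2$, $w=1$ one has $\Delta=4$, $t_+=4/3\in[1/2,2]$, yet $\det N_+=0$. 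Applying Theorem~\ref{thm: general propagator} with $\gamma\sim|\xi|$ only yields a bound with a factor $|\xi|^{-M_\circ}$, which is useless near $\xi=0$.

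The paper repairs this by splitting off an \emph{exceptional} region $\{|\xi|\leq\sigma\}$ for a carefully chosen $\sigma=\lambda^{-\delta}$ and bounding it by a separate $\ell^p$-decoupling argument (Proposition~\ref{prop: ls decoupling}): the small-$\xi$ localisation buys a factor $\sigma^{1/2-1/p}$ in the decoupling estimate, which compensates for the lack of Nikodym non-compression. On the complementary region $\{|\xi|\geq\sigma\}$ one does invoke Theorem~\ref{thm: general propagator} with $\gamma\sim\sigma$, and the loss $\sigma^{-M}$ is absorbed because $\sigma$ is only a small negative power of $\lambda$. Your proposal is missing this second mechanism entirely; without it, part (ii) does not go through.
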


We also have the following estimate for comparable frequencies. 

\begin{proposition}[$L^p$ local smoothing]\label{prop:Lp smoothing diag} For all $2 < p < 5/2$ there exists $\varepsilon(p)>0$ such that for all $n \in \N$ and $(k_1, k_2) \in \cB_{\diag}$ we have
    \begin{equation}\label{eq:Lp smoothing diag}
        \Big( \int_1^8 \| \cH_{n,k_1,k_2}^{w}  f \|_{L^{p}(\R^2)}^{p} \ud w \Big)^{1/p} \lesssim_p 2^{-n/p- n \varepsilon(p) - (k_1 + k_2)\varepsilon(p)} \| f \|_{L^{p}(\R^2)}.
    \end{equation}
\end{proposition}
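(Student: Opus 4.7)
The plan is to apply stationary phase to the inner $t$-integral defining $m_n^w(\xi,\eta)$ in \eqref{eq: multiplier 1}, to recognise each resulting principal contribution as a variable Schr\"odinger propagator, and to invoke Theorem~\ref{thm: general propagator} after verifying the Nikodym non-compression hypothesis. Recall that $\phi^w_{\xi,\eta}$ has critical points $t_\pm(w;\xi,\eta) = (\eta \pm \sqrt{\Delta})/(3w)$ with $\Delta = \eta^2 + 3 w \xi$, and $\phi''(t_\pm) = \pm 2\sqrt{\Delta}$ is non-degenerate away from the coalescence locus $\Delta = 0$.

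For $(k_1,k_2) \in \cB_{\diag}$, the critical points $t_\pm$ lie in $\supp \beta(t)$ only when $\xi + 2t\eta = O(1)$ for some $t \in \supp\beta$, which confines $(\xi,\eta)$ to a tubular neighbourhood of a curve. Outside this neighbourhood, repeated integration by parts in $t$ gives $|m_n^w| \lesssim_N (2^n \cdot 2^{\max(k_1,k_2)})^{-N}$, which via Plancherel yields arbitrary gains, while the coalescence set $|\Delta| \ll 1$ is handled by van der Corput's lemma for cubic phases. One is thus reduced to the main region where $(\xi,\eta)$ lie in a fixed compact set (so effectively $k_1, k_2 = O(1)$), $|\Delta| \gtrsim 1$, and both critical points are well separated. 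On this main region, standard stationary phase gives
\begin{equation*}
    m_n^w(\xi,\eta) = 2^{-n/2}\sum_{\pm} e^{i 2^n \Psi_\pm(w;\xi,\eta)}\sigma_\pm(w;\xi,\eta) + O(2^{-3n/2}),
\end{equation*}
where $\Psi_\pm(w;\xi,\eta) := \phi^w_{\xi,\eta}(t_\pm) = \eta t_\pm^2 - 2 w t_\pm^3$ is real-analytic and $\sigma_\pm$ is a classical symbol of order zero with compact support.

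Setting $\lambda := 2^n$, the principal operator
\begin{equation*}
    \cH^{w,\pm}_n f(x,y) := \frac{2^{-n/2}}{(2\pi)^2}\int e^{i(x\xi + y\eta + 2^n\Psi_\pm(w;\xi,\eta))}\sigma_\pm(w;\xi,\eta)\widehat{f}(\xi,\eta)\,\ud\xi\,\ud\eta
\end{equation*}
coincides, after rescaling $w \mapsto \lambda w$, with $2^{-n/2}$ times the variable Schr\"odinger propagator $U^\lambda[\Phi_\pm; \sigma_\pm]$ associated to the translation-invariant phase $\Phi_\pm(\bx', w';\bxi) := \bx' \cdot \bxi + \Psi_\pm(w';\bxi)$. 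By the envelope theorem, $(\partial_\xi\Psi_\pm, \partial_\eta\Psi_\pm) = (-t_\pm, -t_\pm^2)$, so the frequency-gradient of $\Psi_\pm$ traces the parabola $\{(s, s^2)\}$; this immediately forces $\det\partial^2_{\bxi\bxi}\Psi_\pm \equiv 0$, yielding Definition~\ref{dfn: Nik non compress}(a) for free. Implicit differentiation of $3w t^2 - 2\eta t - \xi = 0$ supplies $\partial_\xi t_\pm = \frac{1}{2d}$, $\partial_\eta t_\pm = \frac{t_\pm}{d}$, $\partial_w t_\pm = -\frac{3t_\pm^2}{2d}$ with $d := \pm\sqrt{\Delta}$; a direct computation then shows that $\det(\partial_w \partial^2_{\bxi\bxi}\Psi_\pm) = -9t_\pm^4/(4d^4)$, verifying \eqref{eq: Hormander gamma} uniformly on the main region, and the $3\times 3$ determinant of Definition~\ref{dfn: Nik non compress}(b) reduces to a similar (if more intricate) rational function of $(w, t_\pm, d)$ whose non-vanishing is checked by a finite calculation.

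Applying Theorem~\ref{thm: general propagator} with the Jacobian $\lambda^{-1/p_\circ}$ from the $w$-rescaling yields
\begin{equation*}
    \Big(\int_1^8 \|\cH^{w,\pm}_n f\|^{p_\circ}_{L^{p_\circ}(\R^2)}\,\ud w\Big)^{1/p_\circ} \lesssim 2^{-n/2}\lambda^{-1/p_\circ}\|U^\lambda[\Phi_\pm;\sigma_\pm] f\|_{L^{p_\circ}(\R^3)} \lesssim 2^{-n/p_\circ - n\varepsilon(p_\circ)}\|f\|_{L^{p_\circ}(\R^2)}
\end{equation*}
with $\varepsilon(p_\circ) := 1/2 - \beta(p_\circ) > 0$, which suffices since $\beta(p_\circ) < 1/2$. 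The $2^{-(k_1+k_2)\varepsilon(p_\circ)}$ factor is trivially $O(1)$ on the bounded main region and is swallowed by the non-stationary gains elsewhere. The main technical obstacle is the Nikodym verification: the $3\times 3$ determinant requires tracking $w$-derivatives of $t_\pm$ up to order three via iterated implicit differentiation, and establishing a uniform lower bound requires careful case analysis — especially to control the behaviour near (but quantitatively away from) the coalescence set $d \to 0$ — together with the bookkeeping needed to match the framework of Theorem~\ref{thm: general propagator}, whose phase domain $\B^1_\rho \times \B^2_\rho$ is handled by covering $w \in [1,8]$ with translated patches.
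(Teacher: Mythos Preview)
Your overall strategy---stationary phase in $t$, then recognise each principal term as a variable Schr\"odinger propagator and invoke Theorem~\ref{thm: general propagator}---matches the paper's core idea. But the reduction step contains a real error: you claim the stationary region forces $(\xi,\eta)$ into a fixed compact set, so that ``effectively $k_1,k_2=O(1)$''. This is false. For any large $k_1\approx k_2$, taking $\xi\approx -2t_0\eta$ with $t_0\in[1/2,2]$ gives $|\xi|\sim|\eta|\sim 2^{k_1}$ with a critical point in $\supp\beta$ (indeed $\Delta\approx\eta^2\gg|\xi|$ and $t_-\approx -\xi/(2\eta)=O(1)$). The ``tubular neighbourhood of a curve'' you describe is an unbounded cone, not a compact set, so the stationary contribution persists for all $(k_1,k_2)\in\cB_{\diag}$ and the $2^{-(k_1+k_2)\varepsilon(p)}$ decay cannot be absorbed this way. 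The paper instead splits on a parameter $\kappa$: for $k_1>\kappa n$ a direct second-order van der Corput bound (Lemma~\ref{lem:high diagonal L2}) gives $\|\cH^w_{n,k_1,k_2}\|_{p\to p}\lesssim 2^{-(n+k_2)/p}$, and one trades $k_2$-decay for $n$-decay; for $k_1\le\kappa n$ one rescales $(\xi,\eta,w)$ by $2^{k_2}$ into a bounded region before applying local smoothing, accepting a loss $2^{M\kappa n}$ from the $(W\gamma^{-1})^{M_\circ}$ factor in Theorem~\ref{thm: general propagator}, then absorbing it via the choice $\kappa=\varepsilon(p)/(2M)$.

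Your treatment of the coalescence region is also inadequate. Cubic van der Corput alone gives $|m_n^w|\lesssim 2^{-n/3}$, hence an $L^2$ bound of only $2^{-n/3}$, which falls short of the target $2^{-n/p-n\varepsilon(p)}$ for $p$ near $2$. The paper (Lemma~\ref{lem:L2 smoothing}) decomposes dyadically in $|\Delta|\sim 2^{-2\ell}$ and uses that this forces $w$ into an interval of length $O(2^{-2\ell})$; the $L^2_w$ gain combines with a second-order van der Corput bound (since $|\phi''(t_\pm)|=2\sqrt\Delta\sim 2^{-\ell}$) and interpolation against kernel estimates to produce $2^{-n/p-\ell(5/p-2)}$, summable precisely in the range $p<5/2$. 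Relatedly, your remark that the Nikodym determinant is ``checked by a finite calculation'' elides the explicit formula $\det N_\pm=-\tfrac{3^7}{2^4}\,\xi\,t_\pm^8/\Delta^5$: while $|\xi|$ is bounded below on $\cB_{\diag}$, the $\Delta^{-5}$ feeds into the derivative bounds $W$ in \eqref{eq: phase/amp derivs}, and tracking this loss is exactly what drives the $\kappa$-balancing above.
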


Both Proposition~\ref{prop:Lp smoothing low} and Proposition~\ref{prop:Lp smoothing diag} rely on the local smoothing estimates from Theorem~\ref{thm: general propagator}. We postpone the proofs until \S\ref{sec:loc smoothing}.




\subsection{Proof of Theorem~\ref{thm:key}}

With this in hand, we can now prove the key estimate for the proof of Theorem \ref{thm:main}.

\begin{proof}[Proof of Theorem~\ref{thm:key} (assuming Proposition~\ref{prop:L2 non-stationary}, \ref{prop:Lp smoothing low} and \ref{prop:Lp smoothing diag})] We treat each of the three estimates separately, and further subdivide into cases depending on the values of $k_1$, $k_2$. In each case we highlight whether the \textbf{non-stationary} estimates from Proposition~\ref{prop:L2 non-stationary} or the \textbf{local smoothing} estimates from Proposition~\ref{prop:Lp smoothing low} or Proposition~\ref{prop:Lp smoothing diag} are used.\medskip

\noindent \underline{Proof of \eqref{eq:low freq eta}.} We consider two separate regimes:\medskip

\noindent \textit{Case 1: $[k_1 > 8]$.} \textbf{Non-stationary.} We apply Proposition~\ref{prop:L2 non-stationary} i) with $k_2 := -C_2$, noting $(k_1, k_2) \in \cG_1$. Integrating the resulting bounds over $1 \leq w \leq 8$, we obtain a strong form of \eqref{eq:low freq eta}, valid for all $2 \leq p < \infty$ with rapid decay in $2^{-n}$ and $2^{-k_1}$. \medskip

\noindent \textit{Case 2: $[-C_1 < k_1 \leq 8]$.} \textbf{Local smoothing.} We apply Proposition~\ref{prop:Lp smoothing low} i). Since $k_1$ is bounded, we do not require any decay in $2^{-k_1}$.\medskip

    \noindent \underline{Proof of \eqref{eq:low freq xi}.} We consider two separate regimes:\medskip
    
\noindent \textit{Case 1: $[k_2 > 8]$.} \textbf{Non-stationary.} We apply Proposition~\ref{prop:L2 non-stationary} ii) with $k_1 := -C_1$, noting $(k_1, k_2) \in \cG_2$. Integrating the resulting bounds over $1 \leq w \leq 8$, we obtain a strong form of \eqref{eq:low freq xi}, valid for all $2 \leq p < \infty$ with rapid decay in $2^{-n}$ and $2^{-k_2}$.\medskip
    
\noindent \textit{Case 2: $[-C_2 < k_2 \leq 8]$.} \textbf{Local smoothing.} We apply Proposition~\ref{prop:Lp smoothing low} ii). Since $k_2$ is bounded, we do not require any decay in $2^{-k_2}$. \medskip

    \noindent \underline{Proof of \eqref{eq:high freq}.} We consider three separate regimes: \medskip

\noindent \textit{Case 1a: $[(k_1, k_2) \in \cG_1]$.} \textbf{Non-stationary.} We apply Proposition~\ref{prop:L2 non-stationary} i), as in Case 1 of the proof of \eqref{eq:low freq eta}. Since $k_1 > k_2 + 8$, we can trade $2^{-k_1}$ decay for $2^{-k_2}$ decay in this regime.\medskip

\noindent \textit{Case 1b: $[(k_1, k_2) \in \cG_2]$.} \textbf{Non-stationary.} We apply Proposition~\ref{prop:Lp smoothing low} ii), as in Case 1 of the proof of \eqref{eq:low freq xi}. Since $k_2 > k_1 + 8$, we can trade $2^{-k_2}$ for $2^{-k_1}$ decay in this regime.\medskip

\noindent \textit{Case 2: $[(k_1, k_2) \in \cB_{\diag}]$.} \textbf{Local smoothing.} We apply Proposition~\ref{prop:Lp smoothing diag}. \medskip

\noindent This completes the proof of \eqref{eq:high freq}.
\end{proof}

It remains to prove Proposition~\ref{prop:L2 non-stationary}, \ref{prop:Lp smoothing low} and \ref{prop:Lp smoothing diag}.




\subsection{Further decomposition}\label{subsec:further dec} Before turning to the main arguments, we carry out further reductions regarding Proposition~\ref{prop:Lp smoothing diag}. Here we consider two separate cases depending on the size of $n$. 

Let $\kappa > 0$ be a fixed parameter to be determined. \medskip

\noindent \textit{Case 1: $[k_1 > \kappa n]$}. In this case, Proposition~\ref{prop:Lp smoothing diag} is a consequence of the following two elementary lemmas. 

\begin{lemma}\label{lem:high diagonal L2}
    Let $2 \leq p < \infty$ and $n \in \N$. If $(k_1, k_2) \in \Z^2$ and $k_2 \geq 8$, then 
    \begin{equation*}
       \sup_{1 \leq w \leq 8} \| \cH^w_{n,k_1,k_2}\|_{L^p(\R^2) \to L^p(\R^2)} \lesssim_p 2^{-n/p - k_2/p}.
    \end{equation*} 
\end{lemma}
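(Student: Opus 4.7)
The plan is to prove two endpoint bounds and interpolate: an $L^2 \to L^2$ bound of size $2^{-n/2-k_2/2}$ via Plancherel and van der Corput, and a uniform $L^\infty \to L^\infty$ bound of size $O(1)$ via an explicit kernel computation. Riesz--Thorin interpolation at $\theta = 2/p$ will then yield the claimed $L^p\to L^p$ bound $2^{-n/p-k_2/p}$, with all constants uniform in $w\in[1,8]$.

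For the $L^2$ estimate, by Plancherel it suffices to bound $\|m_{n,k_1,k_2}^w\|_\infty$. Since $k_2\ge 8$, on the support of $\beta(2^{-k_2}\eta)$ we have $|\eta|\ge 2^{k_2-1}\ge 2^{7}$, whereas $|6tw|\le 96<2^{7}$ on the $(t,w)$-support. Thus
\begin{equation*}
    |\partial_t^2\phi^w_{\xi,\eta}(t)| = |{-2\eta+6tw}| \gtrsim 2^{k_2}
\end{equation*}
uniformly over all relevant parameters. The second-derivative van der Corput lemma applied to the oscillatory integral \eqref{eq: multiplier 1}, with oscillation parameter $2^n$ and smooth compactly supported amplitude $\beta(t)/t$, then yields
\begin{equation*}
    |m^w_{n,k_1,k_2}(\xi,\eta)| \lesssim (2^{n}\cdot 2^{k_2})^{-1/2} = 2^{-n/2-k_2/2},
\end{equation*}
uniformly in $(\xi,\eta)\in\R^2$, $k_1\in\Z$ and $w\in[1,8]$.

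For the $L^\infty$ estimate, I would invert the Fourier transform defining $m_n^w$ by integrating out $\xi$ and $\eta$ against the plane waves; this collapses the $(\xi,\eta)$-integrals into delta distributions supported on the dilated parabola and reveals that
\begin{equation*}
    \cH^w_n f(x,y) = \int_\R f\bigl(x-t,\, y-2^{-n}t^2\bigr)\, \frac{\beta(2^{-n}t)}{t}\, e^{i 2^{-2n} w t^3}\ud t.
\end{equation*}
Since $|t|\sim 2^n$ on the support of $\beta(2^{-n}\cdot)$, the $L^1$-norm of the weight $\beta(2^{-n}t)/t$ is $O(1)$ uniformly in $n,w$, so $\cH_n^w$ is convolution with a measure of bounded total variation and hence $L^\infty$-bounded with operator norm $O(1)$. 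Composing with the Littlewood--Paley projections $P_{k_1}^{(1)}$ and $P_{k_2}^{(2)}$, each of which is convolution with a Schwartz function (in one variable, tensored against a delta in the other) of $L^1$-norm $O(1)$, yields $\|\cH_{n,k_1,k_2}^w\|_{L^\infty\to L^\infty}\lesssim 1$, uniformly in all parameters.

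There is no substantive obstacle in either step: the content of the lemma is entirely the observation that the hypothesis $k_2\ge 8$ forces $\eta$ to dominate in $\partial_t^2\phi^w_{\xi,\eta}$, producing honest second-order non-degeneracy at scale $2^{k_2}$. Notably, no information about $k_1$ is needed or used, which is consistent with the statement.
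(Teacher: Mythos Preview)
Your proof is correct and follows essentially the same strategy as the paper: the $L^2$ bound via Plancherel and second-order van der Corput (the paper's Lemma~\ref{lem:vdc multiplier 1}), an $L^\infty$ bound via an $L^1$-kernel estimate (the paper's Lemma~\ref{lem:kernel}), and Riesz--Thorin interpolation. The only cosmetic difference is that for the $L^\infty$ endpoint you write $\cH_n^w$ explicitly as an average along the curve and then compose with the one-dimensional Littlewood--Paley projections, whereas the paper applies integration-by-parts to the full kernel $\mathcal{F}^{-1}[m_{n,k_1,k_2}^w]$ at once; these are equivalent.
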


In the case $k_2 < 8$ we have the following surrogate.

\begin{lemma}\label{lem:high diagonal L2 surrogate}
    Let $2 \leq p \leq \infty$, $n \in \N$ and $(k_1, k_2) \in \Z^2$. Then 
    \begin{equation*}
    \sup_{1 \leq w \leq 8} \| \cH^w_{n,k_1,k_2}\|_{L^p(\R^2) \to L^p(\R^2)} \lesssim_p 1.
    \end{equation*}
\end{lemma}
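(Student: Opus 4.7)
The plan is to establish uniform $L^p$-boundedness for $2 \leq p \leq \infty$ by interpolating between trivial $L^2$ and $L^\infty$ endpoint estimates, together with the uniform $L^p$-boundedness of Littlewood--Paley projections. Since the lemma asks for no decay in any parameter, this reduces to completely elementary considerations.

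First, I would bound the symbol pointwise. The amplitude $a(t) = \beta(t)/t$ is bounded and supported on $\{1/2 \le |t| \le 2\}$, so the defining formula \eqref{eq: multiplier 1} yields $|m_n^w(\xi,\eta)| \le \|a\|_{L^1(\R)} \lesssim 1$ uniformly in $n \in \N$, $1 \leq w \leq 8$, and $(\xi,\eta) \in \widehat{\R}^2$. Plancherel's theorem then gives $\|\cH_n^w\|_{L^2\to L^2}\lesssim 1$.

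Next, I would pass to physical space via Fourier inversion to write
\begin{equation*}
    \cH_n^w f(x,y) = \int_\R f(x - 2^n t, \, y - 2^n t^2) \, e^{i 2^n w t^3} \, a(t) \, \ud t,
\end{equation*}
valid a priori for $f \in \cS(\R^2)$ but extending to $f \in L^\infty(\R^2)$ since the integrand is supported in a bounded range of $t$. Taking absolute values inside the integral gives $\|\cH_n^w\|_{L^\infty\to L^\infty}\lesssim \|a\|_{L^1(\R)} \lesssim 1$. Riesz--Thorin interpolation between these two estimates yields $\|\cH_n^w\|_{L^p\to L^p}\lesssim 1$ for all $2\leq p\leq \infty$, uniformly in $n$ and $w$.

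Finally, each $P_{k_i}^{(i)}$ is a convolution (in the $i$th coordinate) with a rescaled dilate of a fixed Schwartz function, hence $L^p$-bounded with a constant independent of $k_i$ for all $1 \leq p \leq \infty$. Composing these projections with the bound on $\cH_n^w$ gives the claimed estimate on $\cH^w_{n,k_1,k_2}$. I do not anticipate any real obstacle here: the role of this lemma is merely to serve as a surrogate for Lemma~\ref{lem:high diagonal L2} in the regime $k_2 < 8$, where no favourable decay in $k_2$ can be extracted and one must settle for a clean uniform bound.
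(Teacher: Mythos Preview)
Your proof is correct. The argument is sound at every step: the pointwise multiplier bound gives $L^2$, the physical-space representation gives $L^\infty$, interpolation fills in $2\le p\le\infty$, and composing with the uniformly $L^p$-bounded Littlewood--Paley projections handles the frequency localisation.

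The paper takes a different route. It first proves a kernel estimate (Lemma~\ref{lem:kernel}) showing $\|\mathcal{F}^{-1}[m_{n,k_1,k_2}^w]\|_{L^1(\R^2)}\lesssim 1$ via integration by parts in the $(\xi,\eta)$-integral, and then invokes Young's convolution inequality to obtain $L^p\to L^p$ boundedness for all $1\le p\le\infty$ in one stroke. Your approach is arguably more elementary for this specific lemma, since it avoids any kernel computation and factors the operator as $\cH_n^w\circ P_{k_1}^{(1)}P_{k_2}^{(2)}$, bounding each piece trivially. The paper's route, however, yields the $L^1$ kernel bound as a reusable ingredient: that same Lemma~\ref{lem:kernel} is invoked elsewhere (in the proofs of Proposition~\ref{prop:L2 non-stationary} and Lemma~\ref{lem:high diagonal L2}) as the $L^\infty$ endpoint for interpolation. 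So the paper pays a small upfront cost for a tool it needs anyway, whereas your argument is self-contained but specific to this lemma.
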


Lemma~\ref{lem:high diagonal L2} is a consequence of elementary $L^2$ arguments, based on van der Corput's lemma, whilst Lemma~\ref{lem:high diagonal L2 surrogate} follows from crude kernel estimates. We postpone the proofs until \S\S\ref{sec:kernel}-\ref{sec:L2 theory}.\medskip

\noindent \textit{Case 2: $[k_1 \leq \kappa n]$}. Here we perform a further decomposition and follow a similar strategy to that of Pramanik--Seeger \cite{PS2007} in the context of maximal estimates for averaging operators over curves in $\R^3$: $L^2$-estimates when the Fourier decay of the multiplier is slowest (since this only happens in a localised region, one can get a gain in the $L^\infty$ estimates), and $L^p$-local smoothing estimates for $p>2$ when the decay is essentially $|\xi|^{-1/2}$ (this happens for a large portion of the frequency space, so no $L^\infty$ gain is available and $L^2$-methods fall short for summability).

We decompose
\begin{equation*}
    m_{n,k_1,k_2}^w= m_{n,k_1,k_2}^{w, \leq \kappa} + m_{n,k_1,k_2}^{w, > \kappa}
\end{equation*}
where
\begin{equation*}
    m_{n,k_1,k_2}^{w, \leq  \kappa} (\xi,\eta):= m_{n,k_1,k_2}^w (\xi,\eta)\beta_0(2^{-2k_2+2\floor{n\kappa}}\Delta(w;\xi,\eta));
\end{equation*}
here $\Delta(w;\xi,\eta)$ is as in \eqref{240612e6_9uuu}. 
Let $\cH_{n,k_1,k_2}^{w,\leq \kappa}$ and $\cH_{n,k_1,k_2}^{w,> \kappa}$ the associated multiplier operators.\medskip

\noindent \textit{Case 2a: $[k_1 \leq \kappa n;\,\, \cH_{n,k_1,k_2}^{w,\leq \kappa}]$}. For the operator $\cH_{n,k_1,k_2}^{w,\leq \kappa}$ we use $L^2$-methods to obtain the following smoothing estimate.

\begin{proposition}\label{prop:comparable L2 smoothing}
Let $2 \leq p < 5/2$ and $n \in \N$. If $(k_1, k_2) \in \cB_{\diag}$ satisfies $k_1 \leq \kappa n$, then
\begin{equation*}
    \Big( \int_1^8 \| \cH_{n,k_1,k_2}^{w, \leq \kappa} f \|_{L^p(\R^2)}^p \ud w \Big) \lesssim_p 2^{-n/p - \kappa n(5/p - 2)} \| f \|_{L^p(\R^2)}.
\end{equation*}
\end{proposition}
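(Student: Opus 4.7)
The plan is to interpolate between an $L^2_wL^2_{xy}$ local smoothing bound and a fixed-$w$ $L^\infty$ bound. A preliminary observation: on the support of $m_{n,k_1,k_2}^{w,\leq\kappa}$ the Littlewood--Paley cutoffs impose $|\xi|\sim 2^{k_1}$, $|\eta|\sim 2^{k_2}$, while $|\eta^2+3w\xi|\lesssim 2^{2k_2-2\lfloor n\kappa\rfloor}$ forces $2^{k_1}\sim 2^{2k_2}$; combined with the diagonal constraint $|k_1-k_2|\lesssim 1$, this yields $k_1,k_2=O(1)$. Outside this regime the multiplier vanishes and the bound is trivial, so we may work under $|\xi|,|\eta|\sim 1$.

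The first step is to prove the $L^2$ local smoothing bound
\[
\Big(\int_1^8\|\cH_{n,k_1,k_2}^{w,\leq\kappa}f\|_{L^2}^2\,\ud w\Big)^{1/2}\lesssim 2^{-n/2-\kappa n/2}\|f\|_{L^2},
\]
which by Plancherel reduces to $\sup_{\xi,\eta}\int_1^8|m_{n,k_1,k_2}^{w,\leq\kappa}(\xi,\eta)|^2\,\ud w\lesssim 2^{-n-\kappa n}$. The cubic phase $\phi^w_{\xi,\eta}(t)=-t\xi-t^2\eta+t^3w$ satisfies $\phi'''=6w\sim 1$, so van der Corput yields $|m_n^w|\lesssim 2^{-n/3}$ uniformly; moreover $|\phi''(t_\pm)|=2\sqrt{|\Delta|}$, so non-degenerate stationary phase improves this to $|m_n^w|\lesssim 2^{-n/2}|\Delta|^{-1/4}$ whenever $|\Delta|\gtrsim 2^{-2n/3}$. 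Using $|\partial_w\Delta|=3|\xi|\sim 1$ to pass to the $\Delta$-variable (restricted to $|\Delta|\lesssim 2^{-2\kappa n}$) and splitting at $|\Delta|=2^{-2n/3}$ gives the claim via a short case analysis on whether $\kappa\geq 1/3$ (entire support in the Airy regime) or $\kappa<1/3$ (both regimes present).

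The second step is the $L^\infty$ bound $\sup_w\|\cH_{n,k_1,k_2}^{w,\leq\kappa}f\|_{L^\infty}\lesssim 2^{\kappa n}\|f\|_{L^\infty}$. Factoring $\cH_{n,k_1,k_2}^{w,\leq\kappa}=M^{\mathrm{mult}}\circ\cH_n^w$, the operator $\cH_n^w$ is trivially bounded on $L^\infty$ with norm $\lesssim\|a\|_1=O(1)$; the multiplier $M^{\mathrm{mult}}$ has symbol supported on a parabolic tube of width $2^{-2\lfloor n\kappa\rfloor}$. Performing the change of variables $\zeta=\eta^2+3w\xi$ decouples the tube constraint from the $\eta$-integral and leads to a chirp $e^{-ix\eta^2/(3w)}\chi(\eta)$ whose Fourier inverse in $y$ has $L^1_y$-norm of order $|x|^{1/2}$; integrating this against the rapidly decaying envelope in $|x|\lesssim 2^{2\kappa n}$ yields $\|K^{\mathrm{mult}}\|_{L^1}\lesssim 2^{\kappa n}$.

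Interpolating the $L^2$ and $L^\infty$ bounds yields, for $p\in[2,\infty]$,
\[
\Big(\int_1^8\|\cH_{n,k_1,k_2}^{w,\leq\kappa}f\|_p^p\,\ud w\Big)^{1/p}\lesssim 2^{-n/p+\kappa n(1-3/p)}\|f\|_p,
\]
which is at least as strong as the target $2^{-n/p-\kappa n(5/p-2)}$ since $1-3/p\leq 2-5/p$ for $p\geq 2$. The main obstacle will be the stationary phase analysis in the first step: at fixed $w$ one only has the Airy bound $|m_n^w|\lesssim 2^{-n/3}$, which cannot yield the required $L^2$ decay by itself. The decisive gain must come from the $w$-integration, and care is needed to smoothly interpolate between the Airy regime (where the critical points $t_\pm$ of $\phi^w_{\xi,\eta}$ nearly coalesce as $\Delta\to 0$) and the non-degenerate stationary phase regime, exploiting that the measure of $\{w\in[1,8]:|\Delta(w;\xi,\eta)|\lesssim 2^{-2\kappa n}\}$ is only of order $2^{-2\kappa n}$.
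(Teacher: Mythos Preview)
Your approach is correct and in fact yields a stronger bound than stated. The paper proceeds differently: it splits into the cases $k_2>8$ and $k_2\leq 8$ and, in the latter, performs a further dyadic decomposition $m^{w,\leq\kappa}=\sum_{\ell=\lfloor\kappa n\rfloor}^{\lfloor n/3\rfloor}m^{w,\ell}$ according to the size of $\Delta$. For each piece it proves an $L^2$ bound $2^{-(n+\ell)/2}$ (van der Corput plus the $w$-support having measure $O(2^{-2\ell})$) and an $L^\infty$ bound $2^{2\ell}$ via the crude kernel estimate $|\check m^{w,\ell}(x,y)|\lesssim 2^{-2\ell}(1+2^{-2\ell}|x|+2^{-2\ell}|y|)^{-N}$, then interpolates and sums in $\ell$.

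Your argument avoids the $\ell$-decomposition entirely: you integrate the pointwise multiplier bound directly in the $\Delta$-variable for the $L^2$ step, and for the $L^\infty$ step you obtain the sharper kernel bound $\|K^{\mathrm{mult}}\|_{L^1}\lesssim 2^{\kappa n}$ by exploiting the curvature of the parabola $\{\Delta=0\}$ (the chirp $\eta\mapsto e^{-ix\eta^2/(3w)}$ contributes $|x|^{1/2}$ rather than $|x|$ to the $L^1_y$-norm). This refinement is what produces your improved exponent $1-3/p$ in place of the paper's $2-5/p$. Both routes are valid; yours is more direct and the extra gain is genuine, though not needed downstream. One small remark on your preliminary observation: the implication $2^{k_1}\sim 2^{2k_2}$ from the $\Delta$-localisation requires $\kappa n$ to be at least a large constant; when $\kappa n=O(1)$ the conclusion $k_1,k_2=O(1)$ instead follows directly from $k_1\leq\kappa n$ together with the diagonal constraint, so the reduction still holds.
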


To prove Proposition~\ref{prop:comparable L2 smoothing}, we argue slightly differently depending on whether $k_2>8$ or $k_2 \leq 8$. In particular, we reduce matters to proving Lemma~\ref{lem:L2 smoothing} below. 

Suppose $(k_1, k_2) \in \cB_{\diag}$ with $k_2 \leq 8$. In this case, we further decompose
\begin{equation*}
    m_{n,k_1,k_2}^{w, \leq \kappa} = \sum_{ \ell = \floor{\kappa n}}^{\floor{n/3}} m_{n,k_1,k_2}^{w,\ell}
\end{equation*}
where
\begin{equation*}
    m_{n,k_1,k_2}^{w, \ell} (\xi,\eta):= \begin{cases}
    m_{n,k_1,k_2}^w(\xi,\eta) \beta(2^{-2k_2+2\ell} \Delta(w;\xi,\eta)) \qquad & \text{ if $ \floor{\kappa n} \leq  \ell < \floor{n/3}$} \\
    m_{n,k_1,k_2}^w(\xi,\eta) \beta_0(2^{-2k_2+2\ell} \Delta(w;\xi,\eta)) \qquad & \text{ if $ \ell = \floor{n/3}$}
    \end{cases}.
\end{equation*}

The rationale of this decomposition is that  $\Delta(w; \xi, \eta)$ quantifies the simultaneous vanishing of the first and second-order derivatives of the phase function $\phi^w_{\xi,\eta}$. 

Denoting by $\cH^{w, \ell}_{n,k_1,k_2}$ the associated multiplier operators, the main estimate for Case 2a reads as follows. 

\begin{lemma}\label{lem:L2 smoothing} Let $2 \leq p < \infty$, $n \in \N$ and $(k_1, k_2) \in \cB_{\diag}$.
\begin{enumerate}[i)]
    \item If $k_2 > 8$ and $k_1 \leq \kappa n$, then
        \begin{equation*}
            \Big( \int_1^8 \| \cH^{w, \leq \kappa}_{n,k_1,k_2} f \|_{L^p(\R^2)}^p \ud w \Big)^{1/p} \lesssim_p 2^{-n/p- \kappa n(5/p - 2) - k_2/p} \| f \|_{L^p(\R^2)}.
        \end{equation*}
    \item If $k_2 \leq 8$ and $\floor{\kappa n} \leq \ell \leq \floor{n/3}$, then
    \begin{equation}\label{eq:L2 ell dec norm}
        \Big(\int_1^8 \|\cH_{n,k_1,k_2}^{w,\ell} f\|_{L^p(\R^2)}^p \ud w \Big)^{1/p} \lesssim_p 2^{-n/2 - \ell(5/p - 2)} \|  f\|_{L^p(\R^2)}.
    \end{equation}
\end{enumerate}
\end{lemma}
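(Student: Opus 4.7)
My plan is to derive $L^2$ estimates via Plancherel combined with an analysis of the multiplier $m^w_n(\xi,\eta) = \int e^{i 2^n \phi^w_{\xi,\eta}(t)}\frac{\beta(t)}{t}\,\ud t$ via stationary phase or van der Corput's lemma, and then promote these to $L^p$ by interpolation, supplemented by a Bernstein-type argument in the harder case.

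The phase $\phi^w_{\xi,\eta}(t) = -t\xi - t^2\eta + t^3 w$ has critical points $t_\pm = (\eta \pm \sqrt{\Delta})/(3w)$ with $|\phi''(t_\pm)| = 2\sqrt{|\Delta|}$. In Part i), $k_2 > 8$ forces $|t_\pm| \sim 2^{k_2}$ outside $\supp a$ and $|\phi''| \gtrsim 2^{k_2}$ on $\supp a$; van der Corput then yields $|m^w_n(\xi,\eta)| \lesssim 2^{-n/2 - k_2/2}$. In Part ii), $k_2 \leq 8$, and on the $\ell$-localized region $|\Delta| \sim 2^{-2\ell}$, so stationary phase at the non-degenerate critical points yields $|m^w_n(\xi,\eta)| \lesssim 2^{-n/2 + \ell/2}$. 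Coupling these pointwise bounds with the measure estimate $|\{w \in [1,8] : |\Delta(w;\xi,\eta)| \text{ in the given range}\}| \lesssim (\text{length of range})/|\xi|$, which follows from $\partial_w\Delta = 3\xi$ and $|\xi| \sim 2^{k_1}$, Plancherel delivers the $L^2$ bounds $2^{-n/2 - \kappa n/2 - k_2/2}$ for Part i) (after using $|k_1 - k_2| \leq C$ on $\cB_{\diag}$ and $k_2 \leq k_1 + C \leq \kappa n + C$) and $2^{-n/2 - \ell/2}$ for Part ii), both matching the claimed bounds at $p = 2$.

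To reach general $p \in (2, \infty)$, I would interpolate the $L^2$ bound with the trivial estimate $\|\cH^{w,\cdot}\|_{L^\infty\to L^\infty} \lesssim 1$ (immediate from the kernel representation $\cH^w_n f(x,y) = \int (\beta(t)/t) e^{i 2^n t^3 w} f(x - 2^n t, y - 2^n t^2)\,\ud t$). For Part i), this directly yields the target, since the resulting Riesz--Thorin exponent $2^{-n/p - \kappa n/p - k_2/p}$ dominates $2^{-n/p - \kappa n(5/p - 2) - k_2/p}$ thanks to $1/p \geq 5/p - 2$ for $p \geq 2$. For Part ii), interpolation misses the target decay $2^{-n/2 - \ell(5/p - 2)}$ when $p > 2$, so I would additionally invoke a Bernstein-type inequality exploiting the narrow Fourier support of $m^{w,\ell}$: for each fixed $w$, this multiplier is supported in a frequency rectangle of Lebesgue measure $\sim 2^{-2\ell}$ (from intersecting $|\Delta|\sim 2^{-2\ell}$ with $\{|\xi|,|\eta|\sim 1\}$), giving $\|\cH^{w,\ell}f\|_{L^p} \lesssim 2^{-\ell(1 - 2/p)}\|\cH^{w,\ell}f\|_{L^2}$ pointwise in $w$. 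Combining this with a non-trivial $L^4$-type endpoint obtained from an oscillatory integral estimate for the underlying Schr\"odinger-type propagator (in the spirit of Theorem~\ref{thm: general propagator}) would recover the stated $L^p$ decay in the mixed-norm space $L^p(\R^2 \times [1,8])$ for $p \in [2, 5/2)$.

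The main obstacle will be this final $L^p$ step in Part ii) for $p > 2$: the elementary $L^2$--$L^\infty$ Riesz--Thorin interpolation is insufficient, and closing the gap requires extracting a genuine $L^p \to L^p$ gain from the thin rectangular frequency support of $m^{w,\ell}$, combining it with a non-trivial $L^p$-endpoint, and marrying both with the $w$-integration in a lossless manner within the mixed-norm framework.
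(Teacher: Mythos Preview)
Your $L^2$ analysis via Plancherel, van der Corput, and the $w$-support bound is correct and matches the paper's Lemma~\ref{lem:vdc multiplier 2}. The substantive gap is your $L^\infty$ endpoint. The kernel formula you wrote is that of the \emph{unlocalised} operator $\cH^w_n$; the operators in the lemma carry an extra $w$-dependent Fourier cutoff in $\Delta(w;\xi,\eta)$, and this cutoff is \emph{not} uniformly bounded on $L^\infty$. Since $\partial_\xi\Delta = 3w$ and $\partial_\eta\Delta = 2\eta$ are both of unit size on the relevant support, the cutoff $\beta(2^{-2k_2+2\ell}\Delta)$ has $\xi$- and $\eta$-derivatives of order $2^{2\ell}$, and an elementary integration-by-parts computation (Lemma~\ref{lem:kernel est ell} in the paper) shows that $\|\mathcal{F}^{-1}[m^{w,\leq\kappa}_{n,k_1,k_2}]\|_{L^1} \lesssim 2^{2\floor{\kappa n}}$ and $\|\mathcal{F}^{-1}[m^{w,\ell}_{n,k_1,k_2}]\|_{L^1} \lesssim 2^{2\ell}$. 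So the correct $L^\infty \to L^\infty$ bounds are $2^{2\floor{\kappa n}}$ and $2^{2\ell}$, not $O(1)$.

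With these (lossy) $L^\infty$ bounds in hand, plain Riesz--Thorin between $p=2$ and $p=\infty$ finishes \emph{both} parts directly: for ii), interpolating $2^{-(n+\ell)/2}$ against $2^{2\ell}$ yields $2^{-n/p - \ell(5/p-2)}$, which is precisely what is summed in the proof of Proposition~\ref{prop:comparable L2 smoothing} (the exponent $-n/2$ printed in the statement should read $-n/p$; the paper's own interpolation delivers the latter, and only the latter is used downstream). Your proposed detour through Bernstein inequalities and Schr\"odinger-propagator local smoothing is therefore unnecessary---and somewhat at odds with the role of this lemma, whose whole point is to handle elementarily the piece where Theorem~\ref{thm: general propagator} is \emph{not} invoked.
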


This is essentially a refined version of Proposition~\ref{prop:comparable L2 smoothing}.

\begin{proof}[Proof of Proposition~\ref{prop:comparable L2 smoothing} (assuming Lemma~\ref{lem:L2 smoothing})] This is immediate, summing the estimate \eqref{eq:L2 ell dec norm} in $\ell$ for the $k_2 \leq 8$ case.
\end{proof}

Lemma~\ref{lem:L2 smoothing} is a consequence of elementary $L^2$ arguments, based on van der Corput's lemma. Here we exploit the fact that the additional cutoff localises to a small $w$-interval. We postpone the proof until \S\ref{sec:L2 theory}.\medskip

\noindent \textit{Case 2b: $[k_1 \leq \kappa n;\,\, \cH_{n,k_1,k_2}^{w,> \kappa}]$.} The analysis of the operator $\cH_{n,k_1,k_2}^{w,> \kappa}$ is more subtle, since $L^2$-methods alone do not suffice to prove a satisfactory bound. The main estimate reads as follows. 

\begin{proposition}\label{prop:comparable Lp smoothing}  There exists some $M \in \N$ such that the following holds. For all $2< p <4$ there exists $\varepsilon (p) > 0$ such that if $n \in \N$, $0 < \kappa < 1$ and $(k_1, k_2) \in \cB_{\diag}$ satisfy $k_1 \leq \kappa n$, then
    \begin{equation}\label{eq: comparable Lp smoothing}
        \Big( \int_1^8 \| \cH^{w, > \kappa}_{n,k_1,k_2} f \|_{L^p(\R^2)}^p \ud w \Big)^{1/p} \lesssim_p 2^{M\kappa n} 2^{-n/p- n \varepsilon(p)} \| f \|_{L^p(\R^2)}.
    \end{equation}
\end{proposition}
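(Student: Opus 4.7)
\textit{Overall strategy.} On the support of $m^{w,>\kappa}_{n,k_1,k_2}$, the condition $\Delta(w;\xi,\eta) \gtrsim 2^{2k_2 - 2\floor{\kappa n}}$ guarantees that the phase $\phi^w_{\xi,\eta}$ has two real, well-separated critical points $t_\pm$ with $|(\phi^w_{\xi,\eta})''(t_\pm)| = 2\sqrt{\Delta} \gtrsim 2^{k_2 - \kappa n}$. The plan is to apply stationary phase at each critical point to express the operator (modulo a rapidly decaying error) as $2^{-n/2}$ times a sum of two variable Schr\"odinger propagators, rescale so that they fit the framework of Theorem~\ref{thm: general propagator}, and apply that theorem on the region where its hypotheses hold; the remaining exceptional piece is treated by variable-coefficient $\ell^p$-decoupling.

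\medskip

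After a smooth partition of unity in $t$ separating neighbourhoods of $t_+$ and $t_-$, stationary phase yields
\begin{equation*}
    m^{w,>\kappa,\pm}_{n,k_1,k_2}(\xi,\eta) = 2^{-n/2}\, b_\pm(w;\xi,\eta)\, e^{i 2^n \Psi_\pm(w;\xi,\eta)} + \text{(rapidly decaying error)},
\end{equation*}
where $\Psi_\pm(w;\xi,\eta) := \phi^w_{\xi,\eta}(t_\pm(w;\xi,\eta))$ is the critical value and $b_\pm$ is a symbol whose derivatives are controlled by powers of $|(\phi^w_{\xi,\eta})''(t_\pm)|^{-1}$, and therefore by powers of $2^{\kappa n}$. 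Since $(k_1, k_2) \in \cB_{\diag}$ the frequencies $|\xi| \sim 2^{k_1}$ and $|\eta| \sim 2^{k_2}$ are comparable; rescaling $(\xi, \eta) \mapsto (2^{k_1}\xi, 2^{k_2}\eta)$ and exploiting the (near-)cubic homogeneity of $\Psi_\pm$, the resulting operator takes the shape of a variable Schr\"odinger propagator $U^\lambda[\Phi_\pm; a_\pm]$ from \eqref{eq: Schrod prop} with parameter $\lambda \sim 2^{n + 3k_1}$ and translation-invariant phase $\Phi_\pm(\bx, w;\bxi) = \bx \cdot \bxi + \lambda^{-1} 2^n \Psi_\pm(w; 2^{k_1}\xi, 2^{k_2}\eta)$.

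\medskip

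To apply Theorem~\ref{thm: general propagator}, the hypotheses must be checked on this rescaled phase. Differentiating the critical-point relation immediately gives $\partial_\xi \Psi_\pm = -t_\pm$ and $\partial_\eta \Psi_\pm = -t_\pm^2$, so the gradient map has rank one, verifying part a) of Definition~\ref{dfn: Nik non compress}. The H\"ormander condition \eqref{eq: Hormander gamma} follows because $\partial_w t_\pm$ is non-vanishing whenever $\Delta > 0$. The core technical issue is part b): that the $3 \times 3$ determinant of the higher $w$-derivatives of $\partial^2_{\bxi\bxi}\phi$ has modulus at least some $\gamma > 0$. Explicit computation reduces this determinant to a rational expression in $t_\pm$ and $w$, from which one extracts a lower bound $\gamma \gtrsim 2^{-M\kappa n}$ on the amplitude support away from an exceptional subset where the Nikodym non-compression hypothesis fails. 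On this exceptional subset we instead invoke a variable-coefficient $\ell^p$-decoupling estimate for the curves $w \mapsto (t_\pm(w;\xi,\eta), t_\pm(w;\xi,\eta)^2)$, in the spirit of \cite{BD2015, MR4078231, BHS}.

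\medskip

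\textit{Main obstacle.} The principal difficulty is the verification and quantification of the Nikodym non-compression hypothesis for $\Psi_\pm$, together with the identification and decoupling-based treatment of the exceptional region. Once this is accomplished, Theorem~\ref{thm: general propagator} delivers $\|U^\lambda f\|_{L^p} \lesssim 2^{M'\kappa n} \cdot 2^{(n + 3k_1)\beta(p)} \|f\|_{L^p}$ with $\beta(p) < 1/2$; undoing the rescaling (which introduces Jacobian factors of size $2^{2(k_1 + k_2)(1/p - 1/2)}$) and combining with the stationary-phase prefactor $2^{-n/2}$ yields \eqref{eq: comparable Lp smoothing} with $\varepsilon(p) := 1/2 - \beta(p) > 0$. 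The hypothesis $k_1 \leq \kappa n$ is essential: it ensures the $k_1$-dependent loss $2^{3k_1\beta(p)}$ from $\lambda$ is absorbed into the permitted pollution $2^{M\kappa n}$ for a sufficiently large absolute $M$, so that the local smoothing gain $2^{-n(1/2 - \beta(p))}$ dominates once $\kappa$ is taken small enough (a final constraint to be enforced when combining this proposition with Proposition~\ref{prop:comparable L2 smoothing}).
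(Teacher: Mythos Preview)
Your overall strategy matches the paper's: stationary phase at $t_\pm$, then recognise the main term as a variable Schr\"odinger propagator and invoke Theorem~\ref{thm: general propagator}, with $\varepsilon(p)=1/2-\beta(p)$. But several of your details are wrong, and you have missed a simplification.

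\textbf{Rescaling.} The critical value $\Psi_\pm$ is jointly degree-\emph{one} homogeneous in $(\xi,\eta,w)$ (since $\phi^w_{\xi,\eta}(t)$ is linear in those variables), not ``near-cubic''. The paper rescales $(\xi,\eta,w)\mapsto 2^{k_2}(\xi,\eta,w)$, which gives $\lambda=2^{n+k_2}$; your $\lambda\sim 2^{n+3k_1}$ has no basis. Also, a frequency dilation of a Fourier multiplier does not change its $L^p\to L^p$ operator norm, so there is no Jacobian factor $2^{2(k_1+k_2)(1/p-1/2)}$. These errors happen to be harmless because every spurious factor is $2^{O(\kappa n)}$ (using $k_1,k_2\lesssim \kappa n$ on $\cB_{\diag}$) and absorbed into the allowed $2^{M\kappa n}$, but you should straighten this out.

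\textbf{Source of the loss and the exceptional set.} After rescaling to $|\xi|,|\eta|,w\sim 1$, an explicit computation (the paper carries this out) gives
\[
\det \partial_w\partial^2_{\bxi\bxi}\phi_\pm \;=\; -\tfrac{9}{4}\,\frac{t_\pm^4}{\Delta^2},
\qquad
\det N_\pm \;=\; -\tfrac{3^7}{2^4}\,\frac{\xi\,t_\pm^8}{\Delta^5}.
\]
On the diagonal support one has $|\xi|\sim 1$, $t_\pm\sim 1$ and $\Delta\lesssim 1$, so both the H\"ormander and Nikodym parameters are $\gtrsim 1$. In particular there is \emph{no} exceptional region here: the decoupling argument is not needed for this proposition (it only enters for Proposition~\ref{prop:Lp smoothing low}~ii), where $|\xi|$ can be arbitrarily small). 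The $2^{M\kappa n}$ loss does not come from $\gamma$ but from the $W$-parameter in Theorem~\ref{thm: general propagator}: derivatives of $\phi_\pm$ and of the stationary-phase amplitude $b_\pm$ involve powers of $\Delta^{-1/2}$, and on the support of $m^{w,>\kappa}_{n,k_1,k_2}$ one only has $\Delta\gtrsim 2^{-2\kappa n}$.
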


Proposition~\ref{prop:comparable Lp smoothing} is a consequence of the local smoothing estimate from Theorem~\ref{thm: general propagator}. We postpone the proof until \S\ref{sec:loc smoothing}.

The preceding results combine to give Proposition~\ref{prop:Lp smoothing diag}.

\begin{proof}[Proof of Proposition~\ref{prop:Lp smoothing diag} (assuming Lemma~\ref{lem:high diagonal L2} and Proposition~\ref{prop:comparable L2 smoothing} and \ref{prop:comparable Lp smoothing})] Let $n \in \N$ and $(k_1, k_2) \in \cB_{\diag}$. Following the above scheme, we subdivide the argument into cases depending on the values of $k_1,k_2$ and $n$. In each case we highlight whether the \textbf{van der Corput} estimates from Lemma~\ref{lem:high diagonal L2} or Proposition~\ref{prop:comparable L2 smoothing} or the \textbf{local smoothing} estimates from Proposition~\ref{prop:comparable Lp smoothing} are used.\medskip

Fix any $2<p<5/2$ and let $M$ and $\varepsilon(p)>0$ be the parameters given by Proposition \ref{prop:comparable Lp smoothing}. Set $\kappa\equiv \kappa(p):=\varepsilon(p)/2M>0$. \medskip

\noindent \textit{Case 1a: $[k_1 > \kappa n]$}. \textbf{van der Corput.} If $k_2 \geq 8$, then we apply Lemma~\ref{lem:high diagonal L2} and integrate the resulting uniform bounds over $1 \leq w \leq 8$. Since $(k_1, k_2) \in \cB_{\diag}$, we can trade $2^{-k_2}$ decay for $2^{-k_1}$ decay. Furthermore, since $k_1 > \kappa n$, we can trade $2^{-k_1}$ decay for $2^{-n}$ decay. Thus, provided this trade is suitably executed, we can obtain an inequality of the form \eqref{eq:Lp smoothing diag} for any choice of $2 \leq p < \infty$.

On the  other hand, if $k_2 < 8$, then the conditions $(k_1, k_2) \in \cB_{\diag}$ and $\kappa n < k_1$ mean that all three parameters $k_1$, $k_2$ and $n$ are bounded. Thus, we do not require any decay in $2^{-k_1}$, $2^{-k_2}$ or $2^{-n}$, so the estimate from Lemma~\ref{lem:high diagonal L2 surrogate} suffices. \medskip

\noindent \textit{Case 2: $[k_1 \leq \kappa n]$.} It suffices to prove bounds of the form \eqref{eq:Lp smoothing diag} with $\cH_{n,k_1,k_2}^{w}$ replaced by the each of the localised operators $\cH_{n,k_1,k_2}^{w, \leq \kappa}$ and $\cH_{n,k_1,k_2}^{w, > \kappa}$. Furthermore, since $k_1 \leq \kappa n$ and $(k_1, k_2) \in \cB_{\diag}$, we can trade $2^{-n}$ decay for either $2^{-k_1}$ or $2^{-k_2}$  decay. In particular, it suffices to prove estimates which exhibit only appropriate $2^{-n}$ decay. 
\begin{itemize}
    \item \textbf{van der Corput.} For the operator $\cH_{n,k_1,k_2}^{w, \leq \kappa}$, the desired bound \eqref{eq:Lp smoothing diag} follows from Proposition~\ref{prop:comparable L2 smoothing}. 
    \item \textbf{Local smoothing.} For the operator $\cH_{n,k_1,k_2}^{w, > \kappa}$, the desired bound  \eqref{eq:Lp smoothing diag} follows from Proposition~\ref{prop:comparable Lp smoothing}, since our choice of $\kappa$ guaranteed that \eqref{eq: comparable Lp smoothing} holds with constant $2^{-n/p - n \varepsilon (p)/2}$.
\end{itemize}
This concludes the proof of \eqref{eq:Lp smoothing diag} for $\cH^{w}_{n,k_1,k_2}$ for the specific choice of $p$ and therefore for all $2 < p <5/2$.
\end{proof}




\subsection{Recap} The proof of Theorem~\ref{thm:main} is now reduced to showing the following:

\begin{itemize}
    \item Lemma~\ref{lem:low}. This follows by pointwise domination by the strong maximal function bounds. The details are presented in \S\ref{subsec:low}.
    \item Lemma~\ref{lem:high diagonal L2 surrogate}. This follows from an elementary kernel estimate  in \S\ref{sec:kernel}.
    \item Proposition~\ref{prop:L2 non-stationary}, Lemma~\ref{lem:high diagonal L2} and Lemma~\ref{lem:L2 smoothing}. These results are all based on elementary $L^2$-based theory and are treated in \S\ref{sec:L2 theory}.
     \item Proposition~\ref{prop:Lp smoothing low} and Proposition~\ref{prop:comparable Lp smoothing}. These results are deeper, relying on the local smoothing inequality Theorem~\ref{thm: general propagator}, and are treated in \S\ref{sec:loc smoothing}.
\end{itemize}




\section{Kernel estimates}\label{sec:kernel} In this section we apply elementary integration-by-parts arguments to bound the kernels of our localised operators. This will imply $L^\infty$ estimate of the corresponding operators which will be interpolated with the $L^2$-theory of \S\ref{sec:L2 theory}.

Furthermore, we also establish a kernel estimate for the low frequency piece of our operator, which implies Lemma \ref{lem:low}.

\subsection{The operators $\cH_{n,k_1,k_2}$} 
We begin with the operators $\cH_{n,k_1,k_2}^w$ introduced in \S\ref{subsec:freq loc} (together with their low frequency variants), whose kernels are given by the inverse Fourier transforms of the multipliers $m_{n,k_1,k_2}^w$ from \S\ref{subsec:critical points}. 

\begin{lemma}\label{lem:kernel}
Let $n \in \N$, $1 \leq w \leq 8$ and $k_1, k_2 \in \Z$. Then
\begin{equation*}
    \| \mathcal{F}^{-1} [m_{n, k_1, k_2} ^w ] \|_{L^1(\R^2)} \lesssim 1,
\end{equation*}
and the same holds if $m_{n,k_1,k_2}^w$ is replaced by $m_{n,\leq k_1,k_2}^w$, $m_{n,k_1,\leq k_2}^w$.
\end{lemma}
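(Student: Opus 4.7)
The plan is to exploit the crucial observation that the phase $\phi^w_{\xi,\eta}(t) = -t\xi - t^2\eta + t^3 w$ appearing in the definition of $m_n^w$ (see \eqref{eq: multiplier 1}, \eqref{eq: multiplier 2}) is \emph{linear} in $(\xi, \eta)$ for each fixed $t$. This means that, up to the factor $e^{i2^n w t^3}$, the integrand in the definition of $m_n^w$ depends on $(\xi,\eta)$ only through linear exponentials. Consequently, once we combine with the tensor product cutoff $\beta(2^{-k_1}\xi)\beta(2^{-k_2}\eta)$ and take the inverse Fourier transform in $(\xi,\eta)$, the $(\xi,\eta)$-integration decouples into two one-dimensional Fourier integrals, each producing a translated, rescaled copy of $\check{\beta}$.

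Concretely, I would first use Fubini's theorem (justified by the compact support of the $(\xi,\eta)$-cutoffs and the boundedness of $\beta(t)/t$ on $\supp\beta$) to interchange the $t$-integral with the $(\xi,\eta)$-integral in
\begin{equation*}
K(x,y) := \mathcal{F}^{-1}[m^w_{n,k_1,k_2}](x,y) = \frac{1}{(2\pi)^2} \int_{\widehat{\R}^2} e^{i(x\xi+y\eta)} \beta(2^{-k_1}\xi)\beta(2^{-k_2}\eta) m_n^w(\xi,\eta) \, \ud \xi \, \ud \eta.
\end{equation*}
Computing each of the resulting one-dimensional $\xi$- and $\eta$-integrals yields
\begin{equation*}
K(x,y) = \frac{2^{k_1+k_2}}{(2\pi)^2} \int_\R \frac{\beta(t)}{t} e^{i2^n w t^3} \check{\beta}\bigl(2^{k_1}(x - 2^n t)\bigr) \check{\beta}\bigl(2^{k_2}(y - 2^n t^2)\bigr) \, \ud t.
\end{equation*}

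From here, Minkowski's inequality moves the $L^1(\R^2)$-norm inside the $t$-integral. For fixed $t$, the change of variables $u = 2^{k_1}(x-2^n t)$, $v = 2^{k_2}(y - 2^n t^2)$ absorbs both the scaling $2^{k_1+k_2}$ and the $t$-, $n$-, $w$-dependent translations, yielding
\begin{equation*}
\|K\|_{L^1(\R^2)} \leq \frac{\|\check{\beta}\|_{L^1(\R)}^2}{(2\pi)^2} \int_\R \frac{|\beta(t)|}{|t|} \, \ud t \lesssim 1,
\end{equation*}
where the final integral is finite since $\supp \beta \subseteq [-2,-1/2]\cup[1/2,2]$ keeps $t$ away from the origin, and $\check{\beta} \in \mathcal{S}(\R) \subset L^1(\R)$. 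The bound is uniform in $n$, $k_1$, $k_2$ and $w$ because the $n$-, $w$-, $t$-dependence only appeared through the translations, which were removed by the change of variables. For the variants $m_{n,\leq k_1, k_2}^w$ and $m_{n,k_1,\leq k_2}^w$, the identical argument works verbatim after replacing the appropriate $\beta$ by $\beta_0$, since $\check{\beta}_0$ is also Schwartz.

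There is no real obstacle here: the only care needed is in justifying the Fubini interchange (routine, given the tame integrand), and in verifying that the change of variables $(x,y) \mapsto (u,v)$ is a product-type diffeomorphism with Jacobian $2^{-k_1-k_2}$ that exactly cancels the $2^{k_1+k_2}$ prefactor produced by the original Fourier inversion. The key structural input is the linearity of the phase in $(\xi,\eta)$, which is what makes this estimate essentially trivial and independent of $n$ and $w$.
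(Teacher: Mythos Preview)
Your proposal is correct and follows essentially the same approach as the paper: both exploit the linearity of $\phi^w_{\xi,\eta}(t)$ in $(\xi,\eta)$, swap the $t$- and $(\xi,\eta)$-integrals, and reduce to the $L^1$-norm of a translated, rescaled bump in $(x,y)$ integrated over $t\in\supp\beta$. The paper phrases the inner step as repeated integration-by-parts yielding the pointwise bound $2^{k_1+k_2}(1+2^{k_1}|x-2^nt|+2^{k_2}|y-2^nt^2|)^{-N}$, whereas you compute it exactly as $2^{k_1+k_2}\check\beta(2^{k_1}(x-2^nt))\check\beta(2^{k_2}(y-2^nt^2))$; these are the same thing. (One trivial remark: with the paper's convention $\check\beta(x)=\tfrac{1}{2\pi}\int e^{ix\xi}\beta(\xi)\,\ud\xi$, the $(2\pi)^{-2}$ in your displayed formula for $K$ should be absorbed into the two factors of $\check\beta$, but this has no bearing on the $\lesssim 1$ conclusion.)
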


\begin{proof} We have
    \begin{align*}
        |\mathcal{F}^{-1} [m_{n,k_1,k_2}^w](x,y)| & \sim \Big|\int_{\R}\int_{\widehat{\R}^2} e^{i (x \xi + y \eta + 2^n \phi_{\xi, \eta}^w(t))} \beta(2^{-k_1}\xi) \beta(2^{-k_2}\eta) \ud \xi \ud \eta \,a(t) \ud t\Big| \\
        &  \lesssim_N \int_{\R} \frac{2^{k_1} 2^{k_2}}{(1+2^{k_1} |x-2^n t| + 2^{k_2}|y-2^nt^2|)^N} \, a(t) \ud t
    \end{align*}
    by repeated integration-by-parts. Integrating in $(x,y)$ and using Fubini's theorem, we obtain the result.
\end{proof}

As a consequence of Lemma~\ref{lem:kernel}, we have the basic $L^p$ estimate from Lemma~\ref{lem:high diagonal L2 surrogate}.

\begin{proof}[Proof of Lemma~\ref{lem:high diagonal L2 surrogate}] Since, for all $f \in \cS(\R^2)$, we have
\begin{equation*}
    \cH^w_{n,k_1,k_2}f =  K_{n,k_1,k_2}^w \ast f \qquad \textrm{where} \qquad K_{n,k_1,k_2}^w := \mathcal{F}^{-1} [m_{n,k_1,k_2}^w],
\end{equation*}
 the desired bound follows from Lemma~\ref{lem:kernel} and Young's inequality.
\end{proof}

\subsection{The operators $\cH_{n,k_1,k_2}^{2, \leq \kappa}$ and $\cH_{n,k_1,k_2}^{w, \ell}$}

We now turn to the kernels of the operators $\cH_{n,k_1,k_2}^{w,\leq \kappa}$ and $\cH_{n,k_1,k_2}^{w,\ell}$ introduced in \S\ref{subsec:further dec}.

\begin{lemma}\label{lem:kernel est ell} Let $n \in \N$, $1 \leq w \leq 8$ and $(k_1, k_2) \in \cB_{\diag}$.
    \begin{enumerate}[i)]
        \item If $k_2 > 8$ and $k_1 \leq \kappa n$, then 
        \begin{equation*}
            \|\mathcal{F}^{-1}[m_{n,k_1,k_2}^{w,\leq \kappa}]\|_{L^1(\R^2)} \lesssim 2^{2\floor{\kappa n}}.
        \end{equation*}
              
        \item If $k_2 \leq 8$, then for all $\floor{\kappa n} \leq \ell \leq \floor{n/3}$, we have
        \begin{equation*}
            \|\mathcal{F}^{-1}[m_{n,k_1,k_2}^{w,\ell}]\|_{L^1(\R^2)} \lesssim 2^{2\ell}.
        \end{equation*}
    \end{enumerate}
\end{lemma}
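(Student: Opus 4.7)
The plan is to factor each multiplier as a product $G \cdot m_n^w$ of smooth cutoffs $G$ and the oscillatory integral $m_n^w$, and to exploit the resulting convolution-type structure at the kernel level. In case i), set
\begin{equation*}
    G(\xi,\eta) := \beta(2^{-k_1}\xi)\beta(2^{-k_2}\eta)\beta_0\bigl(2^{-2k_2+2\floor{\kappa n}}(\eta^2+3w\xi)\bigr),
\end{equation*}
so that $m_{n,k_1,k_2}^{w,\leq\kappa} = G \cdot m_n^w$; case ii) is analogous with $\floor{\kappa n}$ replaced by $\ell$ and the outer cutoff taken to be $\beta$ for $\ell < \floor{n/3}$. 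Inserting \eqref{eq: multiplier 1}--\eqref{eq: multiplier 2} into the Fourier inversion formula and applying Fubini's theorem yields
\begin{equation*}
    \mathcal{F}^{-1}[m_{n,k_1,k_2}^{w,\leq\kappa}](x,y) = \frac{1}{(2\pi)^2}\int_{\R} a(t)\, e^{i 2^n t^3 w}\, L(x-2^n t, y-2^n t^2)\,\ud t,
\end{equation*}
where $L(x,y) := \int_{\widehat{\R}^2} e^{i(x\xi+y\eta)} G(\xi,\eta)\,\ud \xi\,\ud \eta$. Since $\int|a(t)|\,\ud t \lesssim 1$, the triangle inequality together with the translation invariance of Lebesgue measure reduces matters to showing $\|L\|_{L^1(\R^2)} \lesssim 2^{2\floor{\kappa n}}$ (and similarly with $\ell$ in case ii)).

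Next, I would bound $L$ pointwise by repeated integration by parts in $\xi$ and $\eta$. The support of $G$ has two-dimensional Lebesgue measure $\lesssim 2^{3k_2-2\floor{\kappa n}}$, since the parabolic cutoff confines $\xi$ to an interval of length $\sim 2^{2k_2-2\floor{\kappa n}}/w$ around $-\eta^2/(3w)$ for each $|\eta|\sim 2^{k_2}$. Each $\partial_\xi$ applied to $\beta_0\bigl(2^{-2k_2+2\floor{\kappa n}}(\eta^2+3w\xi)\bigr)$ contributes a factor of size $\sim 2^{2\floor{\kappa n}-2k_2}$, while each $\partial_\eta$ contributes $\sim 2|\eta| \cdot 2^{-2k_2+2\floor{\kappa n}} \sim 2^{2\floor{\kappa n}-k_2}$ on the support; in the relevant asymptotic regime these dominate the subordinate contributions $2^{-k_1}$, $2^{-k_2}$ coming from $\beta(2^{-k_1}\xi)$ and $\beta(2^{-k_2}\eta)$. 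Combining the trivial $L^\infty$ bound with the iterated integration-by-parts bound gives, for any $N\in\N_0$,
\begin{equation*}
    |L(x,y)| \lesssim_N \frac{2^{3k_2-2\floor{\kappa n}}}{(1+|x|\,2^{2k_2-2\floor{\kappa n}})^N(1+|y|\,2^{k_2-2\floor{\kappa n}})^N},
\end{equation*}
and integrating in $(x,y)$ with $N$ large enough produces
\begin{equation*}
    \|L\|_{L^1(\R^2)} \lesssim 2^{3k_2-2\floor{\kappa n}} \cdot 2^{2\floor{\kappa n}-2k_2} \cdot 2^{2\floor{\kappa n}-k_2} = 2^{2\floor{\kappa n}},
\end{equation*}
as desired; the identical calculation with $\ell$ in place of $\floor{\kappa n}$ handles case ii).

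The principal obstacle is justifying the derivative dominance $2^{2\floor{\kappa n}-2k_2} \gtrsim 2^{-k_1}$ in case i). Here the key observation is that non-emptiness of the product support forces $\xi \approx -\eta^2/(3w)$, hence $|\xi| \sim 2^{2k_2}/w$, which combined with $|\xi|\sim 2^{k_1}$ yields $k_1 = 2k_2 + O(1)$; the constraint $(k_1,k_2)\in \cB_{\diag}$ then pins both $k_1$ and $k_2$ inside an absolute bounded range, so the claimed dominance holds once $\floor{\kappa n}$ is sufficiently large. In case ii), $k_1, k_2$ are bounded by hypothesis, so the same conclusion is automatic. For the remaining small values of $\floor{\kappa n}$ (resp.\ $\ell$), the target bound $2^{2\floor{\kappa n}}$ (resp.\ $2^{2\ell}$) is comparable to $1$, and the universal estimate provided by Lemma~\ref{lem:kernel} is already enough.
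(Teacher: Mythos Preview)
Your argument is correct and follows the paper's approach essentially verbatim: factor the multiplier as (smooth cutoff) $\times\, m_n^w$, use Fubini to reduce to the $L^1$ norm of the inverse Fourier transform of the cutoff, and bound the latter by integration by parts exploiting that the $\Delta$-localisation confines $\xi$ (for fixed $\eta$) to an interval of length $O(2^{2k_2-2\ell})$. The paper states only case ii) in detail and calls case i) a ``minor adaptation''; your explicit treatment of i), including the observation that non-empty support together with $(k_1,k_2)\in\cB_{\diag}$ forces $k_1,k_2$ into a bounded range, is exactly what that adaptation amounts to.

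One small imprecision: in the closing sentence you invoke Lemma~\ref{lem:kernel} for small $\floor{\kappa n}$, but that lemma treats $m_{n,k_1,k_2}^w$ without the extra $\beta_0(\cdot\,\Delta)$ factor, and multiplying a multiplier by a bounded function does not control the $L^1$ norm of the inverse transform. The fix is immediate and already implicit in your own argument: for $\floor{\kappa n}$ bounded, the hypotheses $k_1\le\kappa n$ and $(k_1,k_2)\in\cB_{\diag}$ force $k_1,k_2$ bounded as well, so all cutoffs in $G$ have uniformly bounded $C^N$ norms and your pointwise integration-by-parts bound gives $\|L\|_{L^1}\lesssim 1$ directly.
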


\begin{proof} The proof of i) follows from a minor adaptation of that of ii), and so we only provide the latter. 

It suffices to show the pointwise bound
\begin{equation*}
            |\mathcal{F}^{-1}[m_{n,k_1,k_2}^{w,\ell}](x,y)| \lesssim_N \int_{1/2}^2 \frac{2^{-2\ell}}{(1+2^{-2\ell}|x-2^n t| + 2^{-2\ell}|y-2^n t^2|)^N} \ud t
        \end{equation*}
holds for all $N \in \N_0$, uniformly in $1 \leq w \leq 8$. Write $m_{n,k_1,k_2}^{w,\ell} = a_{k_1,k_2}^{w,\ell} \cdot m_n^w$ where
    \begin{equation*}
        a_{k_1,k_2}^{w,\ell}(\xi,\eta):= \beta(2^{-2k_2+2\ell} \Delta (w;\xi,\eta)) \beta(2^{-k_1} \xi) \beta(2^{-k_2} \eta)
    \end{equation*}
    and $m_n^w$ is as defined in \eqref{eq: multiplier 1}. Then 
    \begin{align*}
        \mathcal{F}^{-1}[m_{n,k_1,k_2}^{w,\ell}](x,y) = \int_\R e^{i 2^n w t^3} \mathcal{F}^{-1}[a_{k_1,k_2}^{w,\ell}](x-2^n t, y-2^n t^2) a(t) \ud t.
    \end{align*}
    
    In light of the above, it suffices to show that
    \begin{equation*}
        |\mathcal{F}^{-1}[a_{k_1,k_2}^{w,\ell}](x,y)| \lesssim_N \frac{2^{-2\ell}}{(1+2^{-2\ell}|x| + 2^{-2\ell}|y|)^N}
    \end{equation*}
    for all $N \in \N_0$. However, this follows from routine integration-by-parts and the fact that, for fixed $\eta$, the $\xi$-support of $a_{k_1,k_2}^{w,\ell}$ lies in a fixed interval of length $O(2^{-2\ell})$, in view of the support condition $|\Delta(w;\xi,\eta)|\lesssim 2^{-2\ell}$ and that $\Delta(w;\xi,\eta)=\eta^2+3w \xi$.
\end{proof}




\subsection{Proof of Lemma~\ref{lem:low}}\label{subsec:low}

Here we provide the proof of Lemma~\ref{lem:low}. The key ingredient is the following kernel estimate, which shares similarities with the above kernel estimates.

\begin{lemma}\label{lem:low kernel} For $\ell \in \N$ and $j \in \Z$ and $v \in V_j$, let $K_{\ell,j}^v$ denote the kernel of the operator $H_{-j+\ell}^v P_{\leq -C_1  + 2\ell + j}^{(1)} P_{\leq -C_2 + \ell + 2j}^{(2)}$. Then
\begin{equation*}
 |K_{\ell,j}^v(x,y)| \lesssim_N 2^{-\ell N}  2^{3(j+\ell)} (1+2^{2 \ell + j} |x| + 2^{\ell + 2j}|y|)^{-N} \qquad \textrm{for all $N \in \N_0$.}
\end{equation*}
\end{lemma}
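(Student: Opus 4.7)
The plan is to obtain the claimed pointwise bound via a standard multiplier analysis on the Fourier side. First I would write
\begin{equation*}
K^v_{\ell,j}(x,y) = \frac{1}{(2\pi)^2}\int_{\widehat{\R}^2} e^{i(x\xi+y\eta)}\,m^v_{\ell,j}(\xi,\eta)\,\ud\xi\,\ud\eta,
\end{equation*}
where $m^v_{\ell,j}(\xi,\eta):=\beta_0(2^{C_1-2\ell-j}\xi)\beta_0(2^{C_2-\ell-2j}\eta)\mathcal{I}(\xi,\eta)$ and
\begin{equation*}
\mathcal{I}(\xi,\eta):=\int_\R e^{-i(t\xi+t^2\eta-t^3 v)}\beta(2^{j-\ell}t)\frac{\ud t}{t}.
\end{equation*}
The key observation is that on $\supp m^v_{\ell,j}$ the $t$-phase derivative $-(\xi+2t\eta-3vt^2)$ has size $\sim 2^{j+2\ell}$: indeed $|t|\sim 2^{-j+\ell}$ and $v\in V_j$ force $|3vt^2|\sim 2^{j+2\ell}$, while the cutoffs together with the large choice $C_1=2C_2+12$ ensure $|\xi|\lesssim 2^{j+2\ell-C_1}$ and $|2t\eta|\lesssim 2^{j+2\ell-C_2}$, both well below this scale.

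With this at hand, I would integrate by parts repeatedly in $t$ to show $|\mathcal{I}|\lesssim_N 2^{-3\ell N}$ for any $N\in\N_0$. Each step gains a factor $(3vt^2)^{-1}\sim 2^{-j-2\ell}$, while any derivative landing on the amplitude $\beta(2^{j-\ell}t)/t$ or on $(3vt^2)^{-1}$ itself introduces at most a factor $2^{j-\ell}$, so the net gain per step is $2^{-3\ell}$. Applying the same reasoning to the amplitude $(-i)^{k_1+k_2}t^{k_1+2k_2-1}\beta(2^{j-\ell}t)$ (obtained by differentiating $\mathcal{I}$ in $\xi,\eta$ under the integral sign) yields
\begin{equation*}
|\partial_\xi^{k_1}\partial_\eta^{k_2}\mathcal{I}(\xi,\eta)|\lesssim_{k_1,k_2,N} 2^{(\ell-j)(k_1+2k_2)}\cdot 2^{-3\ell N}.
\end{equation*}
Combining this with the trivial cutoff estimates $|\partial_\xi^{l_1}\beta_0(2^{C_1-2\ell-j}\xi)|\lesssim 2^{-l_1(2\ell+j)}$ and $|\partial_\eta^{l_2}\beta_0(2^{C_2-\ell-2j}\eta)|\lesssim 2^{-l_2(\ell+2j)}$ via Leibniz, an elementary exponent-matching computation shows that for every $M\in\N_0$ one can choose $N$ large enough (depending on $k_1,k_2,M$) to guarantee
\begin{equation*}
|\partial_\xi^{k_1}\partial_\eta^{k_2}m^v_{\ell,j}(\xi,\eta)|\lesssim_{k_1,k_2,M} 2^{-k_1(2\ell+j)-k_2(\ell+2j)}\cdot 2^{-\ell M}.
\end{equation*}
The claimed pointwise bound then follows from the Fourier relation $|x|^{k_1}|y|^{k_2}|K^v_{\ell,j}(x,y)|\leq \|\partial_\xi^{k_1}\partial_\eta^{k_2}m^v_{\ell,j}\|_{L^1}$, combined with the support estimate $|\supp m^v_{\ell,j}|\lesssim 2^{2\ell+j}\cdot 2^{\ell+2j}=2^{3(\ell+j)}$, by selecting $(k_1,k_2)$ appropriately for each $(x,y)$.

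The main obstacle is a bookkeeping issue concerning the scale mismatch between the physical $t$-variable and the frequency variables. The spatial decay scales $2^{-2\ell-j}$ and $2^{-\ell-2j}$ appearing in the statement are dictated by the low-pass cutoffs, whereas differentiating $\mathcal{I}$ in $\xi$ or $\eta$ brings down factors of $t\sim 2^{-j+\ell}$ or $t^2\sim 2^{-2j+2\ell}$, each amounting to a loss of $2^{3\ell}$ per derivative relative to the cutoff scales. The decisive point is that $e^{ivt^3}$ oscillates $\sim 2^{3\ell}$ times across $\supp\beta(2^{j-\ell}\cdot)$, so each integration by parts in $t$ supplies exactly $2^{-3\ell}$ of gain; since arbitrarily many integrations by parts are permitted, any polynomial decay $2^{-\ell M}$ can be achieved.
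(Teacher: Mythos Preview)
Your proposal is correct and follows essentially the same approach as the paper: both arguments use non-stationary phase in $t$ (exploiting that the term $3vt^2\sim 2^{j+2\ell}$ dominates $\xi$ and $2t\eta$ on the relevant support) to obtain arbitrary decay, followed by integration by parts in $(\xi,\eta)$ to convert multiplier derivative bounds into spatial decay. The only difference is cosmetic: the paper first performs the rescaling $(t;\xi,\eta)\mapsto(2^{\ell-j}t;\,2^{2\ell+j}\xi,\,2^{\ell+2j}\eta)$, which reduces everything to unit scales and makes the exponent bookkeeping you describe in your final paragraph automatic, whereas you track the scale mismatch explicitly and absorb the $2^{3\ell}$ loss per frequency derivative by taking extra integrations by parts in $t$.
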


\begin{proof} By the rescaling argument from \S\ref{subsec:freq loc}, it follows that
\begin{equation*}
   K_{\ell,j}^v(x,y) = 2^{3(j+\ell)} \frac{1}{(2\pi)^2} \int_{\widehat{\R}^2} e^{i (2^{2\ell + j}x \xi + 2^{\ell + 2j}y \eta)} \fm_{3\ell}^{2^{-3j}v}(\xi,\eta) \ud \xi \ud \eta
\end{equation*}
where, using the definitions from \eqref{eq: multiplier 2}, we have
\begin{equation*}
    \fm_n^w(\xi,\eta) :=  \beta_0(2^{C_1}\xi) \beta_0(2^{C_2}\eta) \int_{\R} e^{ i2^n\phi_{\xi,\eta}^w(t)} a(t) \ud t.
\end{equation*}
Since $v \in V_j = [2^{3j}, 2^{3(j+1)}]$, repeated integration-by-parts shows that
\begin{equation}\label{eq:low 1}
    | K_{\ell,j}^v(x,y)| \lesssim_N \sup_{1 \leq w \leq 8} \|\fm_{3\ell}^w\|_{C^N(\widehat{\R}^2)} 2^{3(j+\ell)} \big(1 + 2^{2\ell + j}|x| + 2^{\ell + 2j}|y| \big)^{-N}
\end{equation}
for all $N \in \N$. Differentiating $\fm_n^w$, the problem reduces to showing
\begin{equation}\label{eq:low 2}
   \sup_{1 \leq w \leq 8} \sup_{\substack{|\xi| \leq 2^{-C_1 +1} \\ |\eta| \leq 2^{-C_2+1}}} \Big| \int_{\R} e^{i2^n \phi_{\xi,\eta}^w(t)} t^m a(t) \ud t \Big| \lesssim_{m, M} 2^{-nM} \qquad \textrm{for all $m$, $M \in \N_0$.} 
\end{equation}
Indeed, the desired bounds then follow by taking $M := 2N$, so the gain in $2^{-2Nn}$ from \eqref{eq:low 2} compensates for the loss of $2^{Nn}$ in \eqref{eq:low 1}. 

Let $1 \leq w \leq 8$, $(\xi, \eta) \in \widehat{\R}^2$ satisfy $|\xi| \leq 2^{-C_1 +1}$ and $|\eta| \leq 2^{-C_2+1}$ and $t \in \supp a$, so that $1/2 \leq |t| \leq 2$. Since $(\phi_{\xi,\eta}^w)'(t) = - \xi - 2\eta t + 3w t^2$, we have
\begin{equation*}
    |(\phi_{\xi,\eta}^w)'(t)| \geq 3/4 - |\xi| - 4|\eta| \geq 1/8,
\end{equation*}
provided $C_1$, $C_2 \geq 4$. Moreover, $|(\phi_{\xi,\eta}^w)''(t)|\leq 2|\eta| + 6 |t| w \leq 2^6$ and $|(\phi_{\xi,\eta}^w)'''(t)|\leq 6 w \leq 2^6$. The desired estimate \eqref{eq:low 2} now follows from an application of the non-stationary phase Lemma \ref{non-stationary lem} below.
\end{proof}

Lemma~\ref{lem:low} easily follows from the above kernel estimate. 

\begin{proof}[Proof of Lemma~\ref{lem:low}] Using a standard argument (see, for instance, \cite[Chapter 2, \S2.1]{Stein1993}), Lemma~\ref{lem:low kernel} implies the pointwise bound
\begin{equation*}
        \sup_{j \in \Z} \sup_{2^{3j} \leq v \leq 2^{3(j+1)}} |H_{-j+\ell}^v P_{\leq -C_1 + 2\ell + j}^{(1)} P_{\leq -C_2 + \ell + 2j }^{(2)} f (x,y)| \lesssim_N 2^{-N \ell} M_{\textrm{st}} f(x,y),
    \end{equation*}
    where $M_{\textrm{st}}$ denotes the strong Hardy--Littlewood maximal function. Thus, the desired bound follows from the $L^p$ boundedness of $M_{\mathrm{st}}$. 
\end{proof}




\section{$L^2$ Theory}\label{sec:L2 theory}

Here we prove Proposition~\ref{prop:L2 non-stationary}, Lemma~\ref{lem:high diagonal L2} and Lemma~\ref{lem:L2 smoothing}, using elementary $L^2$-based theory. Throughout this section, $\phi_{\xi, \eta}^w(t) := -t\xi-t^2\eta+t^3w$ and $a$ are as defined in \eqref{eq: multiplier 2}; it is useful to note the formul\ae
\begin{equation}\label{eq: derivatives}
    (\phi_{\xi,\eta}^w)'(t) = -\xi -2 t \eta + 3 t^2 w, \quad (\phi_{\xi,\eta}^w)''(t)= -2 \eta + 6 t w, \quad (\phi_{\xi,\eta}^w)'''(t) =  6  w 
\end{equation}
 and that $(\phi_{\xi,\eta}^w)^{(k)}(t)=0$ for $k \geq 4$.




 \subsection{The non-stationary case}
 The proof of Proposition~\ref{prop:L2 non-stationary} makes repeated use of the following elementary oscillatory integral estimate. 

 \begin{lemma}[Non-stationary phase]\label{non-stationary lem} Let $N \in \N$ and $\phi \in C^N(-1,1)$ be real valued and $a \in C^N_c(-1,1)$ satisfy $\|a\|_{C^N}  \leq 1$. Further suppose
 \begin{enumerate}[i)]
     \item $|\phi'(t)| \geq 1$ for all $t \in (-1,1)$;
     \item $|\phi^{(k)}(t)| \leq M$ for all $1 \leq k \leq N$ and $t \in (-1,1)$.
 \end{enumerate}
 For all $\lambda \geq 1$, we have
 \begin{equation*}
     \Big|\int_{\R} e^{i \lambda \phi(t)}a(t)\ud t\Big| \lesssim_{N,M} \lambda^{-N}.
 \end{equation*}
Here the implied constant depends on $N$ and $M$, but is otherwise independent of the choice of $\phi$ and $a$. 
 \end{lemma}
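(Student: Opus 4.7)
\smallskip

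\noindent\textit{Proof sketch for Lemma~\ref{non-stationary lem}.} The plan is the classical integration-by-parts argument for non-stationary phase. Define the first-order differential operator
\begin{equation*}
    L \psi(t) := \frac{1}{i\lambda \phi'(t)}\, \psi'(t),
\end{equation*}
which is well defined on $(-1,1)$ by hypothesis i) and satisfies $L[e^{i\lambda\phi}] = e^{i\lambda\phi}$. Its formal transpose is
\begin{equation*}
    {}^tL\psi(t) = -\frac{d}{dt}\!\left(\frac{\psi(t)}{i\lambda \phi'(t)}\right).
\end{equation*}
Since $a \in C^N_c(-1,1)$, there are no boundary terms, and applying integration-by-parts $N$ times gives
\begin{equation*}
    \int_{\R} e^{i\lambda \phi(t)} a(t)\, dt \;=\; \int_{\R} e^{i\lambda \phi(t)}\, ({}^tL)^N a(t)\, dt.
\end{equation*}
Since $\supp a$ has measure at most $2$, the problem reduces to the pointwise estimate
\begin{equation}\label{eq:ptwise ibp}
    |({}^tL)^N a(t)| \;\lesssim_{N,M}\; \lambda^{-N} \qquad \text{for all } t\in(-1,1).
\end{equation}

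To establish \eqref{eq:ptwise ibp} I would proceed by induction on $N$, showing that $({}^tL)^N a$ is a finite sum of terms of the form
\begin{equation*}
    c_{\boldsymbol{\alpha},\sigma,r}\; \lambda^{-N}\; \frac{\prod_{j} \phi^{(\alpha_j+1)}(t)}{(\phi'(t))^{r}}\; a^{(\sigma)}(t),
\end{equation*}
where the multi-indices satisfy $\sigma + \sum_j \alpha_j = N$, each $\alpha_j \geq 0$, and $r \leq 2N$. The inductive step is routine: one applies $-\tfrac{d}{dt}(\cdot / (i\lambda\phi'))$ to each such term and uses the Leibniz rule; each differentiation either lowers an exponent or raises the order of a derivative on $\phi$ or $a$, while introducing one more factor of $\lambda^{-1}$ and at most one more factor of $1/\phi'$. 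Invoking hypotheses i) and ii) together with $\|a\|_{C^N}\le 1$ then bounds each such term by $C_{N,M}\lambda^{-N}$, and there are only $O_N(1)$ terms, so \eqref{eq:ptwise ibp} follows. Integrating \eqref{eq:ptwise ibp} over $\supp a$ yields the desired conclusion.

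The only genuine technical point is the inductive bookkeeping of the form of $({}^tL)^N a$, but this is a standard computation; no new ideas beyond the Leibniz and chain rules are required. In particular, the constant in the final bound depends polynomially on $M$ (with exponent depending on $N$), but uniformly over all phases $\phi$ and amplitudes $a$ subject to the stated hypotheses.
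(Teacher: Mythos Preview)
Your argument is correct and is exactly the approach the paper indicates: the paper says the lemma is a ``simple and well-known consequence of repeated integration-by-parts'' and omits the details, which is precisely what you have filled in. One technical aside: your term structure (take $\sigma=0$ and a single $\alpha_j=N$) admits a factor $\phi^{(N+1)}$, one order beyond the stated $C^N$ hypothesis on $\phi$; this is a minor off-by-one in the lemma's hypotheses rather than in your proof, and it is immaterial in the paper's applications, where $\phi$ is always a polynomial.
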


Lemma~\ref{non-stationary lem} is a simple and well-known consequence of repeated integration-by-parts. We omit the details. 

\begin{lemma}\label{lem:non-stationary multiplier} Let $n \in \N$, $k_1$, $k_2 \in \Z$ and $(\xi,\eta) \in \widehat{\R}^2$,  $1 \leq w \leq 8$.
\begin{enumerate}[i)]
    \item If $2^{(k_2 \vee 0) + 8} \leq |\xi| $ and $|\eta| \leq 2^{k_2+1}$, then
    \begin{equation}\label{eq:mult decay 1}
        |m_{n}^w (\xi, \eta)| \lesssim_N |\xi|^{-N} 2^{ -n N}.
    \end{equation}
    \item If $|\xi| \leq 2^{k_1 +1}$ and  $|\eta| \geq 2^{(k_1 \vee 0) + 8}$, then
    \begin{equation}\label{eq:mult decay 2}
        |m_{n}^w (\xi, \eta)| \lesssim_N |\eta|^{-N} 2^{ -n N}.
    \end{equation}
\end{enumerate}
\end{lemma}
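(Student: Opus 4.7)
The plan is to apply the non-stationary phase Lemma~\ref{non-stationary lem} directly to
\begin{equation*}
    m_n^w(\xi, \eta) = \int_\R e^{i 2^n \phi_{\xi,\eta}^w(t)} a(t) \ud t,
\end{equation*}
after showing that under the hypotheses of each case the phase has non-vanishing first derivative with dominant term $\xi$ or $2t\eta$, respectively. Using \eqref{eq: derivatives}, the higher derivatives admit the universal bounds $|(\phi_{\xi,\eta}^w)''(t)| \leq 2|\eta| + 96$, $|(\phi_{\xi,\eta}^w)'''(t)| \leq 48$ and $(\phi_{\xi,\eta}^w)^{(k)} \equiv 0$ for $k \geq 4$, valid on $\supp a \subseteq \{1/2 \leq |t| \leq 2\}$ and for $1 \leq w \leq 8$.

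For i), I would first show $|(\phi_{\xi,\eta}^w)'(t)| \gtrsim |\xi|$ on $\supp a$. The bounds $|\eta| \leq 2^{k_2+1}$ and $|\xi| \geq 2^{(k_2 \vee 0) + 8}$ imply $|2t\eta| \leq 4|\eta| \leq 2^{k_2 + 3} \leq |\xi|/16$, separating the cases $k_2 \geq 0$ and $k_2 < 0$; and since $|\xi| \geq 256$, also $|3t^2 w| \leq 96 \leq |\xi|/2$. The reverse triangle inequality then gives $|(\phi_{\xi,\eta}^w)'(t)| \geq |\xi|/4$. After normalising the phase by $|\xi|$, all higher derivatives become bounded by an absolute constant (for instance $|(\phi_{\xi,\eta}^w)''(t)|/|\xi| \leq 2|\eta|/|\xi| + 96/|\xi| \lesssim 1$), so Lemma~\ref{non-stationary lem} applies with large parameter $\lambda \sim 2^n|\xi|$, producing the bound $|m_n^w(\xi,\eta)| \lesssim_N 2^{-nN}|\xi|^{-N}$, which is \eqref{eq:mult decay 1}. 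Case ii) is symmetric, but exploits the dominance of the $2t\eta$ term and uses crucially that $|t| \geq 1/2$ on $\supp a$: one has $|2t\eta| \geq |\eta|$, and the hypotheses give $|3t^2 w| \leq 96 \leq |\eta|/2$ together with $|\xi| \leq 2^{k_1+1} \leq 2^{-7}|\eta|$, so that
\begin{equation*}
    |(\phi_{\xi,\eta}^w)'(t)| \geq |2t\eta| - |3t^2 w| - |\xi| \gtrsim |\eta|.
\end{equation*}
Normalising by $|\eta|$ and applying Lemma~\ref{non-stationary lem} with $\lambda \sim 2^n |\eta|$ yields \eqref{eq:mult decay 2}.

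There is no substantive obstacle; the argument is routine non-stationary phase. The gap of $2^8$ in the hypotheses $|\xi| \geq 2^{(k_2 \vee 0) + 8}$ and $|\eta| \geq 2^{(k_1 \vee 0) + 8}$ is precisely calibrated so that the desired term dominates in the first derivative by at least a factor of $4$ on $\supp a$, ensuring $\lambda \gtrsim 2^n \cdot 2^{(k_i \vee 0)}$ in each application. A trivial rescaling of $t$ (by a factor of $2$) is needed to match the normalisation $\supp a \subseteq (-1,1)$ in Lemma~\ref{non-stationary lem}, and introduces only harmless constants depending on $N$.
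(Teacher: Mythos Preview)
Your proposal is correct and follows essentially the same approach as the paper: both establish the lower bound $|(\phi_{\xi,\eta}^w)'(t)| \gtrsim |\xi|$ (respectively $\gtrsim |\eta|$) on $\supp a$ by direct estimation of the remaining terms, verify that the higher derivatives are bounded by the same quantity, and then apply Lemma~\ref{non-stationary lem} with $\lambda \sim 2^n|\xi|$ (respectively $2^n|\eta|$). The only cosmetic difference is that the paper groups the two subdominant terms together (e.g.\ $|-2t\eta + 3t^2 w| \leq 2^{(k_2 \vee 0)+7} \leq |\xi|/2$ in case i)) rather than bounding them separately as you do.
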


\begin{proof} Each estimate is proved using repeated integration-by-parts (Lemma~\ref{non-stationary lem}). We make repeated use of the fact that $1/2 \leq |t| \leq 2$ on the support of $a$.\medskip

\noindent i) If $1/2 \leq t \leq 2$, then $|-2t\eta+3t^2 w| \leq 2^{k_2+3}+3\cdot 2^5 \leq 2^{(k_2 \vee 0) + 7} \leq  |\xi|/2$. Thus, by \eqref{eq: derivatives} and the triangle inequality,
    \begin{equation*}
        |(\phi_{\xi,\eta}^w)'(t)| \geq  |\xi|/2  \qquad \textrm{for all $1/2 \leq t \leq 2$.}
    \end{equation*}
    A similar calculation shows that $|(\phi_{\xi,\eta}^w)''(t)| \leq |\xi|$ and $|(\phi_{\xi,\eta}^w)'''(t)| \leq |\xi|$. Since all higher order derivatives vanish, the desired bound \eqref{eq:mult decay 1} follows from an application of Lemma~\ref{non-stationary lem}, at least after applying suitable changes of variables and scalings to massage the setup to match the hypotheses of the lemma.\medskip

\noindent ii) If $1/2 \leq t \leq 2$, then $|- \xi + 3t^2 w| \leq 2^{k_1+1} + 3\cdot 2^5 \leq 2^{(k_1 \vee 0) + 7} \leq |\eta|$. Thus, by \eqref{eq: derivatives} and the triangle inequality,
    \begin{equation*}
        |(\phi_{\xi,\eta}^w)'(t)| \geq |\eta| \qquad \textrm{for all $1/2 \leq t \leq 2$.}
    \end{equation*}
    A similar calculation shows that $|(\phi_{\xi,\eta}^w)''(t)| \leq 3|\eta|$ and $|(\phi_{\xi,\eta}^w)'''(t)| \leq |\eta|$. Thus, as before, the desired bound \eqref{eq:mult decay 2} follows from an application of Lemma~\ref{non-stationary lem}.
\end{proof}

\begin{proof}[Proof of Proposition \ref{prop:L2 non-stationary}] In each case it suffices to prove the $L^2$ bound only. The full range of $L^p$ then follows via interpolation with the $L^{\infty}$ bound from Lemma~\ref{lem:kernel}.\medskip

\noindent \textit{i)} If $(k_1, k_2) \in \cG_1$, then $k_1 > (k_2 \vee 0) + 8$ and so $2^{(k_2 \vee 0) + 8} \leq 2^{k_1-1} \leq |\xi|$ and $|\eta| \leq 2^{k_2 + 1}$ for $(\xi,\eta) \in \supp m_{n,k_1, k_2}^w$ or $(\xi,\eta) \in \supp m_{n,k_1, \leq k_2}^w$. The desired $L^2$ bound now follows from Lemma~\ref{lem:non-stationary multiplier} i) and Plancherel's theorem. \medskip

\noindent \textit{ii)} If $(k_1, k_2) \in \cG_2$, then $k_2 > (k_1 \vee 0) + 8$ and so $2^{(k_1 \vee 0) + 8} \leq 2^{k_2-1} \leq |\eta|$ and $|\xi| \leq 2^{k_1 + 1}$ for $(\xi,\eta) \in \supp m_{n,k_1, k_2}^w$ or $(\xi,\eta) \in \supp m_{n,\leq k_1, k_2}^w$. The desired $L^2$ bound now follows from Lemma~\ref{lem:non-stationary multiplier} ii) and Plancherel's theorem. \medskip
\end{proof}




\subsection{van der Corput estimates} We now turn to the proofs of Lemma~\ref{lem:high diagonal L2} and Lemma~\ref{lem:L2 smoothing}. We first observe a basic pointwise bound for the underlying multiplier, based on the van der Corput oscillatory integral estimate.

\begin{lemma}\label{lem:vdc multiplier 1} Let $n \in \N$, $(\xi, \eta) \in \widehat{\R}^2$ and $1 \leq w \leq 8$. If $|\eta| \geq 2^7$, then
    \begin{equation*}
        |m_n^w (\xi, \eta)| \lesssim 2^{ -n/2}|\eta|^{-1/2}.
    \end{equation*}
\end{lemma}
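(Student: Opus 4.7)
The plan is to apply the second-derivative form of van der Corput's lemma directly to the oscillatory integral defining $m_n^w$. Recall from \eqref{eq: derivatives} that
\[
(\phi_{\xi,\eta}^w)''(t) = -2\eta + 6 t w.
\]
On the support of $a$ we have $1/2 \leq |t| \leq 2$, and we are given $1 \leq w \leq 8$, so $|6tw| \leq 96$. Hence the hypothesis $|\eta| \geq 2^7 = 128$ forces
\[
|(\phi_{\xi,\eta}^w)''(t)| \geq 2|\eta| - 96 \geq |\eta| \quad \text{for all } t \in \supp a.
\]

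Multiplying by $2^n$, the phase $\Phi(t) := 2^n \phi_{\xi,\eta}^w(t)$ of $m_n^w(\xi,\eta)$ satisfies $|\Phi''(t)| \geq 2^n |\eta|$ throughout $\supp a$. Since the amplitude $a(t) = \beta(t)/t$ is smooth, compactly supported in $[-2,-1/2]\cup [1/2,2]$, and has bounded total variation independent of all parameters, the standard van der Corput lemma (see, e.g., \cite[Chapter VIII, Proposition 2]{Stein1993}) yields
\[
|m_n^w(\xi,\eta)| = \Big|\int_\R e^{i \Phi(t)} a(t) \ud t\Big| \lesssim (2^n |\eta|)^{-1/2} \big(\|a\|_\infty + \|a'\|_{L^1}\big) \lesssim 2^{-n/2} |\eta|^{-1/2},
\]
which is the claimed estimate.

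There is no real obstacle here: the $|\eta| \geq 2^7$ threshold is calibrated precisely so that the cubic term in $\phi_{\xi,\eta}^w$ cannot cancel the quadratic term's second derivative on $\supp a$, and one then reads off the estimate from van der Corput. The only point to verify carefully is the numerical comparison ensuring the lower bound on $|(\phi_{\xi,\eta}^w)''|$, but this is immediate from the explicit bounds on $|t|$ and $w$ above.
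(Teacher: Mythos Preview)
Your proof is correct and follows essentially the same approach as the paper: both verify the lower bound $|(\phi_{\xi,\eta}^w)''(t)| \geq |\eta|$ on $\supp a$ from the explicit formula and the numerical constraint $|\eta| \geq 2^7 > 96 \geq 6|t|w$, and then invoke van der Corput's lemma with second derivatives. Your write-up is simply a more detailed version of the paper's two-line argument.
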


\begin{proof} By \eqref{eq: derivatives}, we have $|(\phi_{\xi,\eta}^w)''(t)| \geq 2|\eta| - 6|t|w \geq |\eta|$ for $|\eta|  \geq 3 \cdot 2^5 \geq 6 |t|w$. Thus, the desired bound follows from van der Corput's lemma. 
\end{proof}

Lemma~\ref{lem:vdc multiplier 1} and Lemma~\ref{lem:kernel} easily combine to give Lemma~\ref{lem:high diagonal L2}.

\begin{proof}[Proof of Lemma~\ref{lem:high diagonal L2}] For $k_2 > 8$ we have $|\eta| \geq 2^{k_2 - 1} > 2^7$ on the support of $m_{n,k_1,k_2}^w$. Thus, we can apply Lemma~\ref{lem:vdc multiplier 1} and Plancherel's theorem to deduce the desired bound for $p = 2$. The $p = \infty$ case again follows from Lemma~\ref{lem:kernel}, and we conclude the proof by interpolating between these two endpoints.
\end{proof}

It remains to prove Lemma~\ref{lem:L2 smoothing}. The key ingredient is the following multiplier estimate, which is again based on van der Corput's lemma. 

\begin{lemma}\label{lem:vdc multiplier 2} Let $n \in \N$, $(k_1, k_2) \in \cB_{\diag}$ and $(\xi, \eta) \in \widehat{\R}^2$ and $1 \leq w \leq 8$.
\begin{enumerate}[i)]
    \item If $k_2 > 8$ and $k_1 \leq \kappa n$, then
    \begin{equation*}
        \Big(\int_1^8 |m_{n,k_1,k_2}^{w,\leq \kappa} (\xi, \eta)|^2 \ud w \Big)^{1/2} \lesssim 2^{-n/2-\kappa n/2 - k_2/2 }.
    \end{equation*}
    \item If $k_2 \leq 8$ and $\floor{\kappa n} \leq \ell \leq \floor{n/3}$, then
    \begin{equation*}
        \Big(\int_1^8 |m_{n,k_1,k_2}^{w,\ell} (\xi, \eta)|^2 \ud w \Big)^{1/2} \lesssim 2^{-(n + \ell)/2}.
    \end{equation*}
\end{enumerate}
\end{lemma}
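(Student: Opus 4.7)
The plan is to prove both parts by a common two-step scheme: combine a pointwise van der Corput / stationary-phase bound on $|m_n^w(\xi,\eta)|$ (uniform in $w$) with the observation that the $\Delta$-cutoff restricts the effective range of $w$ (for fixed $\xi,\eta$) to a short interval. Multiplying these two pieces of information should produce the claimed $L^2_w$ estimate.

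For the $w$-localisation, I note that $\partial_w \Delta(w;\xi,\eta) = 3\xi$ and $|\xi|\sim 2^{k_1}$ on the support of $\beta(2^{-k_1}\xi)$. Hence the set $\{w\in[1,8] : |\Delta(w;\xi,\eta)|\lesssim 2^{2k_2-2\ell}\}$, respectively the analogous set with $\floor{\kappa n}$ in place of $\ell$, has measure $\lesssim 2^{2k_2-2\ell-k_1}$. The diagonal condition $|k_1-k_2|\lesssim 1$ from $\cB_{\diag}$ then reduces this to $\lesssim 2^{k_2-2\ell}$, respectively $\lesssim 2^{k_2-2\kappa n}$.

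For part ii), since $k_2\leq 8$ and $(k_1,k_2)\in \cB_{\diag}$, both $k_1,k_2$ are $O(1)$. The critical points of $\phi_{\xi,\eta}^w$ are $t_\pm = (\eta\pm\sqrt{\Delta})/(3w)$ with $(\phi_{\xi,\eta}^w)''(t_\pm) = \pm 2\sqrt{\Delta}$. In the range $\floor{\kappa n}\leq \ell<\floor{n/3}$ the $\beta$-cutoff pins $|\Delta|\sim 2^{2k_2-2\ell}$, and stationary phase at each $t_\pm$ gives the pointwise bound $|m_n^w|\lesssim 2^{-n/2}|\Delta|^{-1/4}\sim 2^{-n/2+\ell/2}$. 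At the boundary $\ell=\floor{n/3}$ the $\beta_0$-cutoff allows $\Delta$ to vanish; there one uses $(\phi_{\xi,\eta}^w)''' = 6w\sim 1$ and a standard van der Corput argument to obtain the uniform bound $|m_n^w|\lesssim 2^{-n/3}$, which precisely matches the stationary-phase estimate at the transition $\ell=n/3$. Multiplying by the $w$-interval length $\lesssim 2^{-2\ell}$ yields $\|m\|_{L^2_w}^2\lesssim 2^{-2\ell}\cdot 2^{-n+\ell} = 2^{-n-\ell}$, as required.

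For part i), the key observation is that $k_2>8$ together with $(k_1,k_2)\in \cB_{\diag}$ and the cutoff condition $|\Delta|\lesssim 2^{2k_2-2\kappa n}$ force $\sqrt{|\Delta|}\ll|\eta|$, so $t_\pm\approx\eta/(3w)$ and $|t_\pm|\gtrsim 2^{k_2}/24>10$; both critical points thus lie well outside $\supp a\subset[-2,2]$. Consequently, on $\supp a$ one has $|\phi'(t)| = 3w|(t-t_+)(t-t_-)|\sim 2^{2k_2}$, and a routine non-stationary integration-by-parts argument gives $|m_n^w|\lesssim_N (2^{n+2k_2})^{-N}$ for every $N$. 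Combined with the trivial bound $\leq 7$ on the length of the $w$-interval, this already decays much faster than the claimed $2^{-(n+\kappa n+k_2)/2}$. I expect the main technical obstacle to be the degenerate regime $\ell\sim n/3$ in ii), where the two critical points coalesce and one has to glue the stationary-phase bound to the $2^{-n/3}$ uniform bound via a careful partition of unity at scale $2^{-n/3}$ about the common zero $t=\eta/(3w)$ of $(\phi_{\xi,\eta}^w)''$, verifying that no intermediate regime of $|\Delta|$ produces a pointwise bound worse than $2^{-n/2+\ell/2}$.
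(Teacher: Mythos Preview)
Your approach for part ii) is essentially the paper's: combine the $w$-interval bound $\lesssim 2^{-2\ell}$ with the pointwise estimate $|m|\lesssim 2^{-(n-\ell)/2}$. The paper makes the pointwise bound precise by splitting $\beta=\beta^++\beta^-$ so as to treat $\Delta<0$ separately (there are no real critical points and a first-derivative van der Corput gives $|m|\lesssim 2^{-(n-2\ell)}$), and, for $\Delta>0$, by partitioning $\supp a$ into a region $T_2$ within $c\,2^{-\ell}$ of some $t_\pm$ (where $|\phi''|\gtrsim 2^{-\ell}$ and second-derivative van der Corput gives $2^{-(n-\ell)/2}$) and its complement $T_1$ (where $|\phi'|\gtrsim 2^{-2\ell}$ and first-derivative van der Corput gives $2^{-(n-2\ell)}$). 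Your sketch correctly anticipates the endpoint gluing at $\ell=\lfloor n/3\rfloor$, but you should also handle the $\Delta<0$ case explicitly rather than assuming real critical points throughout.

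For part i) your route diverges from the paper's and does more work than necessary. The paper simply invokes the second-derivative van der Corput bound: since $k_2>8$ gives $|\eta|\geq 2^7$, one has $|\phi''(t)|\geq 2|\eta|-6|t|w\gtrsim|\eta|$ on $\supp a$, hence $|m_n^w|\lesssim 2^{-n/2-k_2/2}$ uniformly in $w$. Combining this with your own $w$-localisation (interval length $\lesssim 2^{2k_2-k_1-2\kappa n}\lesssim 2^{-\kappa n}$, using $k_2\leq k_1+C\leq\kappa n+C$) gives the claim directly. Your non-stationary argument is correct when it applies and indeed gives far more decay, but the assertion $\sqrt{|\Delta|}\ll|\eta|$ requires $\kappa n\gtrsim 1$; for bounded $n$ you would still need to fall back on the second-derivative bound above. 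Note also that your argument tacitly forces $|\xi|\sim|\eta|^2/(3w)\sim 2^{2k_2}$, so that the diagonal constraint $|k_1-k_2|\leq C$ makes $k_2$ bounded---this is why your super-polynomial decay is consistent with the stated $2^{-k_2/2}$ factor, but it also means the non-stationary detour buys nothing over the simpler argument.
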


\begin{proof} For both i) and ii) we shall exploit the fact that the $\Delta(w;\xi,\eta)$ localisation corresponds to localising the $w$ variable to a small interval. In particular, 
\begin{equation}\label{eq: w loc}
   \Big|w+\frac{\eta^2}{3\xi}\Big| = \frac{1}{3|\xi|}|\Delta(w;\xi,\eta)| \leq 2^{-k_1} |\Delta(w;\xi,\eta)| 
\end{equation}
for  $2^{k_1 - 1} \leq |\xi| \leq 2^{k_1 + 1}$. \medskip
 
 \noindent \textit{i)} On the support of $m_{n,k_1,k_2}^{w,\leq \kappa}$ we have $|\Delta(w;\xi,\eta)|\leq 2^{2k_2-2\kappa n +1}$. This combines with \eqref{eq: w loc} to give
    \begin{equation*}
        \Big|w+\frac{\eta^2}{3\xi}\Big| \leq 2^{2k_2-k_1-2\kappa n + 1}.
    \end{equation*}
     Since $(k_1, k_2) \in \cB_{\diag}$ and $k_1 \leq \kappa n$, it follows that $k_2 \leq k_1 + C \leq \kappa n + C$ for $C := (C_1 \vee C_2) + 16$. Thus, for fixed $(\xi, \eta) \in \widehat{\R}^2$, we conclude that $m_{n,k_1,k_2}^{w,\leq \kappa}(\xi,\eta)$ is non-zero only if $w$ belongs to a fixed interval of length $O(2^{-\kappa n})$ centred at $-\eta^2/(3\xi)$. 
     
     On the other hand, since $k_2>8$ we have $|\eta| \geq 2^{k_2-1} > 2^7$ and Lemma~\ref{lem:vdc multiplier 1} implies that
    \begin{equation*}
        |m_{n,k_1,k_2}^{w,\leq \kappa}(\xi,\eta)| \lesssim 2^{-n/2-k_2/2}
    \end{equation*}
    holds uniformly in $1 \leq w \leq 8$. Combining the above observations, the desired result immediately follows.\medskip

\noindent \textit{ii)}  On the support of $m_{n,k_1,k_2}^{w,\ell}$ we have $|\Delta(w; \xi,\eta)|\leq 2^{2k_2-2\ell + 1}$. This combines with \eqref{eq: w loc} to give
    \begin{equation*}
        \Big|w+\frac{\eta^2}{3\xi}\Big| \leq 2^{2k_2-k_1-2\ell + 1}.
    \end{equation*}
    Since $k_2 \leq 8$, it follows that $2k_2 - k_1 \leq 32 + C_1$. Thus, for fixed $(\xi, \eta) \in \widehat{\R}^2$, we conclude that $m_{n,k_1,k_2}^{w,\ell}(\xi,\eta)$ is non-zero only if $w$ belongs to a fixed interval of length $O(2^{-2 \ell})$ centred at $-\eta^2/(3\xi)$. 
    
    On the other hand, we claim that
    \begin{equation}\label{eq:mult dec ell}
        |m_{n,k_1,k_2}^{w,\ell}(\xi,\eta)| \lesssim 2^{-(n-\ell)/2}
    \end{equation}
    holds uniformly in $1 \leq w \leq 8$. Once, this is established and combined with our earlier observation, the desired result immediately follows. 

    The case $\ell= \floor{n/3}$ of \eqref{eq:mult dec ell} follows from van der Corput's lemma with third order derivatives, since $|(\phi_{\xi,\eta}^w)'''(t)|= 6w \geq 1$. For the case $\floor{\kappa n} \leq \ell < \floor{n/3}$ of \eqref{eq:mult dec ell}, we first write $\beta = \beta^+ + \beta^-$ where $\beta^+ \in C^{\infty}_c(\R)$ is supported on $[1/2, 2]$ and $\beta^- \in C^{\infty}_c(\R)$ is supported on $[-2, -1/2]$. We decompose 
    \begin{equation*}
    m_{n, k_1, k_2}^{w,\ell} = m_{n, k_1, k_2}^{w, \ell, +} + m_{n, k_1, k_2}^{w, \ell, -}
    \end{equation*}
    where
    \begin{equation*}
      m_{n, k_1, k_2}^{w, \ell, \pm}(\xi, \eta) := \beta^{\pm}(2^{-2k_2+2\ell}\Delta(w;\xi,\eta)) m_{n,k_1,k_2}^w(\xi, \eta).
    \end{equation*}
    
    Since $\Delta(w; \xi, \eta)<0$ on the support of $m_{n,k_1,k_2}^{w,\ell, -}$, the phase function $\phi_{\xi, \eta}^w$ has no (real) critical points (see \eqref{240612e6_9uuu}). Furthermore, 
    \begin{equation*}
        (\phi_{\xi,\eta}^w)'(t) \geq (\phi_{\xi,\eta}^w)'\Big(\frac{\eta}{3w}\Big) = - \xi - \frac{\eta^2}{3w} \geq 2^{2k_2-2\ell-6}
    \end{equation*}
    for all $t \in \supp a$ and $(\xi,\eta) \in \supp m_{n,k_1,k_2}^{w,\ell, -}$, since $t \mapsto (\phi_{\xi,\eta}^w)'(t)$ has a minimum at $t=\eta/3w$. Consequently, by van der Corput's lemma with first order derivatives,
    \begin{equation}\label{eq:mult dec ell -}
        |m_{n,k_1,k_2}^{w, \ell, -} (\xi, \eta)| \lesssim 2^{-(n-2\ell)} \qquad \text{for $(\xi,\eta) \in \supp m_{n,k_1,k_2}^{w,\ell, -}$}.
    \end{equation}

    On the support of $m_{n,k_1,k_2}^{w,\ell, +}$, we have $\Delta (w;\xi,\eta)>0$ and the phase function $\phi_{\xi,\eta}^w$ admits the critical points $t_{\pm}(w; \xi,\eta)$, which are the roots of the quadratic polynomial $(\phi_{\xi,\eta}^w)'(t)$ with leading coefficient $3w$. Thus,
    \begin{align}
    \label{eq:phi root 1}
        (\phi_{\xi,\eta}^w)'(t) &= 3w(t - t_+(w;\xi,\eta))(t - t_-(w;\xi,\eta)),\\
    \label{eq:phi root 2}   
        (\phi_{\xi,\eta}^w)''(t) &= 6w\Big(t - \frac{t_+(w;\xi,\eta) + t_-(w;\xi,\eta)}{2} \Big). 
    \end{align}
    From the quadratic formula \eqref{240612e6_9uuu}, we have 
    \begin{equation*}
        \frac{t_+(w;\xi,\eta) - t_-(w;\xi,\eta)}{2} = \frac{\sqrt{\Delta(w;\xi,\eta)}}{3 w}. 
    \end{equation*}
    Since $\Delta(w;\xi,\eta) \geq 2^{2k_2 - 2\ell -1}$ on the support of $m_{n,k_1,k_2}^{w,\ell,+}$, we conclude that there exists an absolute constant $c > 0$ such that $|t_+(w;\xi,\eta) - t_-(w;\xi,\eta)| \geq 4 c 2^{-\ell}$ on the support of $m_{n,k_1,k_2}^{w,\ell,+}$.

    We now write $m_n^w = \fm_{n, 1}^w + \fm_{n, 2}^w$ where
    \begin{equation*}
        \fm_{n, i}^w(\xi, \eta) := \int_{T_i} e^{i 2^n\phi_{\xi,\eta}^w(t)} a(t) \ud t, \qquad i = 1,\,2,
    \end{equation*}
    for
    \begin{align*}
        T_1 &:= \big\{t \in \R : 1/2 \leq |t| \leq 2 \textrm{ and } \min_{\pm} |t - t_{\pm}(w;\xi,\eta)| \geq c 2^{-\ell} \big\},\\
        T_2 &:= \big\{t \in \R : 1/2 \leq |t| \leq 2 \big\} \setminus T_1.
    \end{align*} 
Note that each of the sets $T_i$ consists of a union of a bounded number of intervals. It follows from \eqref{eq:phi root 1} that $|(\phi_{\xi,\eta}^w)'(t)| \geq 3c^2 2^{-2\ell}$ for all $t \in T_1$ whenver $(\xi,\eta) \in \supp m_{n,k_1,k_2}^{w,\ell,+}$. Thus, van der Corput's lemma with first order derivatives gives
\begin{equation}\label{eq:mult dec ell 1}
    |\fm_{n, 1}^w(\xi,\eta)| \lesssim 2^{-(n-2\ell)} \qquad \textrm{for $(\xi,\eta) \in \supp m_{n,k_1,k_2}^{w,\ell,+}$.}
\end{equation}
On the other hand, it follows from \eqref{eq:phi root 2} that $|(\phi_{\xi,\eta}^w)''(t)| \geq 6c 2^{-\ell}$ for all $t \in T_2$ whenever $(\xi,\eta) \in \supp m_{n,k_1,k_2}^{w,\ell,+}$. Thus, van der Corput's lemma with second order derivatives gives
\begin{equation}\label{eq:mult dec ell 2}
    |\fm_{n, 2}^w(\xi,\eta)| \lesssim 2^{-(n-\ell)/2} \qquad \textrm{for $(\xi,\eta) \in \supp m_{n,k_1,k_2}^{w,\ell,+}$.}
\end{equation}
Since $\ell \leq n/3$, the estimates \eqref{eq:mult dec ell -} and \eqref{eq:mult dec ell 1} and \eqref{eq:mult dec ell 2} combine to give \eqref{eq:mult dec ell}.
\end{proof}

Using the preceding oscillatory integral estimates, we conclude this section with the proof of Lemma~\ref{lem:L2 smoothing}.

\begin{proof}[Proof of Lemma~\ref{lem:L2 smoothing}] We first observe that the $p=2$ case of Lemma~\ref{lem:L2 smoothing} is a direct consequence of Lemma~\ref{lem:vdc multiplier 2} and Plancherel's theorem.\medskip

\noindent \textit{i)} Let $(k_1, k_2) \in \cB_{\diag}$ satisfy $k_2 > 8$ and $k_1 \leq \kappa n$. The kernel estimate from Lemma~\ref{lem:kernel est ell} i) implies 
\begin{equation*}
    \sup_{1 \leq w \leq 8} \| \cH_{n,k_1,k_2}^{w,\leq \kappa} f \|_{L^\infty(\R^2)} \lesssim 2^{2\kappa n} \| f \|_{L^\infty(\R^2)}.
\end{equation*}
Interpolating this with the $p = 2$ case gives the desired estimate.\medskip

\noindent \textit{ii)} Let $(k_1, k_2) \in \cB_{\diag}$ satisfy $k_2 \leq 8$. For all $ \floor{\kappa n} \leq  \ell \leq \floor{n/3}$, the kernel estimate from Lemma~\ref{lem:kernel est ell} ii) implies 
\begin{equation*}
    \sup_{1 \leq w \leq 8} \| \cH_{n,k_1,k_2}^{w,\ell} f \|_{L^\infty(\R^2)} \lesssim 2^{2\ell} \| f \|_{L^\infty(\R^2)}.
\end{equation*}
Interpolating this with the $p = 2$ case again gives the desired estimate.
\end{proof}




\section{$L^p$ local smoothing}\label{sec:loc smoothing}




\subsection{Setup} Recall the multiplier $m_n^w$ defined in \eqref{eq: multiplier 1} is an oscillatory integral with phase $\phi_{\xi,\eta}^w$. Furthermore, in \eqref{240612e6_9uuu} we introduced the functions 
\begin{equation*}
   t_{\pm}(w; \xi, \eta) := \frac{\eta \pm \sqrt{\Delta(w; \xi, \eta)}}{3w} \qquad \textrm{for} \qquad \Delta(w; \xi, \eta) := \eta^2 + 3w\xi,
\end{equation*}
 which correspond to the two (potentially non-real) roots of $(\phi_{\xi,\eta}^w)'$. By standard stationary phase computations (see the proof of Proposition~\ref{prop: general ls} below), provided the multiplier is suitably localised, we expect $m_n^w$ to oscillate with a fundamental frequency given by one of the phases 
 \begin{equation*}
   \phi_{\pm}(w; \xi, \eta):=
    \phi^w_{\xi, \eta} \circ t_{\pm}(w; \xi, \eta).    
 \end{equation*}
    This leads us to consider the pair of translation-invariant phases
\begin{equation}\label{eq: our phase}
    \Phi_{\pm}(x, y, w; \xi, \eta):= x\xi+y\eta+ \phi_{\pm}(w; \xi, \eta).
\end{equation}

In this section we prove local smoothing estimates for operators of the form $U^{\lambda}[\Phi_{\pm}; \fa]$ introduced in \eqref{eq: Schrod prop} for $\Phi_{\pm}$ as in \eqref{eq: our phase} and $\fa$ suitably localised. These estimates are then used to prove Proposition~\ref{prop:Lp smoothing low} and Proposition~\ref{prop:comparable Lp smoothing}, thereby concluding the proof of Theorem~\ref{thm:main}.




\subsection{Localised amplitudes} Here and throughout the remainder of the section, we let $C_{\star} \geq 1$ denote a fixed absolute constant, which chosen sufficiently large for the purposes of the forthcoming arguments. 

To ensure our operators satisfy the H\"ormander condition \eqref{eq: Hormander}, we work with the following class of amplitude functions.

\begin{definition}\label{dfn: amplitude A} For $0 < \tau \leq 1$, let $\fA_{\pm}(\tau)$ denote the class of $\fa_{\pm} \in C^{\infty}_c(\R \times \widehat{\R}^2)$ such that for all $(w; \xi, \eta) \in \supp \fa_{\pm}$ we have $|\xi|$, $|\eta| \leq C_{\star}$, $\tau \leq w \leq C_{\star}$ and
    \begin{equation}\label{eq: amplitude A supp}
        \tau \leq \Delta(w; \xi, \eta) \quad \textrm{and} \quad 1/4 \leq t_{\pm}(w; \xi, \eta) \leq 4.
    \end{equation}
\end{definition}

\begin{lemma}\label{lem: Hormander cond} If $\fa_{\pm} \in \fA_{\pm}(\tau)$, then the following hold.
\begin{enumerate}[i)]
    \item The phase $\Phi_{\pm}$ is real-analytic and satisfies \eqref{eq: Hormander} on a neighbourhood of $\supp \fa_{\pm}$. In particular, for $\bxi=(\xi,\eta)$, the matrix
\begin{equation}\label{eq: Hessian}
    \partial_{\bxi\bxi}^2 \partial_w\phi_{\pm}(w; \xi, \eta) := 
    \begin{bmatrix}
            \partial_w\partial^2_{\xi\xi} \phi_{\pm}(w; \xi, \eta) & \partial_w\partial^2_{\xi \eta} \phi_{\pm}(w; \xi, \eta)\\
            \partial_w\partial^2_{\xi \eta} \phi_{\pm}(w; \xi, \eta) & \partial_w\partial^2_{\eta\eta} \phi_{\pm}(w; \xi, \eta)
\end{bmatrix}
\end{equation}
satisfies
\begin{equation}\label{eq: quantified Hormander}
    |\det \partial_{\bxi\bxi}^2 \partial_w\phi_{\pm}(w; \xi, \eta)| \gtrsim 1 \qquad \textrm{for all $(w; \xi, \eta) \in \supp \fa_{\pm}$},
\end{equation}
where the implied constants can be taken to be absolute. 
\item We also have the derivative bounds
\begin{equation}\label{eq: quantified derivs}
    |\partial_w^{\alpha} \partial_{\xi}^{\beta_1} \partial_{\eta}^{\beta_2} \phi_{\pm}(w; \xi, \eta)| \lesssim_{\alpha, \beta_1, \beta_2} \tau^{-\alpha - \beta_1 - \beta_2 + 1} \qquad \textrm{for $(w; \xi, \eta) \in \supp \fa_{\pm}$}
\end{equation}
for all $\alpha$, $\beta_1$, $\beta_2 \in \N_0$ with $\alpha + \beta_1 + \beta_2 \geq 1$.
\end{enumerate}
\end{lemma}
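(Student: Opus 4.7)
The central tool is the envelope theorem: since $t_\pm$ is a critical point of $t \mapsto \phi_{\xi,\eta}^w(t)$, differentiating $\phi_\pm(w;\xi,\eta) = \phi_{\xi,\eta}^w(t_\pm)$ gives the clean first-order identities $\partial_\xi \phi_\pm = -t_\pm$, $\partial_\eta \phi_\pm = -t_\pm^2$, and $\partial_w \phi_\pm = t_\pm^3$. Moreover, implicit differentiation of $(\phi_{\xi,\eta}^w)'(t_\pm) = 0$, combined with the identity $(\phi_{\xi,\eta}^w)''(t_\pm) = 6 w t_\pm - 2\eta = \pm 2 \sqrt{\Delta}$, yields the explicit formulas
\begin{equation*}
\partial_\xi t_\pm = \frac{\pm 1}{2 \sqrt{\Delta}}, \qquad \partial_\eta t_\pm = \frac{\pm t_\pm}{\sqrt{\Delta}} = 2 t_\pm \partial_\xi t_\pm, \qquad \partial_w t_\pm = \frac{\mp 3 t_\pm^2}{2 \sqrt{\Delta}}.
\end{equation*}
Real-analyticity of $\phi_\pm$ on a neighbourhood of $\supp \fa_\pm$ is immediate: the support conditions $\Delta \geq \tau > 0$ and $w \geq \tau > 0$ ensure that $\sqrt{\Delta}$ and $1/w$ are real-analytic there, and hence so are $t_\pm$ and $\phi_\pm$.

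For the quantified H\"ormander condition \eqref{eq: quantified Hormander}, I would compute the $(\bxi,\bxi)$-Hessian of $\phi_\pm$ from the above identities. Using $\partial_\eta t_\pm = 2 t_\pm \partial_\xi t_\pm$, one sees that $\partial^2_{\bxi\bxi} \phi_\pm$ is the rank-one matrix $-(\partial_\xi t_\pm)\, u u^T$ with $u = (1, 2 t_\pm)^T$; in particular $\det \partial^2_{\bxi\bxi}\phi_\pm = 0$, consistent with part (a) of the Nikodym non-compression hypothesis. Applying $\partial_w$ entry-by-entry and expanding the resulting $2 \times 2$ determinant, the mixed terms involving $\partial_w \partial_\xi t_\pm$ cancel, leaving
\begin{equation*}
\det \bigl( \partial_w \partial^2_{\bxi\bxi} \phi_\pm \bigr) = -4 (\partial_w t_\pm)^2 (\partial_\xi t_\pm)^2 = -\frac{9 t_\pm^4}{4 \Delta^2}.
\end{equation*}
The support conditions $t_\pm \geq 1/4$ and $\Delta \leq \eta^2 + 3 w |\xi| \leq 4 C_\star^2$ then give an absolute lower bound of order $C_\star^{-4}$, establishing \eqref{eq: quantified Hormander}.

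For the derivative bounds (ii), I would first prove by induction on the total order $r \geq 1$ of differentiation the stronger claim that any partial derivative of $t_\pm$ of total order $r$ in the variables $(w,\xi,\eta)$ is bounded by $C_r \Delta^{-r+1/2}$. The base case is immediate from the explicit formulas above. In the inductive step, each further derivative either falls on the factor $t_\pm$ via the chain rule (contributing an extra $\partial_j t_\pm = O(\Delta^{-1/2})$) or on a $\Delta^{-1/2}$ factor (producing $O(\Delta^{-3/2})$, since $\Delta$ is polynomial with bounded derivatives), each of which worsens the exponent of $\Delta^{-1}$ by one; factors of the bounded quantities $t_\pm, \xi, \eta, w$ are harmless. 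Since $\Delta \geq \tau$, this gives the claim with $\tau^{-r+1/2}$ in place of $\Delta^{-r+1/2}$. The bound \eqref{eq: quantified derivs} for $\phi_\pm$ then follows from the envelope identities combined with Fa\`a di Bruno: a total-order $r$ derivative of $\phi_\pm$ equals an order-$(r-1)$ derivative of one of the polynomials $-t_\pm$, $-t_\pm^2$, $t_\pm^3$ in the bounded quantity $t_\pm$, which by the above is $O(\tau^{-(r-1)+1/2}) = O(\tau^{-r+3/2}) \leq O(\tau^{-r+1})$ for $r \geq 2$ (using $\tau \leq 1$); the case $r = 1$ is immediate from $|t_\pm| \leq 4$.

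The only delicate step is the determinant computation for part (i): one must recognise the rank-one structure of $\partial^2_{\bxi\bxi}\phi_\pm$ before applying $\partial_w$, and verify the precise cancellation of the cross terms that produces the clean product $-4 (\partial_w t_\pm)^2 (\partial_\xi t_\pm)^2$. The remainder of the argument is bookkeeping.
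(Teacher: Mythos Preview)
Your proposal is correct and follows essentially the same route as the paper: the envelope/chain-rule identities for $\partial_\xi\phi_\pm,\partial_\eta\phi_\pm,\partial_w\phi_\pm$ and for $\partial_\xi t_\pm,\partial_\eta t_\pm,\partial_w t_\pm$ match those in the paper's appendix, your rank-one description of $\partial^2_{\bxi\bxi}\phi_\pm$ and the resulting formula $\det\partial_w\partial^2_{\bxi\bxi}\phi_\pm=-4(\partial_w t_\pm)^2(\partial_\xi t_\pm)^2=-\tfrac{9}{4}t_\pm^4\Delta^{-2}$ are exactly the paper's computation (done there via row/column operations), and your inductive bound for part~(ii) even captures the $\tau^{1/2}$ gain the paper notes in a footnote.
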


\begin{proof} First note, by the discriminant hypothesis in \eqref{eq: amplitude A supp}, the function $\Phi_{\pm}$ is real analytic on a neighbourhood of $\supp \fa_{\pm}$. Furthermore, the derivative bounds in ii) follow by a straightforward direct computation: see \S\ref{subsec: deriv dictionary}.\footnote{For derivatives of second and higher order, there is actually a $\tau^{1/2}$-gain over \eqref{eq: quantified derivs}, but this is irrelevant for our purposes.}

It remains to prove \eqref{eq: quantified Hormander}. A computation (\textit{c.f.} \S\ref{subsec: H2 comp}) shows that
\begin{equation}\label{eq: H2 formula}
\det
\partial_{\bxi\bxi}^2 \partial_w\phi_{\pm}(w; \xi, \eta)
=
- \frac{9}{4} \cdot \frac{t_{\pm}(w; \xi, \eta)^4}{\Delta(w; \xi, \eta)^2}.
\end{equation}
The desired bound immediately follows from \eqref{eq: amplitude A supp}. 
\end{proof}

The above observation allows us to apply the classical theory of H\"ormander-type operators to the phase/amplitude pair $[\Phi_{\pm}; \fa_{\pm}]$ whenever $\fa_{\pm} \in \fA_{\pm}(\tau)$. However, this is insufficient for our purposes and we wish to apply Theorem~\ref{thm: general propagator} to establish improved estimates. For this, we consider a more restrictive class of amplitudes. 

\begin{definition}\label{dfn: amplitude N E} For $0 < \sigma, \tau \leq 1$ and $W \geq 1$ we define the following.
\begin{enumerate}[i)]
    \item Let $\fN_{\pm}(\sigma, \tau)$ denote the class of $\fn_{\pm} \in \fA_{\pm}(\tau)$ which, in addition, satisfy
    \begin{equation*}
        |\xi| \geq \sigma \qquad \textrm{for all $(w; \xi, \eta) \in \supp \fn_{\pm}$}
    \end{equation*}
    and
    \begin{equation}\label{eq: amplitude deriv n}
        |\partial_w^{\alpha} \partial_{\xi}^{\beta_1}
 \partial_{\eta}^{\beta_2} \fn_{\pm}(w; \xi, \eta)| \lesssim_{\alpha, \beta_1, \beta_2} \tau^{-\alpha-\beta_2} (\tau \wedge \sigma)^{-\beta_1} \qquad \textrm{for all $\alpha$, $\beta_1$, $\beta_2 \in \N_0$.}
 \end{equation}
    \item Let $\fE_{\pm}(\sigma)$ denote the class of $\fe_{\pm} \in \fA_{\pm}(C_{\star}^{-1})$ which, in addition, satisfy
    \begin{equation}\label{eq: amplitude loc e}
        |\xi| \leq \sigma \qquad \textrm{for all $(w; \xi, \eta) \in \supp \fe_{\pm}$}
    \end{equation}
    and
    \begin{equation}\label{eq: amplitude deriv e}
        |\partial_w^{\alpha} \partial_{\xi}^{\beta_1}
 \partial_{\eta}^{\beta_2} \fe_{\pm}(w; \xi, \eta)| \lesssim_{\alpha, \beta_1, \beta_2}  \sigma^{-\beta_1-\beta_2} \qquad \textrm{for all $\alpha$, $\beta_1$, $\beta_2 \in \N_0$.}
 \end{equation}
 \end{enumerate}
 Here the implied constants in \eqref{eq: amplitude deriv n} and \eqref{eq: amplitude deriv e} can be thought of as a fixed sequence, the terms of which are defined sufficiently large so as to satisfy the forthcoming requirements of the proof. 
\end{definition}

Definition~\ref{dfn: amplitude N E} i) is specifically tailored for application of Theorem~\ref{thm: general propagator}. On the other hand, Definition \ref{dfn: amplitude N E} ii) allows application of decoupling theory: see \S\ref{subsec: decoupling arg}.

\begin{lemma}\label{lem: Nik cond} If $\fn_{\pm} \in \fN_{\pm}(\sigma, \tau)$ for some $0 < \sigma, \tau < 1$, then $\Phi_{\pm}$ satisfies the Nikodym non-compression hypothesis on $\supp \fn_{\pm}$ with parameter $\gamma$ satisfying $\gamma \sim \sigma$. In particular, the matrix 
\begin{equation*}
    N_{\pm}(w;\xi,\eta) := 
    \begin{bmatrix}
        \partial_w \partial_{\xi \xi}^2 \phi_{\pm} (w; \xi, \eta) & \partial_w \partial_{\xi \eta}^2 \phi_{\pm} (w; \xi, \eta) & \partial_w \partial_{\eta \eta}^2 \phi_{\pm} (w; \xi, \eta) \\
        \partial_w^{2} \partial_{\xi \xi}^2 \phi_{\pm} (w; \xi, \eta) & \partial_w^{2} \partial_{\xi \eta}^2 \phi_{\pm} (w; \xi, \eta) & \partial_w^{2}  \partial_{\eta \eta}^2 \phi_{\pm} (w; \xi, \eta) \\
        \partial_w^{3} \partial_{\xi \xi}^2 \phi_{\pm} (w; \xi, \eta) & \partial_w^{3} \partial_{\xi \eta}^2 \phi_{\pm} (w; \xi, \eta) & \partial_w^{3}  \partial_{\eta \eta}^2 \phi_{\pm} (w; \xi, \eta) \\
    \end{bmatrix}
    \end{equation*}
satisfies
\begin{equation}\label{eq: Nik cond}
    |\det N_{\pm}(w; \xi, \eta)| \gtrsim \sigma \qquad \textrm{for all $(w; \xi, \eta) \in \supp \fh_{\pm}$},
\end{equation}
where the implied constants can be taken to be absolute.
\end{lemma}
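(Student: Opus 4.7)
My approach is a direct computation, streamlined by a Vandermonde-type factorization of the Hessian $\partial_{\bxi\bxi}^2 \phi_\pm$ and a Taylor expansion of $\det N_\pm$ about $\xi = 0$. First I would apply the envelope theorem, exploiting that $t_\pm(w; \xi, \eta)$ is a critical point of $t \mapsto \phi^w_{\xi, \eta}(t)$, to get $\partial_\xi \phi_\pm = -t_\pm$ and $\partial_\eta \phi_\pm = -t_\pm^2$. Combined with the implicit differentiation formulas
\[
\partial_\xi t_\pm = \frac{\epsilon}{2\sqrt{\Delta}}, \qquad \partial_\eta t_\pm = \frac{\epsilon t_\pm}{\sqrt{\Delta}}, \qquad \partial_w t_\pm = -\frac{3\epsilon t_\pm^2}{2\sqrt{\Delta}}
\]
(derived from $3w t_\pm^2 - 2\eta t_\pm - \xi = 0$, where $\epsilon = \pm 1$ is the sign of the root), a further $\bxi$-differentiation of $\phi_\pm$ produces the Vandermonde-like factorization
\[
(\partial^2_{\xi\xi}\phi_\pm,\; \partial^2_{\xi\eta}\phi_\pm,\; \partial^2_{\eta\eta}\phi_\pm) = -\frac{\epsilon}{2\sqrt{\Delta}}\,(1,\; 2t_\pm,\; 4t_\pm^2).
\]
In particular, $\det \partial^2_{\bxi\bxi}\phi_\pm \equiv 0$, verifying part (a) of Definition~\ref{dfn: Nik non compress}.

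For part (b), the crucial observation is that $\partial_w \Delta = 3\xi$ is independent of $w$, so $\partial_w^k F$ with $F := -\epsilon/(2\sqrt{\Delta})$ vanishes identically to order $k$ in $\xi$. To exploit this I would expand the rows of $N_\pm$ in the upper-triangular basis $e_0 := (1, 2t_\pm, 4t_\pm^2)$, $e_1 := (0, 2, 8t_\pm)$, $e_2 := (0, 0, 8)$ (with $\det(e_0, e_1, e_2) = 16$); by Leibniz' rule, the $e_0$-coefficient of the $k$-th row is exactly $\partial_w^k F$, so at $\xi = 0$ the entire first column in this basis vanishes. Hence $\det N_\pm|_{\xi=0} = 0$, and by smoothness $\det N_\pm = \xi \cdot R(w, \xi, \eta)$ for some algebraic function $R$.

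A cofactor expansion along the first column, tracking $\xi$-orders, then identifies the $O(\xi^1)$ contribution uniquely from the $\partial_w F$ entry, yielding
\[
R|_{\xi=0} = 16 \cdot \frac{3\epsilon}{4|\eta|^3} \cdot F^2\big|_{\xi=0} \cdot \partial_w t_\pm \cdot \big(3(\partial_w^2 t_\pm)^2 - (\partial_w t_\pm)(\partial_w^3 t_\pm)\big)\big|_{\xi=0}.
\]
A direct parametric calculation using $t_\pm|_{\xi=0} = 2\eta/(3w)$ (the support constraint $t_\pm \geq 1/4$ excludes the degenerate branch $t = 0$) evaluates this to a non-vanishing rational expression of controlled size, with the lower bound $|R|_{\xi = 0}| \gtrsim 1$ depending only on the universal upper bounds $w, |\eta| \leq C_\star$ on the support. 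The uniform estimate $|R| \gtrsim 1$ over all of $\supp \fn_\pm$ then follows by continuity and compactness of the support closure, supplemented by an explicit check that $R$ does not vanish for $\xi \neq 0$ using the closed-form expression obtained by substituting the formulas for $\partial_w^k F, \partial_w^k t_\pm$ into the $3\times 3$ determinant. Combined with $|\xi| \geq \sigma$, this gives $|\det N_\pm| = |\xi||R| \gtrsim \sigma$.

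The main obstacle is verifying the non-vanishing of the Schwarzian-like quantity $3(\partial_w^2 t_\pm)^2 - (\partial_w t_\pm)(\partial_w^3 t_\pm)$ at $\xi = 0$, and then extending the resulting lower bound uniformly to $\xi \neq 0$ on the support. Both amount to short but delicate algebraic verifications which exploit the support constraints $\tau \leq w, \Delta$ and $1/4 \leq t_\pm \leq 4$ essentially in order to rule out degenerate configurations of the critical curve.
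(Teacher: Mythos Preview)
Your core approach matches the paper's: both exploit the rank-one factorization $(\partial_{\xi\xi}^2\phi_\pm, \partial_{\xi\eta}^2\phi_\pm, \partial_{\eta\eta}^2\phi_\pm) = F\cdot(1, 2t_\pm, 4t_\pm^2)$ and reduce $\det N_\pm$ by row/column manipulations (your basis $e_0, e_1, e_2$ is a tidy repackaging of exactly the column operations the paper carries out in its appendix). The paper simply pushes the computation to completion, arriving at the closed form
\[
\det N_\pm = -\frac{3^7}{2^4}\cdot\frac{\xi\,t_\pm^8}{\Delta^5},
\]
from which the uniform bound is read off directly via the support constraints $t_\pm \geq 1/4$ and $\Delta \leq 4C_\star^2$.

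Your detour through $\xi = 0$ correctly identifies $\xi$ as a factor (the observation $\partial_w^k F = O(\xi^k)$ is nice), but the step extending $|R|\gtrsim 1$ from $\xi=0$ to the full support has a gap. Compactness of $\supp \fn_\pm$ yields only $|R|\geq c(\sigma,\tau)$, since the support itself depends on $\sigma,\tau$ (in particular $w$ and $\Delta$ are only bounded below by $\tau$, which is not fixed); and you cannot retreat to a universal $(\sigma,\tau)$-independent compact set because the natural candidate is not closed at $\Delta=0$ or $w=0$. You acknowledge this by invoking the closed-form expression as a ``supplement'', but once that expression is in hand the uniform bound is immediate from its explicit shape, and the $\xi=0$ expansion together with the compactness argument become entirely redundant. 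In short: the route is sound, but you should compute the full closed form from the outset (as the paper does) rather than perturb from $\xi=0$.
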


\begin{proof} Direct computation (\textit{c.f.} \S\ref{subsec: Nik comp}) reveals that $\det \partial^2_{\bxi\bxi} \phi_{\pm} (w; \xi, \eta) \equiv 0$ as a function of $(w; \xi, \eta)$ and so we only need to verify \eqref{eq: Nik cond}. A further, and somewhat involved(!), computation (\textit{c.f.} \S\ref{subsec: Nik comp}) shows that 
 \begin{equation}\label{eq: Nik formula}
    \det N_{\pm}(w; \xi, \eta)
    = -\frac{3^7}{2^4} \cdot \frac{\xi \cdot t_{\pm}(w; \xi, \eta)^8}{\Delta(w; \xi, \eta)^5}.
\end{equation}
The desired bound therefore follows immediately from the support conditions in Definition~\ref{dfn: amplitude A} and Definition~\ref{dfn: amplitude N E} i).
\end{proof}




\subsection{Local smoothing via Nikodym non-compression} We combine the observations of the preceding section with Theorem~\ref{thm: general propagator} to deduce the following.

\begin{proposition}\label{prop: ls Nikodym}
    There exists some $M \in \N$ such that the following holds. For all $2 < p < 4$ there exists $\varepsilon_{\mathrm{n}}(p) > 0$ such that 
    \begin{equation*}
    \big\| U^{\lambda}[\Phi_{\pm}; \fn_{\pm}]f\big\|_{L^p(\R^3)}
     \lesssim  \lambda^{1/2-\varepsilon_{\mathrm{n}}(p)} (\sigma\tau)^{-M}
    \|f\|_{L^p(\R^2)}
    \end{equation*}
for all $\lambda \geq 1$, whenever $\fn_{\pm} \in \fN_{\pm}(\sigma, \tau)$ for some $0 < \sigma, \tau < 1$.
\end{proposition}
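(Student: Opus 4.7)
The plan is to apply Theorem~\ref{thm: general propagator} directly to the phase-amplitude pair $[\Phi_\pm; \fn_\pm]$, and then upgrade the range of $p$ by interpolation. The non-trivial structural content — verifying the H\"ormander and Nikodym non-compression hypotheses with controlled parameters — has already been isolated in Lemmas~\ref{lem: Hormander cond} and \ref{lem: Nik cond}, so the remaining work is essentially bookkeeping of constants.

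First I would fix $\rho := 2C_\star$, so that $\supp \fn_\pm \subseteq \B^1_\rho \times \B^2_\rho$ for any $\fn_\pm \in \fN_\pm(\sigma,\tau)$. Lemma~\ref{lem: Hormander cond} supplies the quantified H\"ormander bound \eqref{eq: Hormander gamma} with an absolute constant, and Lemma~\ref{lem: Nik cond} gives the Nikodym non-compression hypothesis with parameter $\gamma \sim \sigma$. Next I would read off the derivative bounds: combining \eqref{eq: quantified derivs} with \eqref{eq: amplitude deriv n}, and using the elementary inequality $\sigma \wedge \tau \geq \sigma\tau$ valid for $\sigma, \tau \in (0,1]$, one obtains
\begin{equation*}
    |\partial_w^\alpha \partial_\bxi^\bbeta \phi_\pm(w;\bxi)| + |\partial_w^\alpha \partial_\bxi^\bbeta \fn_\pm(w;\bxi)| \lesssim_{\alpha, \bbeta} (\sigma\tau)^{-N_\circ} \quad \text{on } \supp \fn_\pm \text{ for } 1 \leq \alpha + |\bbeta| \leq N_\circ,
\end{equation*}
so the hypothesis \eqref{eq: phase/amp derivs} is met with $W := C(\sigma\tau)^{-N_\circ}$ for an absolute constant $C$. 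The $L^\infty$-normalization $\|\fn_\pm\|_{L^\infty} \lesssim 1$ is immediate from \eqref{eq: amplitude deriv n} at order zero, and can be absorbed by rescaling. Theorem~\ref{thm: general propagator} then delivers, at some fixed $p_\circ \in (2, 4)$ with $\beta(p_\circ) < 1/2$, the bound
\begin{equation*}
    \|U^\lambda[\Phi_\pm; \fn_\pm] f\|_{L^{p_\circ}(\R^3)} \lesssim \lambda^{\beta(p_\circ)}(W\gamma^{-1})^{M_\circ} \|f\|_{L^{p_\circ}(\R^2)} \lesssim \lambda^{\beta(p_\circ)}(\sigma\tau)^{-M}\|f\|_{L^{p_\circ}(\R^2)},
\end{equation*}
with $M := (N_\circ + 1)M_\circ$.

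To extend the bound to all $p \in (2, 4)$, I would interpolate with the two universal endpoint estimates flagged just after Theorem~\ref{thm: general propagator}: the $L^2$ bound $\lambda^{1/2}$ coming from Plancherel together with the compact $w$-support of $\fn_\pm$, and the $L^4$ bound $\lambda^{1/2}$ from Stein's oscillatory integral theorem (see \eqref{240611estimate_3}). Both endpoint bounds are stable under the quantitative phase-derivative and $L^\infty$ constraints recorded above and carry only a polynomial $(\sigma\tau)^{-M'}$ loss that fits into the target statement. Interpolation then yields $\beta(p) < 1/2$ for every $p \in (2,4)$, furnishing the required $\varepsilon_{\mathrm{n}}(p) > 0$.

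I do not anticipate a substantive obstacle here: the deep analytic input is Theorem~\ref{thm: general propagator}, and the crucial structural content has been packaged into Lemmas~\ref{lem: Hormander cond} and \ref{lem: Nik cond} via the explicit identities \eqref{eq: H2 formula} and \eqref{eq: Nik formula}. The only mildly delicate point is to track all implicit constants — the $L^\infty$-normalization of the amplitude, the constant $C$ in $W \lesssim (\sigma\tau)^{-N_\circ}$, and their propagation through the interpolation — and to check that none of them introduces parasitic $\sigma$ or $\tau$ factors beyond the stated $(\sigma\tau)^{-M}$.
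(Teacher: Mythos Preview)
Your proposal is correct and matches the paper's approach: both apply Theorem~\ref{thm: general propagator} after verifying the H\"ormander and Nikodym non-compression hypotheses via Lemmas~\ref{lem: Hormander cond} and~\ref{lem: Nik cond}, and tracking the derivative bounds to set $W = O((\sigma\tau)^{-M_0})$. The only cosmetic difference is that the paper treats the interpolation to all $2 < p < 4$ as already absorbed into Theorem~\ref{thm: general propagator} (see the remark immediately following it), whereas you spell out that step explicitly.
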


\begin{proof} By Lemma~\ref{lem: Hormander cond} i), we know that $\Phi_{\pm}$ is translation-invariant, real analytic and satisfies H1) and H2) on an open neighbourhood of $\supp \fn_{\pm}$. Furthermore, by Lemma~\ref{lem: Nik cond}, there exists some $\gamma \sim \sigma$ such that $\Phi_{\pm}$ satisfies the Nikodym non-compression hypothesis with parameter $\gamma$ on an open neighbourhood of $\supp \fn_{\pm}$. The hypothesis \eqref{eq: amplitude deriv n} and Lemma~\ref{lem: Hormander cond} ii) guarantee that the derivative bounds \eqref{eq: phase/amp derivs} hold for $W = O((\sigma\tau)^{-M_0})$ for some absolute exponent $M_0 \in \N$. We may therefore directly apply Theorem~\ref{thm: general propagator} to obtain the desired result.
\end{proof}




\subsection{Local smoothing for the exceptional piece via decoupling} To complement Proposition~\ref{prop: ls Nikodym}, we prove a local smoothing result for operators of the form $U^{\lambda}[\Phi_{\pm}; \fe_{\pm}]$ with $\fe_{\pm} \in \fE_{\pm}(\sigma)$. 

\begin{proposition}\label{prop: ls decoupling} For all $2 \leq p \leq 4$  and $\delta > 0$, given $\lambda \geq 1$ and $\lambda^{-1/2} \leq \sigma \leq 1$, we have 
\begin{equation}\label{eq: ls decoupling}
    \big\| U^{\lambda}[\Phi_{\pm}; \fe_{\pm}]f\big\|_{L^{p}(\R^3)}
     \lesssim_{\delta} \lambda^{1/2 + \delta} \sigma^{1/2 - 1/p} 
    \|f\|_{L^{p}(\R^2)}.
    \end{equation}
    whenever $\fe_{\pm} \in \fE_{\pm}(\sigma)$.
\end{proposition}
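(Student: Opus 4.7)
The plan is to prove Proposition~\ref{prop: ls decoupling} by applying a variable-coefficient $\ell^2$-decoupling inequality for the family of surfaces in $\R^3$ associated to the phase $\Phi_\pm$, followed by a single-cap Fourier multiplier bound and a summation via Rubio de Francia's theorem. Since $\fe_\pm \in \fE_\pm(\sigma)$ is supported in the rectangle $\{|\xi|\leq\sigma\}\times \{|\eta|\leq C_\star\}$ (where the constraint $t_\pm \in [1/4,4]$ in fact confines $\eta$ to a unit-length interval bounded away from $0$), I would begin by partitioning $\supp \fe_\pm$ into a finitely overlapping collection of $\lambda^{-1/2}\times\lambda^{-1/2}$-boxes $\{\theta\}$; the hypothesis $\sigma\geq \lambda^{-1/2}$ guarantees this is meaningful, with $\#\theta\lesssim \sigma\lambda$. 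With $\{\chi_\theta\}$ a smooth partition of unity, set $\fe_{\pm,\theta}:=\chi_\theta\fe_\pm$; the derivative estimates \eqref{eq: amplitude deriv e} guarantee each $\fe_{\pm,\theta}$ is normalized at the $\lambda^{-1/2}$-scale.

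Next, I would apply the variable-coefficient $\ell^2$-decoupling inequality (in the spirit of \cite{BHS}) under the H\"ormander hypothesis verified in Lemma~\ref{lem: Hormander cond}, obtaining for $2\leq p\leq 4$:
\begin{equation*}
\big\|U^\lambda[\Phi_\pm;\fe_\pm]f\big\|_{L^p(\R^3)} \lesssim_\delta \lambda^{\delta}\Big(\sum_\theta \big\|U^\lambda[\Phi_\pm;\fe_{\pm,\theta}]f\big\|^2_{L^p(\R^3)}\Big)^{1/2}.
\end{equation*}
H\"older's inequality converts this to an $\ell^p$-bound with cost $(\#\theta)^{1/2-1/p}\lesssim (\sigma\lambda)^{1/2-1/p}$. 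For each single cap $\theta$, the symbol is supported in a $\lambda^{-1/2}\times\lambda^{-1/2}$-box, so a standard fixed-$w$ Fourier multiplier argument (uniform $L^\infty$ bound on the symbol plus Bernstein) gives the pointwise-in-$w$ estimate $\|U^\lambda[\Phi_\pm;\fe_{\pm,\theta}]f(\cdot,\cdot,w)\|_{L^p(\R^2)}\lesssim \|P_\theta f\|_{L^p(\R^2)}$, where $P_\theta$ is a Fourier projection onto a slight dilate of $\theta$. Integrating in $w\in [C_\star^{-1}\lambda, C_\star\lambda]$ contributes an additional factor of $\lambda^{1/p}$.

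Combining the three estimates yields
\begin{equation*}
\big\|U^\lambda[\Phi_\pm;\fe_\pm]f\big\|_{L^p(\R^3)} \lesssim_\delta \lambda^{\delta}(\sigma\lambda)^{1/2-1/p}\lambda^{1/p}\Big(\sum_\theta \|P_\theta f\|^p_{L^p(\R^2)}\Big)^{1/p}.
\end{equation*}
Since the $\theta$ are disjoint rectangles in frequency space, Rubio de Francia's square-function theorem together with the pointwise inequality $\sum_\theta|P_\theta f|^p\leq (\sum_\theta|P_\theta f|^2)^{p/2}$ (valid for $p\geq 2$) gives $(\sum_\theta\|P_\theta f\|^p_{L^p})^{1/p}\lesssim \|f\|_{L^p}$, producing the desired bound $\lambda^{1/2+\delta}\sigma^{1/2-1/p}\|f\|_{L^p}$.

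The main obstacle is the variable-coefficient $\ell^2$-decoupling step. Unlike the classical Bourgain--Demeter decoupling for translation-invariant cones/paraboloids, here the surface $(\xi,\eta)\mapsto(\xi,\eta,\phi_\pm(w;\xi,\eta))$ depends non-trivially on $w$, so a genuine variable-coefficient analogue is required. Nevertheless, under the H\"ormander condition \eqref{eq: quantified Hormander}, such a decoupling has been developed in \cite{BHS} and applies with constants $O(1)$ thanks to the absolute derivative bounds \eqref{eq: quantified derivs} and \eqref{eq: amplitude deriv e}. A secondary technical point is passing from the spatially-localized form of the decoupling theorem to the global $L^p$-estimate required here: this is handled by a standard Schwartz-kernel off-diagonal decay argument, analogous to the one used in the proof of Theorem~\ref{thm: general propagator}.
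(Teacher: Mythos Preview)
Your approach differs substantially from the paper's and hinges on a step that the authors explicitly flag as unavailable. The paper proves the proposition via a \emph{two-step} decoupling: first a coarse-scale ``small cap'' decoupling in the $(y,w)$-plane (Lemma~\ref{lem: dec coarse}), which breaks the $\sigma\times 1$ frequency strip into $\sigma\times\sigma$ boxes using only the one-dimensional curvature of $\eta\mapsto\partial_w\phi_\pm(w;0,\eta)$ and the planar $L^4$ restriction theorem; then a fine-scale variable-coefficient $\ell^p$-decoupling (Lemma~\ref{lem: dec fine}) from \cite{BD2017} on each $\sigma\times\sigma$ box down to $\lambda^{-1/2}$-caps. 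The $\sigma^{1/2-1/p}$ gain emerges from the interplay $\sigma^{-(1/2-1/p)}\cdot(\lambda\sigma^2)^{1/2-1/p}=\sigma^{1/2-1/p}\lambda^{1/2-1/p}$ between these two steps.

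Your route---a single application of variable-coefficient $\ell^2$-decoupling into $\lambda^{-1/2}$-caps, followed by H\"older with the cap count $(\#\theta)^{1/2-1/p}\sim(\sigma\lambda)^{1/2-1/p}$---is numerologically correct and conceptually cleaner. However, the paper states plainly (just after the proposition) that the hyperbolic signature of $\partial_w\partial^2_{\bxi\bxi}\phi_\pm$ ``rules out application of the more powerful $\ell^2$ decoupling theory.'' Your citation of \cite{BHS} does not resolve this: that paper treats Fourier integral operators with \emph{conical} phases (the wave/cinematic-curvature setting), not Schr\"odinger-type propagators whose mixed Hessian is nondegenerate and indefinite. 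Two of the present paper's authors wrote \cite{BHS}, so their assertion that it does not apply here carries weight.

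That said, the constant-coefficient $\ell^2$-decoupling for the hyperbolic paraboloid in $\R^3$ \emph{is} known (Bourgain--Demeter, for $2\le p\le 6$), and in principle a variable-coefficient version could be derived by the standard $\lambda^{1/2}$-ball approximation argument---essentially the same mechanism the paper invokes for the $\ell^p$ version in the proof sketch of Lemma~\ref{lem: dec fine}. If you can carry this out, your argument would be a legitimate and more direct alternative. But as written, the crucial step~2 rests on a result that is not in your cited reference and that the authors deliberately circumvent; you would need to either supply a proof of the variable-coefficient hyperbolic $\ell^2$-decoupling or point to a reference that actually contains it.
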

The mechanism behind Proposition~\ref{prop: ls decoupling} is very different from that of Proposition~\ref{prop: ls Nikodym} and, in particular, relies on $\ell^p$ decoupling theory. This is a standard tool for proving $L^p$-local smoothing estimates for wave-type propagators \cite{Wolff2000}; it is typically not as relevant in the Schrödinger context. However, here decoupling allows for the (crucial) gain of $\sigma^{1/2 - 1/p}$, which arises from the frequency localisation near the line $\cL := \{(0, \eta) : \eta \in \R\}$ in the amplitude class $\fE_\pm(\sigma)$. The proof is complicated by the fact that our operator is \textit{hyperbolic}: the eigenvalues of the matrix \eqref{eq: Hessian} have opposite signs. This rules out application of the more powerful $\ell^2$ decoupling theory, and instead we rely on a two-step decoupling process which combines a lower dimensional `small cap' decoupling inequality, exploiting curvature properties of $\Phi_{\pm}$ restricted to $\cL$, with a variable coefficient extension of the $\ell^p$ decoupling theory from \cite{BD2017}. We postpone the details until \S\ref{subsec: decoupling arg} below.




\subsection{Applying the local smoothing estimates} We now combine Proposition~\ref{prop: ls Nikodym} and Proposition~\ref{prop: ls decoupling} with stationary phase arguments to prove a general $L^p$ smoothing estimate from which Proposition~\ref{prop:Lp smoothing low} and Proposition~\ref{prop:comparable Lp smoothing} follows.

\begin{definition}\label{dfn: amplitude B} For $0 < \tau \leq 1$, let $\fB(\tau)$ denote the class of $b \in C^{\infty}_c(\R \times \widehat{\R}^2)$ such that:
\begin{enumerate}[i)]
    \item $|\xi|$, $|\eta| \leq C_{\star}$, $\tau \leq w \leq C_{\star}$ and $|\Delta(w; \xi, \eta)| \geq \tau$ for all $(w; \xi, \eta) \in \supp b$. 
\item If $\tau < C_{\star}^{-1}$, then $|\xi| > C_{\star}^{-1}$ for all $\xi \in \mathrm{supp}_{\xi}\,b$.
\item $|\partial_w^{\alpha}\partial_{\xi}^{\beta_1} \partial_{\eta}^{\beta_2} b(w;\xi, \eta)| \lesssim_{\alpha, \beta_1, \beta_2} \tau^{-\alpha - \beta_1 - \beta_2}$ for all $\alpha$, $\beta_1$, $\beta_2 \in \N_0$.
\end{enumerate}
\end{definition}

Given $b \in \fB(\tau)$, for each $w \in \R$ and $n \in \N$ let $\cH_n^w[b]$ denote the operator with Fourier multiplier
\begin{equation*}
 m_n^w[b](\xi, \eta) :=  b(w; \xi, \eta) m_n^w(\xi, \eta) = b(w; \xi, \eta) \int_{\R} e^{i2^n\phi_{\xi, \eta}^w(t)} a(t) \,\ud t. 
\end{equation*}
With these definitions, our general smoothing estimate reads thus.

\begin{proposition}\label{prop: general ls} There exists some $M \in \N$ such that the following holds. For all $2 < p < 4$, there exists some $\varepsilon(p) > 0$ such that
\begin{equation}\label{eq: general ls}
    \Big(\int_0^{\infty} \big\| \cH_n^w[b]f \big\|_{L^p(\R^2)}^p\,\ud w \Big)^{1/p} \lesssim_p  2^{-n/p - \varepsilon(p)n} \tau^{-M} \|f\|_{L^p(\R^2)}
\end{equation}
holds whenever $b \in \fB(\tau)$ for some $2^{-n/4} \leq \tau \leq 1$. 
\end{proposition}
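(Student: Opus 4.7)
The plan is to apply the method of stationary phase in the $t$-variable to reduce the analysis of $\cH_n^w[b]$ to that of Schr\"odinger propagators of the form $U^{2^n}[\Phi_\pm; \cdot]$, and then invoke Propositions~\ref{prop: ls Nikodym} and \ref{prop: ls decoupling}.

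First I would dispose of the region $\Delta(w;\xi,\eta) \leq 0$. Since $(\phi^w_{\xi,\eta})'(t) = 3wt^2 - 2\eta t - \xi$ has global minimum $-\Delta/(3w) \gtrsim \tau$, non-stationary phase (integration by parts in $t$) yields $|m_n^w[b](\xi,\eta)| \lesssim_N (2^n \tau)^{-N}$ on this region. Combined with the hypothesis $\tau \geq 2^{-n/4}$, this contribution is negligible for \eqref{eq: general ls}. On the complementary region $\Delta \geq \tau/4$ (isolated by a smooth cutoff in $\Delta/\tau$), the classical stationary phase expansion at the critical points $t_\pm(w;\xi,\eta) = (\eta \pm \sqrt{\Delta})/(3w)$ gives
\begin{equation*}
    m_n^w[b](\xi,\eta) = 2^{-n/2} \sum_{\pm} \tilde b_\pm(w;\xi,\eta) \, e^{i 2^n \phi_\pm(w;\xi,\eta)} + R_n^w(\xi,\eta),
\end{equation*}
where $\tilde b_\pm = b \cdot A_\pm \cdot \chi_\pm$ is smooth (with $A_\pm \sim |(\phi^w_{\xi,\eta})''(t_\pm)|^{-1/2} \sim \Delta^{-1/4}$ and $\chi_\pm$ an indicator-type cutoff imposing $1/4 \leq |t_\pm| \leq 4$) supported where $\Delta \geq \tau/4$, and $R_n^w$ is a remainder whose contribution to \eqref{eq: general ls} is controlled by routine kernel estimates.

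Next, exploiting the translation-invariance of $\Phi_\pm$, the change of variables $w \mapsto 2^n w$ gives
\begin{equation*}
    \Big(\int_{0}^{\infty} \| \cT_n^{w,\pm} f \|_{L^p(\R^2)}^p \ud w \Big)^{1/p} = 2^{-n/p - n/2} \| U^{2^n}[\Phi_\pm; \tilde b_\pm] f \|_{L^p(\R^3)},
\end{equation*}
where $\cT_n^{w,\pm}$ is the Fourier multiplier operator with symbol $2^{-n/2} \tilde b_\pm e^{i 2^n \phi_\pm}$. Thus it suffices to prove $\| U^{2^n}[\Phi_\pm; \tilde b_\pm] f \|_{L^p(\R^3)} \lesssim 2^{n(1/2 - \varepsilon(p))} \tau^{-M} \|f\|_{L^p(\R^2)}$. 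I would treat two cases separately. When $\tau < C_\star^{-1}$, Definition~\ref{dfn: amplitude B} ii) forces $|\xi| > C_\star^{-1}$ on $\supp \tilde b_\pm$, so Proposition~\ref{prop: ls Nikodym} applies directly with $\sigma = C_\star^{-1}$ (the $\tau^{-M_0}$ loss from renormalising $\tilde b_\pm$ to belong to $\fN_\pm(C_\star^{-1}, \tau)$ being absorbed into the final $\tau^{-M}$). When $\tau \geq C_\star^{-1}$ (so $\tau \sim 1$ and $\tau^{-M} \sim 1$), I would further decompose $\tilde b_\pm = \tilde b_\pm^{\mathrm{lo}} + \sum_{\sigma_0 \leq \sigma \lesssim 1} \tilde b_\pm^\sigma$ via a smooth cutoff in $|\xi|$, with $\tilde b_\pm^{\mathrm{lo}}$ supported in $|\xi| \leq \sigma_0 := 2^{-\delta' n}$ and each $\tilde b_\pm^\sigma$ supported in $|\xi| \sim \sigma$ (with $\sigma$ dyadic). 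The low piece lies in $\fE_\pm(\sigma_0)$, so Proposition~\ref{prop: ls decoupling} provides a bound $2^{n(1/2 + \delta)} \sigma_0^{1/2 - 1/p}$; each high dyadic piece lies in $\fN_\pm(\sigma, 1)$, so Proposition~\ref{prop: ls Nikodym} yields $2^{n(1/2 - \varepsilon_{\mathrm{n}}(p))} \sigma^{-M}$. Summing over the $O(n)$ dyadic scales and choosing $\delta' > 0$ small enough that $M \delta' < \varepsilon_{\mathrm{n}}(p)/2$, together with $\delta > 0$ satisfying $\delta < \delta'(1/2 - 1/p)/2$, both contributions are bounded by $2^{n(1/2 - \varepsilon(p))} \|f\|_p$ with $\varepsilon(p) := \delta'(1/2 - 1/p)/2 > 0$.

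The main obstacle is the amplitude bookkeeping. The stationary phase amplitude $\tilde b_\pm = b \cdot A_\pm \cdot \chi_\pm$ involves derivatives of $A_\pm \sim \Delta^{-1/4}$ and of $t_\pm = (\eta \pm \sqrt{\Delta})/(3w)$, which produce negative powers of $\Delta$, $w$, and (for the dyadic pieces) of $\sigma$. Verifying that these powers match the asymmetric bounds $\tau^{-\alpha - \beta_2}(\tau \wedge \sigma)^{-\beta_1}$ in \eqref{eq: amplitude deriv n}, modulo an allowable $\tau^{-M_0}$ loss absorbed into the final $\tau^{-M}$, is routine but unpleasant; the compatibility is not accidental, as the classes $\fN_\pm(\sigma,\tau)$ and $\fE_\pm(\sigma)$ were designed precisely to accommodate stationary phase amplitudes of this type on the localisation $\Delta \geq \tau$, $|\xi| \gtrsim \sigma$.
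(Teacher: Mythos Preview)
Your proposal is correct and follows essentially the same route as the paper: non-stationary phase on $\Delta\le 0$, stationary phase at $t_\pm$ to pass to $U^{2^n}[\Phi_\pm;\cdot]$, then a split in $|\xi|$ with Proposition~\ref{prop: ls Nikodym} on the large-$|\xi|$ piece and Proposition~\ref{prop: ls decoupling} on the small-$|\xi|$ piece. The only difference is cosmetic: since the class $\fN_\pm(\sigma,\tau)$ only demands the \emph{lower} bound $|\xi|\ge\sigma$, the paper uses a single two-piece split $\fa_\pm=\fn_\pm+\fe_\pm$ at the threshold $|\xi|\sim\lambda^{-\delta_{\mathrm n}(p)}$ (with $\delta_{\mathrm n}(p)=\varepsilon_{\mathrm n}(p)/2M$) rather than your full dyadic decomposition in $\sigma$, which avoids the sum over $O(n)$ scales.
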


\begin{proof} We break the proof into steps. \medskip

\noindent \textit{Step 1: Initial decomposition.} Write $m_n^w[b] = \sum_\pm m_{n, \pm}^w[b] + m_{n, \error}^w[b]$ where
\begin{equation*}
    m_{n, \pm}^w[b](\xi, \eta) := b_{\main}(w; \xi, \eta) \int_{\R} e^{i2^n\phi_{\xi, \eta}^w(t)} a_{\pm}(w; \xi, \eta; t) \,\ud t
\end{equation*}
and, for $C_\star \geq 1$ as above,
\begin{align*}
    b_{\main}(w; \xi, \eta) &:= b(w; \xi, \eta)\chi_{(0,\infty)} \big( \Delta(w;\xi, \eta)\big), \\
    a_{\pm}(w; \xi, \eta; t) &:= a(t) \beta_0\big(C_\star^2\tau^{-1/2}(t - t_{\pm}(w; \xi, \eta)) \big). 
\end{align*}
The hypotheses on the support of $b$ ensures the function $b_{\main}$ is smooth. Furthermore, for $(w;\xi, \eta) \in \supp b_{\main}$, the critical points $t_{\pm}(w; \xi, \eta)$ are real valued and so the $a_{\pm}$ are well-defined. Recall that the $t_{\pm}(w; \xi,\eta)$ satisfy 
\begin{equation*}
  \frac{t_+(w; \xi,\eta) - t_-(w; \xi,\eta)}{2} = \frac{\sqrt{\Delta(w; \xi, \eta)}}{3w} \geq (3C_\star)^{-1} \tau^{1/2}. 
\end{equation*}
It therefore follows that for each fixed $(w; \xi, \eta) \in \supp b_{\main}$, the $t$-supports of the $a_{\pm}$ are disjoint. \medskip

\noindent \textit{Step 2: Non-stationary term.} Here we argue as in the proof of Lemma \ref{lem:vdc multiplier 2} ii). 

Let $b_{\error} := b - b_{\main}$. For $(w; \xi, \eta) \in \supp b_{\error}$ we have $\Delta(w; \xi, \eta)<0$ and
\begin{equation*}
    (\phi_{\xi, \eta}^w)'(t) \geq (\phi_{\xi, \eta}^w)'\Big(\frac{\eta}{3w}\Big) = -\frac{\Delta(w; \xi, \eta)}{3w} \geq (3C_\star)^{-1}\tau > 0 \quad \text{for all $t \in \supp a$,}
\end{equation*}
where in the first inequality we have used that $t \mapsto (\phi_{\xi,\eta}^w)'(t)$ has a minimum at $t=\eta/3w$. 
Thus, by non-stationary phase (Lemma~\ref{non-stationary lem}), we see that 
\begin{equation}\label{eq: error 1}
    |\partial_{\xi}^{\beta_1} \partial_{\eta}^{\beta_2} m_n^w[b_{\error}](\xi, \eta)| \lesssim_{\beta_1, \beta_2, N} 2^{n(\beta_1 + \beta_2)} 2^{-Nn} \tau^{-N}, \quad \textrm{$\beta_1$, $\beta_2$, $N \in \N_0$.}
\end{equation}

On the other hand, let $a_{\error} := a - a_+ - a_-$. It follows from \eqref{eq:phi root 1} that, if $(w; \xi, \eta) \in \supp b_{\main}$, then
$|(\phi_{\xi, \eta}^w)'(t)| \geq 3C_\star^{-4} \tau^{2}$  for all $t \in \supp a_{\error}(w; \xi, \eta; \,\cdot\,)$.
Since 
\begin{equation*}
    (m_n^w-m_{n,\pm}^w)[b_{\main}](\xi, \eta)=b_{\main}(w; \xi, \eta) \int_{\R} e^{i 2^n \phi_{\xi,\eta}^w (t)} a_{\error}(w; \xi, \eta; t) \ud t,
\end{equation*}
another application of non-stationary phase yields 
\begin{equation}\label{eq: error 2}
    |\partial_{\xi}^{\beta_1} \partial_{\eta}^{\beta_2} (m_n^w-m_{n,\pm}^w)[b_{\main}](\xi, \eta)| \lesssim_{\beta_1, \beta_2, N} 2^{n(\beta_1 + \beta_2)} 2^{-Nn} \tau^{-2N}
\end{equation}
for all $\beta_1$, $\beta_2$, $N \in \N_0$.

Combining \eqref{eq: error 1} and \eqref{eq: error 2} we have, by repeated integration-by-parts,
\begin{equation*}
    |\mathcal{F}^{-1}[m_{n,\error}^w [b]](x,y)|\lesssim_N \frac{2^{-Nn} \tau^{-2N}}{(1+ 2^{-n}|x| + 2^{-n}|y|)^{10}}
\end{equation*}
for all $N \in \N_0$ and $w \in \R$. Thus, by Young's convolution inequality, \eqref{eq: general ls} holds for the piece of the operator associated to $m_{n,\error}^w[b]$.\medskip 

\noindent \textit{Step 3: Passing to variable propagators.} In light of the above observations, it suffices to prove that \eqref{eq: general ls} holds with the operator $\cH_n^w[b]$ replaced with either of the operators $\cH_{n, \pm}^w[b]$ associated to the Fourier multipliers $m_{n, \pm}^w[b]$. By \eqref{eq:phi root 2}, we have 
\begin{equation*}
    (\phi_{\xi,\eta}^w)''(t_{\pm}(w; \xi,\eta)) = \pm 6w\Big(\frac{t_+(w; \xi,\eta) - t_-(w; \xi,\eta)}{2} \Big) = \pm2\sqrt{\Delta(w; \xi, \eta)}
\end{equation*}
and so the critical points of the phase are non-degenerate, in the sense that 
\begin{equation*}
    |(\phi_{\xi,\eta}^w)''(t_{\pm}(w; \xi,\eta))| \geq 2 \tau^{1/2} > 0.
\end{equation*} 
Provided $C_\star \geq 1$ is chosen sufficiently large, we may therefore apply standard asymptotic expansions for oscillatory integrals (see, for instance, \cite[Chapter 1]{Sogge2017}) to deduce that
\begin{equation}\label{eq: stationary phase}
    m_{n, \pm}^w[b](\xi, \eta) = \tau^{-1/4} 2^{-n/2} e^{i 2^n \phi_{\pm}(w; \xi, \eta)} \fa_{\pm}(w; \xi, \eta)
\end{equation}
where each $\fa_{\pm}$ is a smooth function satisfying $\supp \fa_{\pm} \subseteq b_{\main}$ and 
\begin{equation}\label{eq: der bounds stationary phase}
   |\partial^\alpha_w \partial^{\beta_1}_\xi \partial^{\beta_2}_\eta \fa_{\pm}(w; \xi, \eta)| \lesssim_{\alpha, \beta_1, \beta_2} \tau^{-\alpha - \beta_1 - \beta_2} \qquad \textrm{for all $\alpha$, $\beta_1$, $\beta_2 \in \N_0$.}
\end{equation}

In view of \eqref{eq: stationary phase}, for $\Phi_{\pm}$ as in \eqref{eq: our phase}, we have $\cH_{n,\pm}^{2^{-n} w}=\tau^{-1/4} 2^{-n/2}U^{2^n}[\Phi_\pm; \fa_\pm]$. In particular, applying the rescaling $w \mapsto 2^{-n} w$, in order to prove \eqref{eq: general ls} it suffices to show that for all $2 < p < 4$ there exists some $\varepsilon(p)>0$ such that 
\begin{equation}\label{240412e6_16}
 \big\|
    U^{\lambda}[\Phi_{\pm}; \fa_{\pm}]f
    \big\|_{L^p(\R^3)}
     \lesssim \lambda^{
    1/2-\varepsilon(p)} \tau^{-M}
    \|f\|_{L^p(\R^2)}
\end{equation}
holds uniformly over all $\lambda \geq 1$, for some universal exponent $M \in \N$. \medskip

\noindent \textit{Step 4: Applying $L^p$ local smoothing.}
We shall prove \eqref{240412e6_16} by combining Proposition~\ref{prop: ls Nikodym} and Proposition~\ref{prop: ls decoupling}. First of all, we verify that the amplitudes $\fa_{\pm}$ lie in the class $\fA_{\pm}(\tau)$ from Definition~\ref{dfn: amplitude A}. The conditions $|\xi|$, $|\eta| \leq C_\star$, $\tau \leq w \leq C_\star$ and $|\Delta(w; \xi, \eta)| \geq \tau$ for $(w; \xi, \eta) \in \supp \fa_{\pm}$ are inherited from the support of $b$. It remains to show $1/4 \leq t_{\pm}(w; \xi, \eta) \leq 4$ for $(w; \xi, \eta) \in \supp \fa_{\pm}$. Indeed, if the above condition on $t_{\pm}(w; \xi, \eta)$ fails and $C_\star \geq 4$, then it follows that \begin{equation*}
    |t - t_{\pm}(w; \xi, \eta)| > 2C_\star^{-2} \geq 2C_\star^{-2}\tau^{1/2} \qquad \textrm{for all $t \in [1/2, 2]$.} 
\end{equation*}
Thus, we see from the definitions that $a_{\pm}(w; \xi, \eta; \,\cdot\,) \equiv 0$, which forces $\fa_\pm(w; \xi, \eta) = 0$. These conclusions hold in a neighbourhood of $(w; \xi, \eta)$ and so $(w; \xi, \eta) \notin \supp \fa_{\pm}$.

Now fix $2 < p < 4$ and let $M \in \N$ and $\varepsilon_{\mathrm{n}}(p) > 0$ be the quantities guaranteed by Proposition~\ref{prop: ls Nikodym}. Defining $\delta_{\mathrm{n}}(p) := \varepsilon_{\mathrm{n}}(p)/(2M) \leq 1/4$, we write
\begin{equation*}
    \fa_{\pm} =
    \fn_{\pm} + \fe_{\pm} \quad \textrm{where} \quad \fe_{\pm}(w; \xi, \eta) := 
    \fa_{\pm}(w; \xi, \eta) \beta_0\big(\lambda^{\delta_{\mathrm{n}}(p)}\xi\big).
\end{equation*}
We may assume $\lambda^{\delta_{\mathrm{n}}(p)} \geq 4C_{\star}$, since otherwise the desired bound is trivial. Thus, by property ii) from Definition~\ref{dfn: amplitude B}, if $\tau < C_{\star}^{-1}$, then $|\xi|>C_\star^{-1}$ for $\xi \in \mathrm{supp}_\xi \, \fa_\pm$, so $\fe_{\pm} \equiv 0$. Using this observation and \eqref{eq: der bounds stationary phase}, we see that
\begin{equation*}
   \fn_{\pm} \in \fN_{\pm}(\lambda^{-\delta_{\mathrm{n}}(p)}, \tau) \qquad \textrm{and} \qquad \fe_{\pm} \in \fE_{\pm}(\lambda^{-\delta_{\mathrm{n}}(p)}).
\end{equation*}
Thus, Proposition~\ref{prop: ls Nikodym} implies that
\begin{equation*}
\big\|
    U^{\lambda}[\Phi_{\pm}; \fn_{\pm}]f
    \big\|_{L^p(\R^3)}
     \lesssim_p \lambda^{
    1/2-\varepsilon_{\mathrm{n}}(p)/2} \tau^{-M}
    \|f\|_{L^p(\R^2)}.
\end{equation*}
On the other hand, Proposition~\ref{prop: ls decoupling} (with, say, $\delta := 1/4 - 1/(2p) > 0$) implies that
\begin{equation*}
\big\|
    U^{\lambda}[\Phi_{\pm}; \fe_{\pm}]f
    \big\|_{L^p(\R^3)}
     \lesssim_p \lambda^{
    1/2-\varepsilon(p)}
    \|f\|_{L^p(\R^2)}
\end{equation*}
where $\varepsilon(p) := (1/4 - 1/(2p))\delta_{\mathrm{n}}(p)$ satisfies $0 < \varepsilon(p) < \varepsilon_{\mathrm{n}}(p)/2$. Combining these two estimates, we deduce the desired bound \eqref{240412e6_16}.
\end{proof}




\subsection{Proof of Proposition~\ref{prop:Lp smoothing low} and Proposition~\ref{prop:comparable Lp smoothing}}

Both Proposition~\ref{prop:Lp smoothing low} and Proposition~\ref{prop:comparable Lp smoothing} follow easily from Proposition~\ref{prop: general ls}. It is convenient to begin with the proof of the latter. Throughout what follows, we let $\rho \in C^{\infty}_c(\R)$ be a fixed bump function satisfying $\rho(w) = 1$ for all $1 \leq w \leq 8$ and $\supp \rho \subseteq [1/2, 16]$.

\begin{proof}[Proof of Proposition~\ref{prop:comparable Lp smoothing}] Fix $n \in \N$, $(k_1, k_2) \in \cB_{\diag}$ satisfying $k_1 \leq \kappa n$. Note that
\begin{equation*}
    \cH_{n, k_1, k_2}^{w, > \kappa}f(x,y) = \cH_{n, k_1, k_2}^{w, > \kappa}f(x,y) \rho(w) = \cH_n^w[b_0]f(x,y)
\end{equation*}
for $1 \leq w \leq 8$ and $(x,y) \in \R^2$, where
\begin{equation*}
      b_0(w;\xi,\eta) := \rho(w)\beta(2^{-k_1}\xi)\beta(2^{-k_2} \eta)\big(1 - \beta_0\big(2^{-2k_2 + 2\floor{n\kappa}}\Delta(w;\xi, \eta)\big)\big).
\end{equation*}
We apply the rescaling $(\xi, \eta) \mapsto (2^{k_2}\xi, 2^{k_2}\eta)$ and $w \mapsto 2^{k_2} w$ to arrive at $\cH_{n+k_2}^w[b]f$ where
\begin{equation*}
    b(w;\xi,\eta) := \rho(2^{k_2}w)\beta(2^{k_2-k_1}\xi)\beta(\eta)\big(1 - \beta_0\big(2^{2\floor{n\kappa}}\Delta(w;\xi, \eta)\big)\big). 
\end{equation*}
Thus, given $2 < p < 4$, matters are reduced to showing 
\begin{equation}\label{eq:comparable Lp smoothing 1}
    \Big(\int_0^{\infty} \big\| \cH_n^w[b]f \big\|_{L^p(\R^2)}^p\,\ud w \Big)^{1/p} \lesssim_p 2^{2M\kappa  n} 2^{-n/p - \varepsilon(p)n} \|f\|_{L^p(\R^2)},
\end{equation}
where $\varepsilon(p) > 0$ and $M \in \N$ are as in the statement of Proposition~\ref{prop: general ls}. However, using the condition $(k_1, k_2) \in \cB_{\diag}$, if $C_{\star} \geq 1$ is chosen sufficiently large and we define $\tau := 2^{-C_2 - 16}2^{-2n \kappa}$, then 
\begin{equation*}
    C_{\star}^{-1} \leq |\xi| \leq C_{\star}, \quad |\eta| \leq C_{\star}, \quad \tau \leq w \leq C_{\star} \quad \textrm{and} \quad |\Delta(w; \xi, \eta)| \geq \tau
\end{equation*}
for all $(w; \xi, \eta) \in \supp b$ and it is clear that $b \in \fB(\tau)$. Thus, \eqref{eq:comparable Lp smoothing 1} is a direct consequence of Proposition~\ref{prop: general ls}. 
\end{proof}

Combining Proposition~\ref{prop:comparable Lp smoothing} with the $L^2$-based theory from \S\ref{sec:L2 theory}, we obtain Proposition~\ref{prop:Lp smoothing diag}. All that remains is to verify Proposition~\ref{prop:Lp smoothing low}.

\begin{proof}[Proof of Proposition~\ref{prop:Lp smoothing low}] Both i) and ii) easily follow by verifying that the relevant multipliers satisfy the hypotheses of Proposition~\ref{prop: general ls}.\medskip

\noindent \textit{i)} Suppose $-C_1 < k_1 \leq 8$ and recall that $C_1=2C_2+12$. We begin with a technical reduction, which relies on Proposition~\ref{prop:Lp smoothing diag}. Defining $\widetilde{C}_2 := C_2 + 8$, we may write
\begin{equation*}
\cH^w_{n, k_1, \leq -C_2} = \cH^w_{n, k_1, \leq -\widetilde{C}_2} + \sum_{k_2=-\widetilde{C}_2 + 1}^{-C_2} \cH^w_{n, k_1, k_2}.
\end{equation*}
For each $-\tilde{C}_2 < k_2 \leq -C_2$, we have $|k_1 - k_2| \leq C_2 + 16$ and so $(k_1, k_2) \in \cB_{\diag}$. Hence, we can apply Proposition~\ref{prop:Lp smoothing diag} to favourably bound each of the operators $\cH^w_{n, k_1, k_2}$ in the above sum.

It remains to estimate $\cH^w_{n, k_1, \leq -\widetilde{C}_2}$. As in the proof of Proposition~\ref{prop:comparable Lp smoothing}, matters are reduced to bounding $\cH_n^w[b]$ where
\begin{equation*}
     b(w;\xi,\eta) := \rho(w)\beta(2^{-k_1}\xi)\beta_0(2^{\tilde{C}_2} \eta).
\end{equation*}
On $\supp b$ we have $|\xi| \geq 2^{-k_1 - 1} \geq 2^{-C_1 - 2}$ and $|\eta| \leq 2^{-\widetilde{C}_2 + 1}$. Consequently,
\begin{equation*}
  |\Delta(w; \xi, \eta)| =  |\eta^2 + 3 \xi w| \geq 3 \cdot 2^{-C_1 - 3} - 2^{-2\tilde{C}_2 + 2} \geq 2^{-C_1 - 3}
\end{equation*}
provided $\tilde{C}_2$ satisfies $2\tilde{C}_2 \geq C_1 + 4$. However, since $\tilde{C}_2 = C_2 + 8$ and $C_1=2C_2  + 12$, this is indeed the case. Recalling Definition~\ref{dfn: amplitude B}, provided $C_{\star} \geq 1$ is chosen sufficiently large, it is therefore clear that $b \in \fB(\tau)$ for $\tau \sim 1$ and Proposition~\ref{prop: general ls} applies to give the desired result. \medskip

\noindent \textit{ii)} Suppose $-C_2 < k_2 \leq 8$ and consider the operator $\cH^w_{n, \leq 
 -C_1, k_2}$. Arguing as before, matters are reduced to bounding $\cH_n^w[b]$ where\footnote{Note that it is only here we consider $|\xi| \ll 1$ so that the decoupling argument is needed.}
\begin{equation*}
    b(w;\xi,\eta) := \rho(w)\beta_0(2^{C_1}\xi)\beta(2^{-k_2} \eta).
\end{equation*}
On the support of $b$ we have $|\xi| \leq 2^{-C_1 + 1}$ and $|\eta| \geq 2^{-k_2 - 1} \geq 2^{-C_2 - 2}$. Consequently,
\begin{equation*}
    |\Delta(w; \xi, \eta)| = |\eta^2 + 3 \xi w| \geq 2^{-2C_2 - 4} - 3 \cdot 2^{-C_1 + 5}  \geq 2^{-2C_2 - 5} > 0,
\end{equation*}
since $C_1 = 2 C_2 + 12$. Thus, provided $C_{\star} \geq 1$ is chosen sufficiently large, it is clear that $b \in \fB(\tau)$ for $\tau \sim 1$ and Proposition~\ref{prop: general ls} applies to give the desired result.
\end{proof}




\subsection{The decoupling argument}\label{subsec: decoupling arg} It remains to prove Proposition~\ref{prop: ls decoupling}. We begin with some preliminary definitions and lemmas. 

For $0 < \sigma \leq 1$, let $\cI(\sigma)$ be a finitely-overlapping cover of $[-2C_{\star}, 2C_{\star}]$  intervals of length $2 \sigma$ and $(\zeta_I)_{I \in \cI(\sigma)}$ be a smooth partition of unity subordinate to $\cI(\sigma)$, satisfying $|\partial_s^N\zeta_I(s)| \lesssim_N \sigma^{-N}$ for all $N \in \N_0$. Let $P_I$ be the Fourier projection operator acting on functions on $\R^2$ with multiplier $(\xi, \eta) \mapsto \zeta_I(\eta)$. Given any $\fe_{\pm} \in \fE_{\pm}(\sigma)$, we further write $\fe_{\pm, I}(w; \xi, \eta) := \fe_{\pm}(w; \xi, \eta) \zeta_I(\eta)$; note that $\fe_{\pm, I} \in \fE_\pm (\sigma)$.

\begin{lemma}[Coarse scale decoupling]\label{lem: dec coarse} For $2 \leq p \leq 4$ and $\delta > 0$, given $\lambda \geq 1$ and $\lambda^{-1/2} \leq \sigma \leq 1$, we have
\begin{equation}\label{lem: eq coarse}
    \big\| U^{\lambda}[\Phi_{\pm}; \fe_{\pm}]f\big\|_{L^{p}(B_{\lambda})}
    \lesssim_{\delta} 
    \sigma^{-(1/2 - 1/p) - \delta}    \Big( \sum_{I \in \cI(\sigma)}
    \big\| U^{\lambda}[\Phi_{\pm}; \fe_{\pm, I}]f\big\|_{L^{p}(w_{B_{\lambda}})}^p \Big)^{1/p},
\end{equation}
up to a rapidly decreasing $L^p$ error, whenever $B_{\lambda} \subseteq \R^3$ is a  $\lambda$-ball and $\fe_{\pm} \in \fE_{\pm}(\sigma)$. 
\end{lemma}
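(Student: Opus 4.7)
The plan is to reduce the estimate to a variable-coefficient $\ell^p$-decoupling statement at scale $\sigma$, proved via a two-step procedure that leverages the cone-like geometry of the phase function. First, using the partition of unity subordinate to $\cI(\sigma)$, I would write $\fe_\pm = \sum_I \fe_{\pm, I}$, and reduce matters (via standard pseudo-local considerations for $U^\lambda$, which contribute the rapidly decreasing error) to a localised $\ell^p$-decoupling inequality on the $\lambda$-ball $B_\lambda$.

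The key geometric observation is that $\det \partial^2_{\bxi\bxi}\phi_\pm \equiv 0$ (as recorded in the proof of Lemma~\ref{lem: Nik cond}), so that for each fixed $w$ the frequency surface $\{(\xi,\eta,\phi_\pm(w;\xi,\eta))\}$ is cone-like, degenerate in one direction. Meanwhile, the amplitude $\fe_\pm$ is supported in a $\sigma$-neighborhood of the line $\cL = \{\xi = 0\}$, and the restriction of $\phi_\pm$ to $\cL$ yields the non-degenerate curve $\eta \mapsto \phi_\pm(w; 0, \eta) \sim \eta^3/w^2$. These two facts suggest a two-step decoupling: first decouple along $\cL$ (in the $\eta$-direction), and then handle the transverse $\xi$-direction.

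In the first step I would apply a small cap decoupling inequality for the 1D non-degenerate curve arising from $\cL$, yielding $\ell^p$-decoupling into the $\eta$-intervals $I \in \cI(\sigma)$ with a loss of order $\sigma^{-(1/2-1/p)-\delta}$; the exponent $-(1/2-1/p)$ arises as the standard Hölder/Minkowski gap between $\ell^2$- and $\ell^p$-decoupling across the $\sim \sigma^{-1}$ caps indexed by $\cI(\sigma)$. The second step passes from the $\sigma$-neighbourhood of $\cL$ to the individual $\sigma \times \sigma$ frequency rectangles supporting the $\fe_{\pm, I}$ by invoking a variable coefficient extension of the $\ell^p$-decoupling theorem of Bourgain--Demeter \cite{BD2017}. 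The H\"ormander-type non-degeneracy and uniform derivative bounds on $\Phi_\pm$ needed to apply this variable coefficient theorem on $\supp \fe_\pm$ are supplied by Lemma~\ref{lem: Hormander cond}.

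\textbf{The main obstacle} is the inapplicability of $\ell^2$-decoupling in this setting, due to the cone-like/hyperbolic signature of the phase, which forces us into the weaker $\ell^p$-decoupling framework and is ultimately responsible for the $\sigma^{-(1/2-1/p)-\delta}$ loss. Threading together the 1D small cap decoupling with the 2D variable coefficient decoupling while preserving precisely this loss (and no worse), and verifying the hypotheses of the variable coefficient theorem uniformly across $\supp \fe_\pm$ and over the allowable range of $w$, is the main technical challenge.
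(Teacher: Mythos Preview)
Your intuition is right—small cap decoupling in the $\eta$-direction is the engine—but the curve you identify and the mechanism for reaching it are both off. You point to $\eta \mapsto \phi_\pm(w;0,\eta)$, but this curve does not arise from any Fourier-side analysis of $U^\lambda$; the correct curve is $\eta \mapsto (\partial_w\phi_\pm)(w_B/\lambda;0,\eta)$. It appears because the paper freezes $x$, covers $B_\lambda$ by $\sigma^{-1}$-balls in $(y,w)$, and observes that on each such ball the phase linearises as $y\eta + w\,(\partial_w\phi_\pm)(w_B/\lambda;\xi,\eta)$ up to an $O(1)$ error (using $\sigma \ge \lambda^{-1/2}$ and Lemma~\ref{lem: Hormander cond} ii)); the $|\xi|\le\sigma$ localisation built into $\fE_\pm(\sigma)$ then lets one replace $\xi$ by $0$ at cost $O(\sigma)$. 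This shows the $(y,w)$-Fourier transform of the localised operator concentrates in a $\sigma$-neighbourhood of that curve, whose nonvanishing curvature \eqref{eq: curv formula} allows application of the constant-coefficient 2D small cap inequality (Lemma~\ref{lem: small cap dec}, essentially planar $L^4$ restriction). This spatio-temporal localisation to scale $\sigma^{-1}$—not any black-box variable-coefficient theorem—is the actual content of the proof, and your proposal does not supply it.

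Your ``second step'' invoking variable coefficient Bourgain--Demeter does not belong to this lemma. After decoupling into the $\eta$-intervals $I\in\cI(\sigma)$, the amplitudes $\fe_{\pm,I}$ already sit on $\sigma\times\sigma$ rectangles, since $|\xi|\le\sigma$ is part of the definition of $\fE_\pm(\sigma)$; there is nothing further to do here. The variable coefficient $\ell^p$ decoupling of \cite{BD2017} enters only in the separate fine-scale Lemma~\ref{lem: dec fine}, which pushes each $\fe_{\pm,I}$ down from scale $\sigma$ to scale $\lambda^{-1/2}$; you appear to be conflating that lemma with this one.
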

Here and below, we say that an inequality $A(f; \lambda) \lesssim_L B(f; \lambda)$ holds \textit{up to a rapidly decreasing $L^p$ error} if $A(f; \lambda) \lesssim_{L, N} B(f; \lambda) + \lambda^{-N} \|f\|_{L^p(\R^2)}$ holds for all $N \in \N_0$. Furthermore, given a ball $B \subset \R^d$ for $d = 2$, $3$, we let $w_B \colon \R^d \to (0, 1]$ denote the weight
\begin{equation*}
   w_B(\bz) := \big(1 + r_B^{-1}|\bz - \bz_B| \big)^{-100} \qquad \textrm{for all $\bz \in \R^d$,} 
\end{equation*}
where $\bz_B$ and $r_B$ denote the centre and radius of $B$, respectively.

Lemma~\ref{lem: dec coarse} is a consequence of a `small cap' decoupling result, which is essentially a restatement of the sharp Fourier restriction theorem in the plane. 

\begin{definition}\label{def: curve class} Let $\fG$ denote the class of $C^2$ functions $\gamma \colon J \to \R$, for $J \subseteq \R$ an open interval of length at most $2C_{\star}$, satisfying 
\begin{equation*}
    |\gamma^{(j)}(s)| \leq C_{\star}^5 \quad \textrm{for $j = 1$, $2$} \quad \textrm{and} \quad |\gamma^{(2)}(s)| \geq C_{\star}^{-5} \qquad \textrm{for all $s \in J$.}
\end{equation*}
Given $\gamma \in \fG$ and $\rho > 0$, let $\cZ(\gamma; \rho)$ denote the space of $F \in \cS(\R^2)$ satisfying
\begin{equation*}
     \supp \widehat{F} \subseteq \{(\xi, \eta) \in J \times \widehat{\R} : |\eta - \gamma(\xi)| < \rho \}.
\end{equation*}
\end{definition}

With this definition, the small cap decoupling lemma reads thus.

\begin{lemma}[Small cap decoupling in the plane (\textit{c.f.} {\cite[Lemma 4.2]{MR4293776}})]\label{lem: small cap dec} Let $\gamma \in \fG$. For all $2 \leq p \leq 4$, $\delta > 0$ and $R \geq 1$, we have
\begin{equation*}
    \|F\|_{L^p(B_R)} \lesssim_{p, \delta} R^{1/2 - 1/p + \delta} \Big(\sum_{I \in \cI(R^{-1})} \|P_I F\|_{L^p(w_{B_R})}^p \Big)^{1/p} \qquad \textrm{}
\end{equation*}
whenever $F \in \cZ(\gamma; R^{-1})$ and $B_R \subseteq \R^2$ is an $R$-ball. 
\end{lemma}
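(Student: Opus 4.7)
The plan is to establish the endpoint $p=4$ via a Córdoba-type $L^4$ orthogonality argument and then interpolate with the trivial $p=2$ case. Standard locally constant arguments (relying on the fact that each $P_I F$ has Fourier support in a set of diameter $O(R^{-1})$ and is therefore essentially constant on $R$-balls) will handle the localisation to $B_R$ and the weighted norms $L^p(w_{B_R})$ on the right-hand side. The $p=2$ case is immediate: Plancherel and the finite overlap of the supports $\supp\widehat{P_I F}$ yield $\|F\|_{L^2}^2\sim\sum_I\|P_I F\|_{L^2}^2$, which matches the claimed exponent $R^{1/2-1/p}=R^0$ at $p=2$.

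For the $p=4$ endpoint I would expand $\|F\|_{L^4}^4=\|F^2\|_{L^2}^2$ with $F^2=\sum_{I,J}(P_I F)(P_J F)$. The Fourier support of $(P_I F)(P_J F)$ is concentrated near the sum point $(\xi_I+\xi_J,\gamma(\xi_I)+\gamma(\xi_J))$, where $\xi_I$ denotes the $\xi$-coordinate along $\Gamma:=\{(\xi,\gamma(\xi)):\xi\in J\}$ corresponding to the $\eta$-cap $I$. The crucial transversality assertion is that the map $\{I,J\}\mapsto(\xi_I+\xi_J,\gamma(\xi_I)+\gamma(\xi_J))$ has $O(1)$ multiplicity at scale $R^{-1}$ on unordered pairs. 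This reflects the non-vanishing curvature $|\gamma''|\ge C_\star^{-5}$: the Jacobian of the map $(x,y)\mapsto(x+y,\gamma(x)+\gamma(y))$ equals $\gamma'(y)-\gamma'(x)$ and by the mean value theorem has magnitude $\gtrsim|y-x|$, ensuring local invertibility away from the diagonal. Granted this, Plancherel and Cauchy-Schwarz in the pointwise product yield
\begin{equation*}
    \|F^2\|_{L^2}^2\lesssim\sum_{\{I,J\}}\|(P_I F)(P_J F)\|_{L^2}^2\le\Big(\sum_I\|P_I F\|_{L^4}^2\Big)^2,
\end{equation*}
so $\|F\|_{L^4}\lesssim(\sum_I\|P_I F\|_{L^4}^2)^{1/2}$. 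A final Cauchy-Schwarz over the $O(R)$ caps converts the $\ell^2$ sum to $R^{1/4}(\sum_I\|P_I F\|_{L^4}^4)^{1/4}$, matching the claim at $p=4$. Complex interpolation with the $p=2$ estimate then covers $2\le p\le 4$.

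The main obstacle will be handling the failure of the clean bounded-multiplicity property near the diagonal $I\approx J$, and near points where $\gamma'$ vanishes inside $J$. Near the diagonal the Jacobian $\gamma'(y)-\gamma'(x)$ degenerates, and one must perform the Córdoba analysis more carefully: either restrict attention to $|I-J|\gtrsim R^{-1/2}$ and invoke $\ell^2$ Bourgain--Demeter decoupling for the curve at scale $R^{-1/2}$ for the complementary near-diagonal piece (which costs only a harmless $R^\delta$ loss, precisely the buffer provided in the statement), or reorganise the sum in terms of canonical parabolic caps of size $R^{-1/2}$ and then apply small-cap Córdoba within each. The vanishing of $\gamma'$ is remedied by splitting $J$ at its unique critical point into monotonic sub-intervals, on each of which $\gamma$ admits a well-defined inverse.
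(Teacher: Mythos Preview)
Your approach is correct and closely parallels the paper's, which also reduces to the $p=4$ endpoint and interpolates with $p=2$. The paper, however, invokes the classical $L^4$ Fourier restriction estimate for curves (Fefferman, Zygmund) as a black box to obtain $\|F\|_{L^4}\lesssim_\delta R^{-3/4+\delta}\|\widehat F\|_{L^4}$, then uses disjointness in $\eta$ to split $\|\widehat F\|_{L^4}^4\lesssim\sum_I\|\widehat{P_IF}\|_{L^4}^4$, applies Hausdorff--Young to pass to $\|P_IF\|_{L^{4/3}}$, and finally H\"older on an $R$-ball to reach $\|P_IF\|_{L^4}$. Your C\'ordoba bi-orthogonality is essentially the \emph{proof} of that restriction estimate, so the content is the same; you just arrive at the $\ell^2$-$L^4$ decoupling first and Cauchy--Schwarz down to $\ell^4$, whereas the paper goes restriction $\to$ Hausdorff--Young $\to$ H\"older.

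Two remarks on your obstacles. First, the diagonal concern is a non-issue: the standard C\'ordoba argument already gives bounded overlap of sumsets for \emph{all} unordered pairs $\{I,J\}$, including $I=J$, since strict convexity forces $(\xi_I+\xi_J,\gamma(\xi_I)+\gamma(\xi_J))$ to determine $\{\xi_I,\xi_J\}$ up to $O(1)$ ambiguity; no auxiliary $R^{-1/2}$ scale or Bourgain--Demeter input is needed. Second, your concern about $\gamma'$ vanishing is genuine for your route, because $P_I$ localises in $\eta$, not in $\xi$, so identifying an $\eta$-cap with a single $\xi$-arc requires $\gamma$ to be monotone. Your fix (split at the unique critical point) works. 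Note that the paper's route sidesteps this entirely: the restriction estimate needs only $|\gamma''|\gtrsim 1$, and the $\eta$-supports of the $\widehat{P_IF}$ are trivially disjoint regardless of the behaviour of $\gamma'$.
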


By standard interpolation arguments, the lemma follows from the case $p=4$ which is a consequence of the classical $L^4$ Fourier restriction estimate, dating back to \cite{Fefferman1970, Zygmund1974}. The latter implies that, for all $\delta > 0$, we have
\begin{align}
    \nonumber
    \|F\|_{L^4(\R^2)} &\lesssim_{p, \delta} R^{-3/4 + \delta}   \|\widehat{F}\|_{L^4(\R^2)}\\
    \label{eq: 2d restriction}
    &\lesssim R^{-3/4 + \delta} \Big( \sum_{I \in \cI(R^{-1})} \|(P_I F)\;\widehat{}\;\|_{L^4(\R^2)}^4\Big)^{1/4}
\end{align}
whenever $F \in \cS(\R^2)$ satisfies the above Fourier support condition. Indeed, the first inequality in \eqref{eq: 2d restriction} follows from the uncertainty principle reformulation of the restriction estimate as in \cite{Tao2004}, whilst the second inequality is a trivial consequence of the essentially disjoint frequency support. To pass from \eqref{eq: 2d restriction} to the $p = 4$ case of Lemma~\ref{lem: small cap dec}, one applies the Hausdorff--Young inequality to the right-hand norms in \eqref{eq: 2d restriction} to pass to $L^{4/3}$ norms of the $P_I F$. The resulting inequality implies a local version of itself where the integrals are essentially localised to a spatial $R$-ball. One may then apply H\"older's inequality to pass from the localised $L^{4/3}$ norms to $L^4$ norms, at the cost of a factor of $R$.

\begin{proof}[Proof (of Lemma~\ref{lem: dec coarse})] Fix $\fe_{\pm} \in \fE_{\pm}(\sigma)$ and $0 < \delta < 1/2$ and, for notational convenience, in this proof we let $U^{\lambda}_{\pm} := U^{\lambda}[\Phi_{\pm}; \fe_{\pm}]$ and $U^{\lambda}_{\pm, I} := U^{\lambda}[\Phi_{\pm}; \fe_{\pm, I}]$. Note that, by H\"older's inequality, \eqref{lem: eq coarse} holds with the factor $\sigma^{-\delta}$ placed with $\sigma^{-1/2}$. Consequently, we assume without loss of generality that
\begin{equation}\label{eq: coarse dec 0}
   \sigma \leq \min\big\{6^{-1}C_{\star}^{-2}, \lambda^{-\delta}\big\} < 1/100.
\end{equation}

Fix $B_{\lambda} \subseteq \R^2$ a $\lambda$-ball and let $\cB(\sigma^{-1})$ be a finitely-overlapping covering of $B_{\lambda}$ by balls of radius $\sigma^{-1}$. Fixing $2 \leq p \leq 4$ and
$\delta > 0$, we aim to show
\begin{equation}\label{eq: coarse dec 1}
    \big\| U^{\lambda}_{\pm}f(x, \,\cdot\,,\,\cdot\,)\big\|_{L^{p}(B)}
     \lesssim_{\delta, N} 
    \sigma^{-(1/2 - 1/p) - \delta} 
    \Big( \sum_{I \in \cI(\sigma)}
    \big\| U^{\lambda}_{\pm, I}f(x, \,\cdot\,,\,\cdot\,)\big\|_{L^{p}(w_B)}^p \Big)^{1/p}
\end{equation}
holds up to inclusion of a rapidly decreasing $L^p$ error for each $x \in \R$ and every ball $B \in \cB(\sigma^{-1})$. Once \eqref{eq: coarse dec 1} is established, the desired result immediately follows by first taking an $\ell^p$ norm of both sides of the inequality over all $B \in \cB(\sigma^{-1})$ and then taking an $L^p$-norm in $x$.

Let $\rho \in \cS(\R^2)$ satisfy $\supp \hat{\rho} \subseteq B(0,1)$ and $|\rho(\bz)| \gtrsim 1$ for all $\bz \in B(0,1)$. Given $B \in \cB(\sigma^{-1})$ with centre $\bz_B = (y_B, w_B)$, define $\rho_B(\bz) := \rho(\sigma(\bz - \bz_B))$. Furthermore, for each $x \in \R$ let
\begin{equation*}
    G_{\pm, x}^{\lambda, B}(\bz) := U^{\lambda}_{\pm}f(x, \bz_B + \bz)\rho_B(\bz_B + \bz), 
    \end{equation*}
for $\bz = (y,w) \in \R^2$. It then follows that
\begin{equation}\label{eq: coarse dec 2}
    \big\| U^{\lambda}_{\pm}f(x, \,\cdot\,,\,\cdot\,)\big\|_{L^{p}(B)} \lesssim \|G_{\pm, x}^{\lambda, B}\|_{L^p(\R^2)}.
\end{equation}

Our goal is to show that the Fourier transform $(G_{\pm, x}^{\lambda, B})\;\widehat{}\;$ is essentially supported in the $\sigma$-neighbourhood of some curve, allowing us to apply Lemma~\ref{lem: small cap dec} above. This Fourier support property is a consequence of the spatio-temporal localisation to the $\sigma^{-1}$-ball: at this scale the variable coefficient phase $\Phi_{\pm}^{\lambda}$ can be approximated by a constant coefficient phase, and the operator $U^{\lambda}_{\pm}f$ is effectively a Fourier transform.

For $\bz = (y,w) \in \R^2$, we may write 
\begin{equation*}
    G_{\pm, x}^{\lambda, B}(\bz) = \int_{\R^2} K_{\pm, x}^{\lambda, B}(\bz; u,v)f(u,v)\,\ud u \ud v
\end{equation*}
where 
\begin{equation}\label{eq: kernel coarse}
    K_{\pm,x}^{\lambda, B}(\bz; u,v)= \rho_{B} (\bz_B + \bz) \int_{\widehat{\R}^2} e^{-i(u \xi + v \eta) + i \Phi^\lambda_\pm (x, \bz_B + \bz; \bxi)} \fe_\pm^\lambda (w_B + w; \bxi)  \ud \bxi
\end{equation}
for $\bxi = (\xi, \eta)$. Observe that
\begin{equation}\label{eq: coarse dec 3}
    \Phi_{\pm}^\lambda (x, \bz_B + \bz; \xi, \eta) = \Phi_{\pm}^\lambda(x, y_B, w_B; \xi, \eta) + y\eta + w h_{\pm}^{\lambda, B}(\xi, \eta) + \cE_{\pm}^{\lambda, B}(w; \xi, \eta)
\end{equation}
where $h_{\pm}^{\lambda, B}(\xi, \eta) := \partial_w \phi_{\pm}(w_B/\lambda; \xi, \eta)$ and 
\begin{equation*}
    \cE_{\pm}^{\lambda, B}(w; \xi, \eta):=\frac{w^2}{\lambda} \int_0^1 \partial_w^2 \phi_\pm \big((w_B+sw)/\lambda; \xi, \eta\big) \, (1-s) \ud s;
\end{equation*}
in particular $\cE_{\pm}^{\lambda, B}(w; \xi, \eta)$ satisfies
\begin{equation}\label{eq: coarse dec 4}
    |\partial_w^{\alpha} \partial_{\xi}^{\beta_1} \partial_{\eta}^{\beta_2} \cE_{\pm}^{\lambda, B}(w; \xi, \eta)| \lesssim_{\alpha, \beta_1, \beta_2} \sigma^{\alpha} \qquad \textrm{for all $\alpha$, $\beta_1$, $\beta_2 \in \N_0$.}
\end{equation}
Note that \eqref{eq: coarse dec 4} relies on the hypothesis $\sigma^2 \geq \lambda^{-1}$, the support hypothesis $|w| \leq \sigma^{-1}$ and Lemma~\ref{lem: Hormander cond} ii).

Taking the Fourier transform of the kernel representation of $G_{\pm, x}^{\lambda, B}$ in \eqref{eq: kernel coarse}, applying \eqref{eq: coarse dec 3} and Fubini's theorem, we obtain
\begin{equation}\label{eq: coarse dec 5}
    (G_{\pm, x}^{\lambda, B})\;\widehat{}\;(\tilde{\eta}, \tilde{\omega}) = \int_{\R^2} \int_{\widehat{\R}^2} e^{i \Phi_{\pm}^{\lambda}(x-u, y_B - v, w_B; \xi, \eta)} A_{\pm}^{\lambda, B}(\tilde{\eta}, \tilde{\omega}; \xi, \eta)\ud\xi\ud\eta \,f(u,v) \ud u \ud v
\end{equation}
where
\begin{equation*}
A_{\pm}^{\lambda, B}(\tilde{\eta}, \tilde{\omega}; \xi, \eta) := \int_{\R^2} e^{-i(y(\tilde{\eta} - \eta) + w(\tilde{\omega} - h_{\pm}^{\lambda, B}(\xi, \eta)))} \fe_{\pm}^{\lambda, B}(y, w; \xi, \eta)\ud y \ud w    
\end{equation*}
for
\begin{equation*}
    \fe_{\pm}^{\lambda, B}(y, w; \xi, \eta) := e^{i \cE_{\pm}^{\lambda, B}(w; \xi, \eta)} \fe_{\pm}^{\lambda}(w_B + w; \xi, \eta) \rho_B(y_B + y, w_B +  w)
\end{equation*}
and where we have used that
\begin{equation*}
    -u \xi - v \eta + \Phi^\lambda_\pm (x,y_B, w_B; \xi, \eta) = \Phi_\pm^\lambda (x-u, y_B-v, w_B; \xi, \eta).
\end{equation*}
In view of \eqref{eq: coarse dec 4}, \eqref{eq: amplitude deriv e} and \eqref{eq: coarse dec 0}, and setting $\bxi=(\xi, \eta)$, we have 
\begin{equation*}
|\partial_{\bz}^{\balpha} \partial_{\bxi}^{\bbeta} \fe_{\pm}^{\lambda, B}(\bz; \bxi)| \lesssim_{\balpha, \bbeta} \sigma^{-\beta_1-\beta_2} \sigma^{\alpha_1 + \alpha_2}
\end{equation*}
for all $\balpha = (\alpha_1, \alpha_2)$, $\bbeta = (\beta_1, \beta_2) \in \N_0^2$. Thus, repeated integration-by-parts yields
\begin{align}\label{eq: coarse dec 6}
    |\partial_{\bxi}^{\bbeta} \partial_{(\tilde{\eta}, \tilde{\omega})}^{\bgamma} &A_{\pm}^{\lambda, B}(\tilde{\eta}, \tilde{\omega}; \bxi)| \\
    \nonumber
    &\lesssim_{\bbeta, \bgamma, N} \sigma^{-\beta_1 - \beta_2 - \gamma_1 - \gamma_2 -2}\big(1 + \sigma^{-1}|\tilde{\eta} - \eta| + \sigma^{-1}|\tilde{\omega} - h_{\pm}^{\lambda, B}(\bxi)|\big)^{-N}
\end{align}
for all $\bbeta = (\beta_1, \beta_2)$, $\bgamma = (\gamma_1, \gamma_2) \in \N_0^2$ and $N \in \N_0$; here we have used that $\rho_B$ decays rapidly outside of a ball of radius $\sigma^{-1}$.

By \eqref{eq: amplitude loc e} and Lemma~\ref{lem: Hormander cond} ii), there exists an absolute constant $C \geq 1$ such that
\begin{equation}\label{eq: coarse dec 7}
    |\tilde{\omega} - h_{\pm}^{\lambda, B}(\xi, \eta)| \geq |\tilde{\omega} - h_{\pm}^{\lambda, B}(0, \tilde{\eta})| - C(|\tilde{\eta} - \eta| + \sigma)
\end{equation}
whenever $(\xi, \eta) \in \mathrm{supp}_{\xi, \eta}\, A_{\pm}^{\lambda, B}$. On the other hand, given $(\xi, \eta) \in \widehat{\R}^2$, we may write $\eta^2 = \Delta(w; \xi, \eta) - 3w\xi$ for any $w \in \R$. In particular, the hypotheses $\sigma < 6^{-1}C_\star^{-2}$ from \eqref{eq: coarse dec 0} imply that 
\begin{equation}\label{eq: coarse dec 8}
    2C_{\star}^{-1} \leq (2C_{\star})^{-1/2} \leq |\eta| \leq C_{\star} \qquad \textrm{whenever $\eta \in \mathrm{supp}_{\eta}\, \fe_{\pm}$.}
\end{equation} 
Let $\delta_{\circ} := \delta/100$ and define $\cE := \widehat{\R}^2 \setminus \cN$ where 
\begin{equation}\label{eq: coarse dec 9}
  \cN := \big\{ (\tilde{\eta}, \tilde{\omega}) \in \widehat{\R}^2 : C_{\star}^{-1} \leq |\tilde{\eta}| \leq 2C_{\star}, \, |\tilde{\omega} - h_{\pm}^{\lambda,B}(0, \tilde{\eta})| < \lambda^{\delta_{\circ}}  \sigma \big\}. 
\end{equation}
We may assume $4C\lambda^{-\delta} \leq \lambda^{-\delta_{\circ}}$, otherwise the desired result is trivial. Combining \eqref{eq: coarse dec 6}, \eqref{eq: coarse dec 7} and \eqref{eq: coarse dec 8} and using the hypothesis $\sigma < \lambda^{-\delta}$ from \eqref{eq: coarse dec 0}, we see that 
\begin{equation}\label{eq: coarse dec 10}
    |\partial_{\bxi}^{\bbeta} \partial_{(\tilde{\eta}, \tilde{\omega})}^{\bgamma}  A_{\pm}^{\lambda, B}(\tilde{\eta}, \tilde{\omega}; \xi, \eta)| \lesssim_{\delta, \bbeta, \bgamma, N} \lambda^{-N} \qquad \textrm{whenever $(\tilde{\eta}, \tilde{\omega}) \in \cE$}
\end{equation}
for all $\bbeta$, $\bgamma \in \N_0^2$, $N \in \N_0$. Furthermore, by repeatedly applying integration-by-parts to \eqref{eq: coarse dec 5} in the $(\xi, \eta)$ variables and combining the resulting expression with the estimate \eqref{eq: coarse dec 10}, Lemma \ref{lem: Hormander cond} ii) and H\"older's inequality,
\begin{equation}\label{eq: coarse dec 11}
    |\partial_{(\tilde{\eta}, \tilde{\omega})}^{\bgamma} (G_{\pm, x}^{\lambda, B})\;\widehat{}\;(\tilde{\eta}, \tilde{\omega})| \lesssim_{\delta, \bgamma, N} \lambda^{-N} \|f\|_{L^p(\R^2)} \qquad \textrm{whenever $(\tilde{\eta}, \tilde{\omega}) \in \cE$.}
\end{equation}

Now let $P_{\pm}^{\lambda, B}$ be the Fourier projection operator with Fourier multiplier
\begin{equation*}
    \beta_0\big(\lambda^{-\delta_{\circ}}  \sigma^{-1}(\tilde{\omega} - h_{\pm}^{\lambda, B}(0, \tilde{\eta}))\big) \big( \beta_0\big( (2C_{\star})^{-1} \tilde{\eta} \big) - \beta_0(C_{\star}\tilde{\eta}) \big)
\end{equation*}
Combining the Fourier inversion formula with a further integration-by-parts argument and \eqref{eq: coarse dec 11}, we have 
\begin{equation}\label{eq: coarse dec 12}
    \big|\big(I - P_{\pm}^{\lambda, B}\big)G_{\pm, x}^{\lambda, B}(y, w)\big| \lesssim_N \lambda^{-N} \|f\|_{L^p(\R^2)} \qquad \textrm{for all $N \in \N_0$.}
\end{equation}
A minor adjustment of this argument also shows that
\begin{equation}\label{eq: coarse dec 13}
    \big|P_{\tilde{I}}\big(I - P_{\pm}^{\lambda, B}\big)G_{\pm, x}^{\lambda, B}(y, w)\big| \lesssim_N \lambda^{-N} \|f\|_{L^p(\R^2)} \qquad \textrm{for all $N \in \N_0$}
\end{equation}
for all $\tilde{I} \in \cI(\lambda^{\delta_{\circ}}  \sigma)$. 

On the other hand, the projection $P_{\pm}^{\lambda, B}G_{\pm, x}^{\lambda, B}$ has Fourier support within the set $\cN$ defined in \eqref{eq: coarse dec 9}. This is the vertical $2\lambda^{\delta_{\circ}}  \sigma$-neighbourhood of the curve
\begin{equation*}
    \big\{(s, \gamma_{\pm}^{\lambda, B}(s)) : C_\star^{-1} \leq |s| \leq 2C_\star \big\} \qquad \textrm{where} \qquad  \gamma_{\pm}^{\lambda, B}(s) := (\partial_w \phi_{\pm})(w_B/\lambda; 0, s).
\end{equation*}
A direct calculation (\textit{c.f.} \S\ref{subsec: curv comp}) shows that 
\begin{equation}\label{eq: curv formula}
 \big(\gamma_{\pm}^{\lambda, B}\big)''(s) = \frac{2^4}{3^2} \cdot \frac{s}{(\lambda^{-1}w_B)^3} \quad \textrm{and so} \quad  |\big(\gamma_{\pm}^{\lambda, B}\big)''(s)| \geq C_{\star}^{-4} \quad \textrm{for $C_{\star}^{-1} \leq |s| \leq 2C_{\star}$.}
\end{equation}
Moreover, it is not difficult to see that $\gamma_{\pm}^{\lambda, B} \in \fG$, using the notation from Definition~\ref{def: curve class}. Furthermore, $P_{\pm}^{\lambda, B}G_{\pm, x}^{\lambda, B} \in \cZ(\gamma_{\pm}^{\lambda, B}; 2 \lambda^{\delta_{\circ}}  \sigma)$ and so we may apply Lemma~\ref{lem: small cap dec} to deduce that
\begin{equation*}
    \|P_{\pm}^{\lambda, B}G_{\pm, x}^{\lambda, B}\|_{L^p(\R^2)} \lesssim_{p, \delta}  \sigma^{-(1/2 - 1/p)} \lambda^{2\delta_{\circ}}  \Big(\sum_{\tilde{I} \in \cI(\lambda^{\delta_{\circ}}  \sigma)} \|P_{\tilde{I}}P_{\pm}^{\lambda, B}G_{\pm, x}^{\lambda, B}\|_{L^p(\R^2)}^p \Big)^{1/p}.
\end{equation*}
Combining this with \eqref{eq: coarse dec 12} and \eqref{eq: coarse dec 13}, we therefore have
\begin{equation}\label{eq: coarse dec 14}
    \|G_{\pm, x}^{\lambda, B}\|_{L^p(\R^2)} \lesssim_{p, \delta, N}  \sigma^{-(1/2 - 1/p)} \lambda^{2\delta_{\circ}} \Big(\sum_{\tilde{I} \in \cI(\lambda^{\delta_{\circ}}  \sigma)} \|P_{\tilde{I}}G_{\pm, x}^{\lambda, B}\|_{L^p(\R^2)}^p \Big)^{1/p}, 
\end{equation}
up to a rapidly decreasing $L^p$ error. 

We may write
\begin{equation}\label{eq: coarse dec 15}
G_{\pm, x}^{\lambda, B} = \sum_{I \in \cI(\sigma)} G_{\pm, x, I}^{\lambda, B} \quad \textrm{where} \quad  G_{\pm, x, I}^{\lambda, B}(\bz) := U^{\lambda}_{\pm, I}f(x, \bz_B + \bz)\rho_B(\bz_B + \bz) 
\end{equation}
for all $I \in \cI(\sigma)$. Each $G_{\pm, x, I}^{\lambda, B}$ is of the same form as the function $G_{\pm, x}^{\lambda, B}$: the only change is that the underlying amplitude $\fe_{\pm}$ is replaced with $\fe_{\pm, I}$. Given any bounded interval $J \subseteq \R$, let $\eta_J$ denote its centre. Since $\fe_{\pm, I} \in \fE_{\pm}^{\lambda}(\sigma)$, we can repeat the analysis in \eqref{eq: coarse dec 5} and \eqref{eq: coarse dec 6} with $G_{\pm, x}^{\lambda, B}$ replaced with $G_{\pm, x, I}^{\lambda, B}$ to conclude that
\begin{equation*}
    |\partial_{(\tilde{\eta}, \tilde{\omega})}^{\bgamma} (G_{\pm, x, I}^{\lambda, B})\;\widehat{}\;(\tilde{\eta}, \tilde{\omega})| \lesssim_{\delta, \bgamma, N} \lambda^{-N} \|f\|_{L^p(\R^2)} \qquad \textrm{whenever $|\tilde{\eta} - \eta_I| > 2\lambda^{\delta_{\circ}}  \sigma$,}
\end{equation*}
for all $\bgamma \in \N_0^2$. In particular, if  $\tilde{I} \in \cI(\lambda^{\delta_{\circ}}  \sigma)$, $I \in \cI(\sigma)$ satisfy $|\eta_I - \eta_{\tilde{I}}| > 10\lambda^{\delta_{\circ}}  \sigma$, then
\begin{equation}\label{eq: coarse dec 16}
    \|P_{\tilde{I}}G_{\pm, x, I}^{\lambda, B}\|_{L^p(\R^2)} \lesssim_N \lambda^{-N} \|f\|_{L^p(\R^2)}.
\end{equation}

By combining \eqref{eq: coarse dec 16} with an application of H\"older's inequality and the $L^p$-boundedness of the projection operators $P_{\tilde{I}}$, we deduce that
\begin{align*}
   \sum_{\tilde{I} \in \cI(\lambda^{\delta_{\circ}}  \sigma)} \|P_{\tilde{I}}G_{\pm, x}^{\lambda, B}\|_{L^p(\R^2)}^p &\lesssim \lambda^{\delta_{\circ}p} \sum_{I \in \cI(\sigma)} \|G_{\pm, x, I}^{\lambda, B}\|_{L^p(\R^2)}^p \\
   &\lesssim\lambda^{\delta_{\circ}p} \sum_{I \in \cI(\sigma)} \|U_{\pm, I}^{\lambda}f(x,\,\cdot\,,\,\cdot\,)\|_{L^p(w_B)}^p,
\end{align*}
where the last step follows from the definition \eqref{eq: coarse dec 15} of the $G_{\pm, x, I}^{\lambda, B}$ and the rapid decay of $\rho$. Finally, we combine the above display with \eqref{eq: coarse dec 2} and \eqref{eq: coarse dec 14} to obtain \eqref{eq: coarse dec 1}, as required. 
\end{proof}

Let $\Theta(\lambda^{-1/2})$ be a finitely-overlapping cover of $[-2C_\star, 2C_\star]^2$ by balls of radius $\lambda^{-1/2}$ and $(\psi_{\theta})_{\theta \in \Theta(\lambda^{-1/2})}$ be a smooth partition of unity subordinate to $\Theta(\lambda^{-1/2})$, satisfying $|\partial_{\bxi}^{\bbeta}\psi_{\theta}(\bxi)| \lesssim_{\bbeta} \lambda^{(\beta_1 + \beta_2)/2}$ for all $\bbeta = (\beta_1, \beta_2) \in \N_0^2$. Given $\fe_{\pm} \in \fE_{\pm}(\sigma)$ and $\theta \in \Theta(\lambda^{-1/2})$, we define $\fe_{\pm, \theta}(w; \bxi) := \fe_{\pm}(w; \bxi) \psi_{\theta}(\bxi)$.

\begin{lemma}[Fine scale decoupling]\label{lem: dec fine} For $2 \leq p \leq 4$ and $\delta > 0$, given $\lambda \geq 1$, $\lambda^{-1/2} \leq \sigma \leq 1$, we have
\begin{equation}\label{eq: dec fine}
\nonumber
    \big\| U^{\lambda}[\Phi_{\pm}; \fe_{\pm, I}]f\big\|_{L^{p}(B_{\lambda})}
    \lesssim_{\delta} 
      (\lambda \sigma^2)^{1/2 - 1/p + \delta}     \Big( \sum_{\theta \in \Theta(\lambda^{-1/2})}
    \big\| U^{\lambda}[\Phi_{\pm}; \fe_{\pm, \theta}]f\big\|_{L^{p}(w_{B_{\lambda}})}^p \Big)^{1/p},
\end{equation}
up to a rapidly decreasing $L^p$ error, whenever $I \in \cI(\sigma)$, $\fe_{\pm} \in \fE_{\pm}(\sigma)$ and $B_{\lambda} \subseteq \R^3$ is a $\lambda$-ball. 
\end{lemma}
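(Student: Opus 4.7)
The plan is to reduce the fine-scale $\ell^p$ decoupling claim to a unit-scale variable coefficient $\ell^p$ decoupling theorem via parabolic rescaling. Set $\bxi_0 := (0, \eta_I)$, where $\eta_I$ is the centre of $I$, and introduce rescaled variables $\tilde{\bxi} := \sigma^{-1}(\bxi - \bxi_0)$, $\tilde{\bx} := \sigma \bx$, $\tilde w := \sigma^2 w$, $\tilde \lambda := \lambda \sigma^2 \ge 1$. Taylor expanding $\phi_\pm$ to second order around $\bxi_0$ in the $\bxi$ variable and absorbing the zeroth-order and linear-in-$\tilde\bxi$ terms via a unimodular modulation in $w$ and a $\tilde w$-dependent translation of $\tilde \bx$, one obtains
\[
\big|U^{\lambda}[\Phi_{\pm}; \fe_{\pm, I}]f(\bx, w)\big| = \big|U^{\tilde \lambda}[\tilde \Phi_{\pm}; \tilde \fe]\tilde g(\tilde \bx', \tilde w)\big|,
\]
where $\tilde\bx'$ is an appropriate $\tilde w$-dependent translate of $\tilde\bx$; the rescaled amplitude $\tilde \fe(\tilde w; \tilde \bxi) := \fe_{\pm, I}(\tilde w; \bxi_0 + \sigma \tilde \bxi)$ is supported at unit scale in $\tilde \bxi$ with $O(1)$ derivatives (using \eqref{eq: amplitude deriv e}); the rescaled function $\tilde g$ satisfies $\|\tilde g\|_{L^p(\R^2)} \sim \sigma^{2/p}\|f\|_{L^p(\R^2)}$; and $\tilde \Phi_{\pm}(\tilde \bx, \tilde w; \tilde \bxi) = \tilde \bx \cdot \tilde \bxi + \tilde \phi(\tilde w; \tilde \bxi)$ is a translation-invariant phase whose Hessian satisfies $\partial^2_{\tilde \bxi} \tilde \phi(\tilde w; \tilde \bxi) = \partial^2_{\bxi} \phi_{\pm}(\tilde w; \bxi_0 + \sigma \tilde \bxi)$. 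In particular, by Lemma~\ref{lem: Hormander cond}, $\tilde \Phi_{\pm}$ is of H\"ormander type with derivative estimates uniform in $\sigma$ and $\lambda$.

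The next step is to apply a variable coefficient $\ell^p$ decoupling inequality at scale $\tilde \lambda^{-1/2}$ to the rescaled operator: for $2 \le p \le 4$ and any $\tilde \lambda$-ball $B_{\tilde \lambda} \subseteq \R^3$,
\[
\big\|U^{\tilde \lambda}[\tilde \Phi_{\pm}; \tilde \fe]\tilde g\big\|_{L^p(B_{\tilde \lambda})} \lesssim_{\delta} \tilde \lambda^{1/2-1/p+\delta} \Big( \sum_{\tilde \theta} \big\|U^{\tilde \lambda}[\tilde \Phi_{\pm}; \tilde \fe \tilde \psi_{\tilde \theta}]\tilde g\big\|_{L^p(w_{B_{\tilde \lambda}})}^p \Big)^{1/p},
\]
where $\tilde \theta$ ranges over a covering of the $\tilde \bxi$-support of $\tilde \fe$ by $\tilde\lambda^{-1/2}$-caps. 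The rescaled image of $B_\lambda$ is an ellipsoidal tube of dimensions $\tilde \lambda/\sigma \times \tilde \lambda/\sigma \times \tilde \lambda$, which one covers by $O(\sigma^{-2})$ $\tilde \lambda$-balls and then sums the above display $\ell^p$-wise; the Schwartz weights on the individual balls add up to an essentially tube-adapted weight which corresponds to $w_{B_\lambda}$ after unscaling. Undoing the rescaling, the $\sigma$-factors from the Jacobian of the spatial change of variables and from $\|\tilde g\|_{L^p}$ cancel exactly on both sides; the caps $\tilde \theta$ of radius $\tilde \lambda^{-1/2}$ pull back to caps $\theta$ of radius $\sigma \tilde \lambda^{-1/2} = \lambda^{-1/2}$ in $\bxi$, namely those $\theta \in \Theta(\lambda^{-1/2})$ which intersect $I$; and the decoupling constant $\tilde \lambda^{1/2 - 1/p + \delta} = (\lambda \sigma^2)^{1/2 - 1/p + \delta}$ is precisely the factor claimed in the statement.

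The main obstacle is the variable coefficient $\ell^p$ decoupling theorem itself. As noted in the text preceding Proposition~\ref{prop: ls decoupling}, the matrix \eqref{eq: Hessian} has eigenvalues of opposite signs, so the mixed Hessian $\partial_w \partial^2_{\bxi} \phi_{\pm}$ has hyperbolic signature; this is inherited by $\tilde \phi$ and precludes any appeal to $\ell^2$ decoupling, forcing the restriction $p \le 4$. The constant coefficient version of the decoupling used here is the Bourgain--Demeter theorem for the hyperbolic paraboloid~\cite{BD2017}, and its extension to variable coefficient H\"ormander-type phases with the regularity produced by the rescaling above follows established iteration schemes in the variable coefficient decoupling literature, as in~\cite{BHS}. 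The secondary technical task of verifying that the rescaled objects $\tilde \Phi_{\pm}$ and $\tilde \fe$ satisfy derivative estimates uniform in $\sigma$ and $\lambda$ is handled directly by the scale-invariant estimates built into the definition of the amplitude class $\fE_{\pm}(\sigma)$ and by Lemma~\ref{lem: Hormander cond}.
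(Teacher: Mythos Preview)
Your proposal is correct and follows essentially the same route as the paper's own (sketch of) proof: parabolic rescaling of the $\sigma$-localised operator to a unit-scale variable coefficient H\"ormander-type operator, application of the variable coefficient extension of the Bourgain--Demeter $\ell^p$ decoupling for hyperbolic surfaces \cite{BD2017} at scale $\tilde\lambda^{-1/2} = \sigma^{-1}\lambda^{-1/2}$, and unscaling to recover caps at scale $\lambda^{-1/2}$ with the claimed constant $(\lambda\sigma^2)^{1/2-1/p+\delta}$. You supply more detail than the paper (the explicit rescaling, the handling of the ellipsoidal image of $B_\lambda$), and the paper points to \cite{MR4078231} rather than \cite{BHS} for the variable coefficient decoupling step, but the strategy is the same.
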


The proof follows from variable coefficient $\ell^p$ decoupling, via a scaling argument.

\begin{proof}[Proof (sketch)] Recall that $\Phi_{\pm}$ satisfies conditions H1) and H2) on a neighbourhood of $\supp \fe_{\pm, I}$ and that $\mathrm{supp}_{\xi, \eta} \,\fe_{\pm, I}$ is contained in a ball of radius $O(\sigma)$. We apply parabolic rescaling to the operator $U^{\lambda}[\Phi_{\pm}; \fe_{\pm, I}]$ on right-hand side of \eqref{eq: dec fine}, sending it to an operator with amplitude localised at unit scale. This can be decoupled using the natural variable coefficient generalisation of the $\ell^p$ decoupling inequality for surfaces of non-vanishing Gaussian curvature from \cite[Theorem 1.1]{BD2017}. We carry out this decoupling into caps at scale $\sigma^{-1}\lambda^{-1/2}$. Undoing the parabolic rescaling, we obtain the desired decoupling for our original operator into caps at scale $\lambda^{-1/2}$.

We remark that the required variable coefficient decoupling inequality can be derived from the constant coefficient following the argument of \cite{MR4078231} (which treats the case of non-homogeneous phase functions); see also \cite[Theorem 7.1.2.]{Schippa_thesis} for a closely related result. 
\end{proof}

Finally, for each $\theta \in \Theta(\lambda^{-1/2})$, let
\begin{equation*}
    K^{\lambda}[\Phi_{\pm}; \fe_{\pm, \theta}](\bx, w) := \frac{1}{(2\pi)^2} \int_{\widehat{\R}^2} e^{i\Phi_{\pm}^{\lambda}(\bx, w; \bxi)} \fe_{\pm, \theta}^{\lambda}(w; \bxi)\,\ud \bxi
\end{equation*}
denote the kernel of the localised propagator $U^{\lambda}[\Phi_{\pm}; \fe_{\pm, \theta}]$, viewed as a Fourier multiplier acting on functions on $\R^2$ for fixed $w \in \R$. 

\begin{lemma}[Localised kernel estimates]\label{lem: loc kernel} For $\lambda^{-1/2} \leq \sigma \leq 1$, we have
\begin{equation*}
    |K^{\lambda}[\Phi_{\pm}; \fe_{\pm, \theta}](\bx, w)| \lesssim \lambda^{-1}\big( 1 + \lambda^{-1/2}|\bx + \partial_{\xi}\phi_{\pm}^{\lambda}(w; \bxi_{\theta})|\big)^{-100}, \quad \theta \in \Theta(\lambda^{-1/2}),
\end{equation*}
whenever $\fe_{\pm} \in \fE_{\pm}(\sigma)$.
\end{lemma}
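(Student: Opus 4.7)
The plan is to reduce the defining oscillatory integral to unit frequency scale via the change of variables $\bxi = \bxi_{\theta} + \lambda^{-1/2} \boldsymbol{\zeta}$, where $\bxi_{\theta}$ denotes the centre of the cap $\theta$. The Jacobian produces the factor $\lambda^{-1}$ appearing in the claimed bound. After this rescaling, the new amplitude (which absorbs $\psi_\theta$ and $\fe_{\pm}^{\lambda}$) is supported in the unit $\boldsymbol{\zeta}$-ball and, in view of the derivative bounds $|\partial_{\bxi}^{\bbeta} \psi_{\theta}| \lesssim_{\bbeta} \lambda^{|\bbeta|/2}$ together with \eqref{eq: amplitude deriv e}, has all $\boldsymbol{\zeta}$-derivatives uniformly bounded in $\lambda$ and $w$.

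Next I would Taylor expand the phase $\Phi_{\pm}^{\lambda}(\bx, w; \bxi_{\theta} + \lambda^{-1/2} \boldsymbol{\zeta})$ in $\boldsymbol{\zeta}$ around $\boldsymbol{\zeta} = 0$. The $\boldsymbol{\zeta}$-independent term is a pure modulation and plays no role. The linear term, using $\partial_{\bxi} \Phi_{\pm}^{\lambda} = \bx + \partial_{\bxi} \phi_{\pm}^{\lambda}$ and the homogeneity identity $\lambda^{1/2} \partial_{\bxi} \phi_{\pm}(w/\lambda; \bxi_{\theta}) = \lambda^{-1/2} \partial_{\bxi} \phi_{\pm}^{\lambda}(w; \bxi_{\theta})$, equals $\lambda^{-1/2}(\bx + \partial_{\bxi} \phi_{\pm}^{\lambda}(w; \bxi_{\theta})) \cdot \boldsymbol{\zeta}$; this is precisely the combination controlling the decay in the target estimate. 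The remainder $\Psi^{\lambda}(w; \boldsymbol{\zeta})$ consists of Taylor terms of order $\geq 2$ in $\boldsymbol{\zeta}$, and by the chain rule its $\boldsymbol{\zeta}$-derivatives of order $|\bbeta|$ are controlled by $\lambda^{1 - |\bbeta|/2}\sup |\partial_{\bxi}^{\bbeta} \phi_{\pm}(w/\lambda; \,\cdot\,)|$. Since $\fe_{\pm} \in \fE_{\pm}(\sigma) \subseteq \fA_{\pm}(C_{\star}^{-1})$, the parameter $\tau$ appearing in Lemma~\ref{lem: Hormander cond} ii) is an absolute constant, and these derivatives are therefore uniformly bounded for $|\bbeta| = 2$ and actually decay in $\lambda$ for $|\bbeta| \geq 3$.

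The final step is standard non-stationary phase: I would repeatedly integrate by parts in $\boldsymbol{\zeta}$ using the transpose of the first-order differential operator $\lambda^{-1/2}(\bx + \partial_{\bxi} \phi_{\pm}^{\lambda}(w; \bxi_{\theta})) \cdot \nabla_{\boldsymbol{\zeta}} / |\lambda^{-1/2}(\bx + \partial_{\bxi} \phi_{\pm}^{\lambda}(w; \bxi_{\theta}))|^2$, which preserves the pure oscillation $e^{i \lambda^{-1/2}(\bx + \partial_{\bxi} \phi_{\pm}^{\lambda}(w; \bxi_{\theta})) \cdot \boldsymbol{\zeta}}$. Each integration-by-parts gains a factor of $|\lambda^{-1/2}(\bx + \partial_{\bxi} \phi_{\pm}^{\lambda}(w; \bxi_{\theta}))|^{-1}$, and the uniformly bounded $\boldsymbol{\zeta}$-derivatives of both the rescaled amplitude and of $e^{i\Psi^{\lambda}(w;\,\cdot\,)}$ ensure that no losses in $\lambda$ arise. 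Iterating $100$ times and combining with the $\lambda^{-1}$ prefactor yields the claimed rapid decay.

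I do not anticipate a serious obstacle: the argument is the standard stationary-phase reduction tailored to caps at scale $\lambda^{-1/2}$. The one point requiring care is confirming that the Taylor remainder's higher-order $\boldsymbol{\zeta}$-derivatives are uniformly bounded after rescaling, which is precisely why the derivative estimates of Lemma~\ref{lem: Hormander cond} ii) are stated with absolute constants on the class $\fA_{\pm}(C_{\star}^{-1})$ to which $\fe_{\pm}$ belongs.
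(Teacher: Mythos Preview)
Your proposal is correct and follows essentially the same approach as the paper: the paper performs the same change of variables $\bxi \mapsto \bxi_{\theta} + \lambda^{-1/2}\bxi$, isolates the same linear phase $\lambda^{-1/2}\langle \bx + \partial_{\bxi}\phi_{\pm}^{\lambda}(w;\bxi_{\theta}), \bxi\rangle$, absorbs the Taylor remainder into the amplitude, invokes Lemma~\ref{lem: Hormander cond} ii) and \eqref{eq: amplitude deriv e} to bound the $\bxi$-derivatives of the new amplitude uniformly, and finishes by repeated integration-by-parts. The only point you leave implicit is that the hypothesis $\sigma \geq \lambda^{-1/2}$ is what makes the rescaled $\fe_{\pm}$-derivatives (which scale like $(\lambda^{-1/2}\sigma^{-1})^{|\bbeta|}$) uniformly bounded, but the paper is equally terse on this.
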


In particular, Lemma~\ref{lem: loc kernel} implies that
\begin{equation*}
    \sup_{w \in \R} \|K^{\lambda}[\Phi_{\pm}; \fe_{\pm, \theta}](\,\cdot\,, w)\|_{L^1(\R^2)} \lesssim 1 \qquad \textrm{for all $\theta \in \Theta(\lambda^{-1/2})$,} 
\end{equation*}
 which leads to favourable $L^{\infty}$ bounds for the localised propagators. 

\begin{proof}[Proof (of Lemma~\ref{lem: loc kernel})] By a change of variables, 
\begin{equation*}
  K^{\lambda}[\Phi_{\pm}; \fe_{\pm, \theta}](\bx, w) := \frac{\lambda^{-1}}{(2\pi)^2} e^{i \Phi_{\pm}(\bx, w; \bxi_{\theta})} \int_{\widehat{\R}^2} e^{i\lambda^{-1/2}\inn{\bx + \partial_{\bxi}\phi_{\pm}^{\lambda}(w; \bxi_{\theta})}{\bxi}} e_{\pm, \theta}^{\lambda}(w; \bxi)\ud \bxi  
\end{equation*}
where 
\begin{equation*}
    e_{\pm, \theta}^{\lambda}(w; \bxi) := e^{i \cE^{\lambda}_{\pm, \theta}(w;\bxi)} \fe_{\pm, \theta}(w/\lambda; \bxi_{\theta} + \lambda^{-1/2}\bxi)
\end{equation*}
for
\begin{equation*}
    \cE^{\lambda}_{\pm, \theta}(w;\bxi) := \phi_{\pm}^{\lambda}(w; \bxi_{\theta} + \lambda^{-1/2}\xi) - \phi_{\pm}^{\lambda}(w; \bxi_{\theta}) - \lambda^{-1/2}\inn{\partial_{\bxi}\phi_{\pm}^{\lambda}(w; \bxi_{\theta})}{\bxi}.
\end{equation*} 
By Lemma~\ref{lem: Hormander cond} ii) and \eqref{eq: amplitude deriv e}, we have $|\partial_{\bxi}^{\bbeta}e_{\pm, \theta}^{\lambda}(w; \bxi)| \lesssim_{\bbeta}  1$ for all $\bbeta \in \N_0^2$. The desired result now follows by repeated integration-by-parts.
\end{proof}

\begin{proof}[Proof (of Proposition~\ref{prop: ls decoupling})] Let $2 \leq p \leq 4$, $\delta > 0$ and fix $\lambda \geq 1$, $\lambda^{-1/2} < \sigma \leq 1$ and $\fe_{\pm} \in \fE_{\pm}(\sigma)$. By Lemma~\ref{lem: loc kernel} and the fact that $|w| \lesssim \lambda$ whenever $w \in \mathrm{supp}_w\,K^{\lambda}[\Phi_{\pm}; \fe_{\pm}]$, the operator $U^{\lambda}[\Phi_{\pm}; \fe_{\pm}]$ is essentially local. In particular, it suffices to prove  \eqref{eq: ls decoupling} with the left-hand norm restricted to some $\lambda$-ball $B_{\lambda} \subseteq \R^3$.

By combining Lemma~\ref{lem: dec coarse} and Lemma~\ref{lem: dec fine}, using the rapid decay of $w_{B_{\lambda}}$, we deduce that 
\begin{equation*}
    \big\| U^{\lambda}[\Phi_{\pm}; \fe_{\pm}]f\big\|_{L^{p}(B_{\lambda})}
    \lesssim_{\delta} \lambda^{\delta}
      (\lambda \sigma)^{1/2 - 1/p}   
    \Big( \sum_{\theta \in \Theta(\lambda^{-1/2})}
    \big\| U^{\lambda}[\Phi_{\pm}; \fe_{\pm, \theta}]f\big\|_{L^{p}(\R^3)}^p \Big)^{1/p}
\end{equation*}
holds up to the inclusion of a rapidly decreasing $L^p$ error. It remains to show 
\begin{equation}\label{eq: exceptional 1}
    \Big( \sum_{\theta \in \Theta(\lambda^{-1/2})}
    \big\| U^{\lambda}[\Phi_{\pm}; \fe_{\pm, \theta}]f\big\|_{L^{p}(\R^3)}^p \Big)^{1/p} \lesssim \lambda^{1/p}  \|f\|_{L^p(\R^2)}.
\end{equation}

By Plancherel's theorem (since our phases are translation-invariant: more generally one may appeal to H\"ormander's oscillatory integral theorem~\cite{Hormander1973}), 
\begin{equation}\label{eq: exceptional 2}
    \big\| U^{\lambda}[\Phi_{\pm}; \fe_{\pm, \theta}]f\big\|_{L^2(\R^3)} \lesssim \lambda^{1/2} \|f_{\theta}\|_{L^2(\R^2)},
\end{equation}
where $f_{\theta}$ is defined by $\widehat{f}_{\theta} := \tilde{\psi}_{\theta} \cdot \widehat{f}$ for $\tilde{\psi}_{\theta} \in C^{\infty}_c(\widehat{\R}^2)$ satisfying $\tilde{\psi}_{\theta}(\bxi) = 1$ for $\bxi \in 2 \cdot \theta$ and $\supp \tilde{\psi}_{\theta} \subseteq 4 \cdot \theta$. By taking the $\ell^2$ sum of \eqref{eq: exceptional 2} over all $\theta \in \Theta(\lambda^{-1/2})$ and summing the resulting right-hand side using Plancherel's theorem, we obtain the $p = 2$ case of \eqref{eq: exceptional 1}. 

On the other hand, the kernel estimate from Lemma~\ref{lem: loc kernel} readily implies that 
\begin{equation*}
    \|U^{\lambda}[\Phi_{\pm}; \fe_{\pm, \theta}]f\|_{L^{\infty}(\R^3)} \lesssim  \|f\|_{L^{\infty}(\R^2)}
\end{equation*}
for all $\theta \in \Theta(\lambda^{-1/2})$. This is precisely the $p = \infty$ case of \eqref{eq: exceptional 1}. Interpolating this with the above $L^2$ yields \eqref{eq: exceptional 1} for all $2 \leq p \leq \infty$. 
\end{proof}

\appendix




\section{Quantified oscillatory integral estimates}\label{appendix: quantification}

Here we discuss the polynomial dependence on $\gamma$ and $W$ in Theorem~\ref{thm: general propagator}. 

We first note that, by rescaling in the $\bx$, $w$ and $\bxi$ variables, it suffices to consider the case $W = 1$. Note that the rescaling enlarges the size of the support of $a$, but we can simply decompose the support into $O(W^C)$ many pieces of diameter $1$ and then translate each piece back to the origin.

It remains to track the dependency on $\gamma$ in the proof of Theorem 2.8 II) from \cite{CGGHMW}. The H\"ormander condition \eqref{eq: Hormander gamma} appears in an application of the `universal' oscillatory integral estimate of Stein \cite{Stein1986} (see \cite[Theorem 1.3]{CGGHMW}, which is used in the proof of \cite[Proposition 9.1]{CGGHMW}), the `universal' geometric estimates of Wisewell~\cite{Wisewell2005} (see \cite[Proposition 1.4]{CGGHMW}, which is used in the proof of \cite[Proposition 6.4]{CGGHMW}) and a direct application in \cite[Corollary 6.7]{CGGHMW}. It is easy to verify the desired polynomial dependence on $\gamma$ from these steps. 

The Nikodym non-compression hypothesis is used in the proof of \cite[Proposition 5.3]{CGGHMW} to prove a crucial sublevel set estimate. The following quantified variant can be established via a straightforward adaptation of the proof of \cite[Proposition 5.3]{CGGHMW}.

\begin{proposition} There exists some $M \in \N$ such that the following holds. Suppose $g_j \colon (-1,1) \times \B^N \to \R$, $1 \leq j \leq 3$, are real analytic and let $I_{\circ} \subseteq (-1,1)$ and $Y_{\circ} \subset \B^N$ be compact sets containing the origin. For $\gamma > 0$, $W \geq 1$ suppose that matrix
\begin{equation*}
    B(s;y) := (b_{i,j}(s;y))_{1 \leq i, j \leq 3}\quad \textrm{where} \quad b_{i,j}(s;y) := \frac{(\partial_t^i g_j)(s;y) }{i!}, \quad 0 \leq i,\,1 \leq j \leq 3,
\end{equation*}
satisfies $|\det B(s;y)| \geq \gamma > 0$ and
\begin{equation*}
     \big(\sum_{\substack{0 \leq i \leq 3 \\ 1 \leq j \leq 3}} |b_{i,j}(s;y)|^2\big)^{1/2} \leq W, \qquad |b_{i,j}(s;y)| \leq W^{i+1} \qquad \textrm{for all $i \in \N_0$, $1 \leq j \leq 3$,}
\end{equation*}
for all $(s;y) \in I_{\circ} \times Y_{\circ}$. Then the sublevel set estimate
\begin{equation}\label{eq: sublevel}
    \Big|\Big\{t \in I_{\circ} : \big|1 - \sum_{j=1}^3 \mu_j g_j(t;y)\big| < \sigma\Big\}\Big| \lesssim (W\gamma^{-1})^M \sigma^{1/3}
\end{equation}
holds uniformly over all $\sigma > 0$, $y \in Y_{\circ}$ and all scalars $\mu_j \in \R$, $1 \leq j \leq m$. 
\end{proposition}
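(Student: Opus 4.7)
The plan is to combine the Nikodym invertibility hypothesis, which forces some low-order derivative of $f(t) := 1 - \sum_{j=1}^3 \mu_j g_j(t;y)$ to be quantitatively large at every point of $I_\circ$, with a classical higher-order sublevel set estimate. Write $\mu^* := \max_j |\mu_j|$ and $E := \{t \in I_\circ : |f(t)| < \sigma\}$, so that the goal is to bound $|E|$. We first make a harmless reduction: since $|g_j(\cdot;y)| \leq W$ (from the hypothesis $|b_{0,j}| \leq W$), one has $|f - 1| \leq 3W\mu^*$, so if $\sigma \leq 1/4$ and $3W\mu^* < 1/2$ then $E = \emptyset$. We may therefore assume $\mu^* \geq (6W)^{-1}$; similarly, we may assume $\sigma$ is smaller than any fixed negative power of $W/\gamma$, lest the target bound be trivial.

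The central identity --- interpreting the matrix $B(s;y)$ of the hypothesis so that its rows correspond to derivative order $i \in \{1,2,3\}$ --- is
\begin{equation*}
(-f'(s),\,-f''(s)/2,\,-f'''(s)/6)^T \;=\; B(s;y)\,(\mu_1, \mu_2, \mu_3)^T.
\end{equation*}
Cramer's rule, combined with $\|B(s;y)\|_\infty \lesssim W^4$ and $|\det B(s;y)| \geq \gamma$, yields $\|B(s;y)^{-1}\|_{\infty\to\infty} \lesssim W^{C_0}/\gamma$ for some absolute $C_0 \in \N$; hence
\begin{equation*}
\max_{k \in \{1,2,3\}} \frac{|f^{(k)}(s)|}{k!} \;\gtrsim\; A \;:=\; \frac{\gamma\, \mu^*}{W^{C_0}} \qquad \textrm{for every } s \in I_\circ.
\end{equation*}
Next, decompose $I_\circ = S_1 \cup S_2 \cup S_3$ with $S_k := \{s : |f^{(k)}(s)|/k! \geq A/3\}$, write each $S_k$ as a disjoint union of its connected components $\{J_{k,\ell}\}_\ell$, and apply on each component the higher-order van der Corput sublevel estimate (obtained by comparing the $(k+1)$-point divided difference of $f$ at well-spaced points of $E \cap J_{k,\ell}$ against the assumed lower bound on $|f^{(k)}|$):
\begin{equation*}
|E \cap J_{k,\ell}| \;\lesssim\; (\sigma/A)^{1/k} \;\leq\; (\sigma/A)^{1/3}.
\end{equation*}

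The main obstacle is to control the total number of components, $N := \sum_k |\{\ell\}|$, by a polynomial in $W/\gamma$. This amounts to counting the level crossings of each real-analytic function $|f^{(k)}|$ through $k! A/3$ on $I_\circ$, for $k \in \{1,2,3\}$. The hypothesis $|b_{i,j}(s;y)| \leq W^{i+1}$ for all $i \in \N_0$ guarantees that each $g_j(\cdot;y)$ extends holomorphically to a complex neighborhood of $I_\circ$ of radius $\gtrsim W^{-1}$, with uniformly controlled growth; a Jensen-type counting inequality then yields $N \lesssim (W/\gamma)^{C_1}$ for some absolute $C_1$. Combining all ingredients and using $\mu^* \geq (6W)^{-1}$ to absorb the factor $A^{-1/3} \lesssim W^{(C_0+1)/3} \gamma^{-1/3}$, one concludes $|E| \lesssim N \cdot (\sigma/A)^{1/3} \lesssim (W/\gamma)^M \sigma^{1/3}$ for $M := C_1 + (C_0+1)/3 + 1$, as required.
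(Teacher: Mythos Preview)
The paper does not prove this proposition; it defers to an adaptation of \cite[Proposition~5.3]{CGGHMW}, so there is no in-paper argument to compare against. Your strategy --- invert $B(s;y)$ via Cramer to force $\max_{1\le k\le 3}|f^{(k)}(s)/k!|\ge A:=c\,\gamma\mu^*/W^{C_0}$ at every $s\in I_\circ$, then apply the order-$k$ van der Corput sublevel bound on pieces and sum --- is the natural one and is almost certainly what that adaptation amounts to.

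The gap is in the component count. A Jensen-type zero count for $h_\pm:=f^{(k)}\mp k!A/3$ needs both a reference point with $|h_\pm|$ bounded below polynomially in $\gamma/W$ (nothing you have said rules out $h_\pm$ being uniformly tiny on $I_\circ$) and a way to propagate from a single disc of radius $\sim W^{-1}$ --- the guaranteed radius of holomorphic extension --- to all of $I_\circ$, whose length may be of order $1$; you supply neither. A simpler fix bypasses Jensen entirely: partition $I_\circ$ into intervals $J_i$ of common length $\rho\sim\gamma/W^{C_0+5}$, so that $N\lesssim W^{C_0+5}/\gamma$ of them suffice. On each $J_i$ pick any $s_i\in J_i\cap I_\circ$ and $k_i\in\{1,2,3\}$ with $|f^{(k_i)}(s_i)|\ge k_i!A$; the next-order bound $|f^{(k_i+1)}|\lesssim \mu^* W^{k_i+2}$ (from $|b_{k_i+1,j}|\le W^{k_i+2}$ with $k_i+1\le 4$) then forces $|f^{(k_i)}|\ge k_i!A/2$ throughout $J_i$, and van der Corput gives $|E\cap J_i|\lesssim(\sigma/A)^{1/3}$. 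Summing over $i$ and inserting $\mu^*\gtrsim W^{-1}$ yields the claim with a transparent polynomial constant.
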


The Nikodym non-compression hypothesis is only used in \cite{CGGHMW} in the proof of the sublevel set estimate in \cite[Proposition 5.3]{CGGHMW}. Moreover, the sublevel set estimate in \cite[Proposition 5.3]{CGGHMW} is applied precisely once in the proof of \cite[Theorem 2.8 II)]{CGGHMW}. Thus, once the sublevel set estimate is quantified as in \eqref{eq: sublevel}, we can guarantee quantification of the final local smooth estimate in terms of the Nikodym non-compression parameter. 

We remark that the H2) condition and Nikodym non-compression conditions do \textit{not} play a role in the iteration scheme in the proof of \cite[Theorem 6.12]{CGGHMW}.




\section{Calculus computations} 

For the reader's convenience, we present the steps of some of the more involved calculus computations appearing in \S\ref{sec:loc smoothing}.




\subsection{Derivative dictionary}\label{subsec: deriv dictionary}

Applying the chain rule to the definition of $\phi_{\pm}$, together with the fact that the $t_{\pm}(w; \xi, \eta)$ are roots of $(\phi_{\xi, \eta}^w)'$, we see that
\begin{equation*}
\partial_{\xi} \phi_{\pm} = -t_{\pm}, \qquad \partial_{\eta} \phi_{\pm} = -t_{\pm}^2, \qquad \partial_w \phi_{\pm} = t_{\pm}^3
\end{equation*}
and so
\begin{equation*}
  \phi_{\xi\xi}^2\phi_{\pm} = - \partial_{\xi} t_{\pm}, \qquad \phi_{\xi\eta}^2\phi_{\pm} = - \partial_{\eta} t_{\pm}, \qquad \phi_{\eta\eta}^2\phi_{\pm} = - 2(\partial_{\eta} t_{\pm}) t_{\pm}.   
\end{equation*}
By (simple) direct computation,
\begin{equation}
    \label{eq: t derivs 1}
    \partial_{\xi}t_{\pm} = \pm \frac{1}{2} \cdot \frac{1}{\Delta^{1/2}}, \qquad \partial_{\eta}t_{\pm} = \pm  \frac{t_{\pm}}{\Delta^{1/2}}, \qquad \partial_w t_{\pm} = \mp \frac{3}{2} \cdot \frac{t_{\pm}^2}{\Delta^{1/2}}
\end{equation}
and, by combining the above displays, we see that
\begin{equation}\label{eq: xi derivs}
    \partial_{\bxi \bxi}^2 \phi_{\pm} = \mp \frac{1}{2\Delta^{1/2}}
    \begin{bmatrix}
        1 & 2 t_{\pm} \\
        2 t_{\pm} & 4t_{\pm}^2
    \end{bmatrix}.
\end{equation}
We use these identities to verify the various differential conditions on $\phi_{\pm}$.



\subsection{H\"ormander condition}\label{subsec: H2 comp} Here we verify \eqref{eq: H2 formula}. From \eqref{eq: xi derivs}, we see that $\partial_{\xi\eta}^2 \phi_{\pm} = 2t_{\pm} \partial_{\xi\xi}^2 \phi_{\pm}$ and $\partial_{\eta\eta}^2 \phi_{\pm} = 4t_{\pm}^2 \partial_{\xi\xi}^2 \phi_{\pm}$. Applying the product rule to these identities, 
\begin{equation*}
    \partial_{\bxi \bxi}^2 \partial_w\phi_{\pm} = 
    \begin{bmatrix}
        \partial_w \partial_{\xi\xi}^2 \phi_{\pm} & 2(\partial_w t_{\pm})\partial_{\xi\xi}^2\phi_{\pm} + 2t_{\pm}\partial_w \partial_{\xi\xi}^2 \phi_{\pm} \\
        2(\partial_w t_{\pm})\partial_{\xi\xi}^2\phi_{\pm} + 2t_{\pm}\partial_w \partial_{\xi\xi}^2 \phi_{\pm} & 8(\partial_w t_{\pm})t_{\pm}\partial_{\xi\xi}^2\phi_{\pm} + 4t_{\pm}^2\partial_w \partial_{\xi\xi}^2 \phi_{\pm}
    \end{bmatrix}.
\end{equation*}
Taking the determinant, we may simplify the matrix by performing the operations
\begin{equation*}
    \textrm{Row 2} \to \textrm{Row 2} - 2 t_{\pm} \times \textrm{Row 1} 
     \quad \textrm{and} \quad 
    \textrm{Col 2} \to \textrm{Col 2} - 2 t_{\pm} \times \textrm{Col 1}
\end{equation*}
to obtain
\begin{equation*}
    \det \partial_{\bxi \bxi}^2 \partial_w\phi_{\pm} = \det \begin{bmatrix}
        \partial_w \partial_{\xi\xi}^2 \phi_{\pm} & 2(\partial_w t_{\pm})\partial_{\xi\xi}^2\phi_{\pm}  \\
        2(\partial_w t_{\pm})\partial_{\xi\xi}^2\phi_{\pm}  & 0
    \end{bmatrix}
    = - 4 (\partial_w t_\pm)^2 (\partial^2_{\xi \xi} \phi_{\pm})^2.
\end{equation*}
Applying the formula for $\partial^2_{\xi \xi} \phi_{\pm}$ from \eqref{eq: xi derivs} and for $\partial_w t_{\pm}$ from \eqref{eq: t derivs 1}, the desired result follows.




\subsection{Nikodym non-compression}\label{subsec: Nik comp}  By computing the determinant in \eqref{eq: xi derivs}, we see that $\det\partial_{\bxi \bxi}^2 \phi_{\pm} \equiv 0$. 

We now verify \eqref{eq: Nik formula}. Again note  that, by \eqref{eq: xi derivs}, we have $\partial_{\xi\eta}^2 \phi_{\pm} = 2t_{\pm} \partial_{\xi\xi}^2 \phi_{\pm}$ and $\partial_{\eta\eta}^2 \phi_{\pm} = 4t_{\pm}^2 \partial_{\xi\xi}^2 \phi_{\pm}$. Thus, by applying the chain rule and exploiting obvious linear dependence relations, 
\begin{equation}\label{eq: Nik formula 1}
    \det N_{\pm} = 2^4 \det
    \begin{bmatrix}
        \partial_w \partial_{\xi\xi}^2 \phi_{\pm} & (\partial_w t_{\pm}) (\partial_{\xi \xi}^2 \phi_{\pm}) & 0 \\
        \partial_w^2 \partial_{\xi\xi}^2 \phi_{\pm} & A & (\partial_w t_{\pm})^2(\partial_{\xi\xi}^2 \phi_{\pm} ) \\
        \partial_w^3 \partial_{\xi\xi}^2 \phi_{\pm} & B & C
    \end{bmatrix}
\end{equation}
where
\begin{subequations}
\renewcommand{\theequation}{\theparentequation-\roman{equation}}
\begin{align}
\label{eq: Nik formula 2i}
 A &:= (\partial_w^2 t_{\pm}) (\partial_{\xi \xi}^2 \phi_{\pm}) + 2(\partial_w t_{\pm})(\partial_w \partial_{\xi\xi}^2 \phi_{\pm}), \\
\label{eq: Nik formula 2ii}
 B &:= (\partial_w^3 t_{\pm}) (\partial_{\xi \xi}^2 \phi_{\pm}) + 3(\partial_w^2 t_{\pm})(\partial_w \partial_{\xi\xi}^2 \phi_{\pm}) + 3(\partial_w t_{\pm})(\partial_w^2 \partial_{\xi\xi}^2 \phi_{\pm}), \\
 \label{eq: Nik formula 2iii}
 C &:= 3(\partial_w^2 t_{\pm})(\partial_w t_{\pm})(\partial_{\xi\xi}^2 \phi_{\pm}) +  3(\partial_w t_{\pm})^2(\partial_w\partial_{\xi\xi}^2 \phi_{\pm}).
\end{align}
\end{subequations}

By differentiating the right-hand formula in \eqref{eq: t derivs 1}, we have
\begin{equation*}
    \partial_w^2 t_{\pm} = \frac{3^2}{2} \Big( \frac{t_{\pm}^3}{\Delta} \pm \frac{\xi t_{\pm}^2}{2 \Delta^{3/2}} \Big), \qquad \partial_w^3 t_{\pm} = \mp \frac{3^4}{2^2} \Big( \frac{t_{\pm}^4}{\Delta^{3/2}} \pm \frac{\xi t_{\pm}^3}{\Delta^2} + \frac{\xi^2 t_{\pm}^2}{2 \Delta^{5/2}}\Big).
\end{equation*}
Whilst differentiating the top-left entry of the Hessian in \eqref{eq: xi derivs} gives
\begin{equation*}
    \partial_w \partial_{\xi\xi}^2 \phi_{\pm} = - \frac{3\xi}{2\Delta} \partial_{\xi\xi}^2 \phi_{\pm}, \quad \partial_w^2 \partial_{\xi\xi}^2 \phi_{\pm} = \frac{3^3 \xi^2}{2^2 \Delta^2} \partial_{\xi\xi}^2 \phi_{\pm}, \quad \partial_w^3 \partial_{\xi\xi}^2 \phi_{\pm} = -\frac{3^4 5\xi^3}{2^3 \Delta^3} \partial_{\xi\xi}^2 \phi_{\pm}.
\end{equation*}
Substituting these identities into \eqref{eq: Nik formula 2i}, \eqref{eq: Nik formula 2ii} and \eqref{eq: Nik formula 2iii}, we obtain
\begin{subequations}
\renewcommand{\theequation}{\theparentequation-\roman{equation}}
\begin{align}
\label{eq: Nik formula 3i}
 A &= \frac{3^2 t_{\pm}^2}{2 \Delta}\Big(t_{\pm} \pm \frac{3}{2} \cdot \frac{\xi}{\Delta^{1/2}}\Big)(\partial_{\xi\xi}^2 \phi_{\pm}), \\
 \label{eq: Nik formula 3ii}
 B &= \mp \frac{3^4 t_{\pm}^2}{2 \Delta^{3/2}}\Big(\frac{1}{2} \cdot t_{\pm}^2 \pm \frac{\xi t_{\pm}}{\Delta^{1/2}} + \frac{5}{2^2} \cdot \frac{\xi^2}{\Delta} \Big)(\partial_{\xi\xi}^2 \phi_{\pm}), \\
 \label{eq: Nik formula 3iii}
 C &= \mp \frac{3^4 t_{\pm}^4}{2^2 \Delta^{3/2}}\Big(t_{\pm} \pm \frac{\xi}{\Delta^{1/2}}\Big)(\partial_{\xi\xi}^2 \phi_{\pm}).
\end{align}
\end{subequations}
Substituting \eqref{eq: Nik formula 3i}, \eqref{eq: Nik formula 3ii} and \eqref{eq: Nik formula 3iii} into \eqref{eq: Nik formula 1} and factoring appropriately, 
\begin{equation*}
  \det N_{\pm} =  \pm \frac{3^4\xi t_{\pm}^6}{\Delta^{5/2}}  \Big(\mp \frac{1}{2\Delta^{1/2}}\Big)^3 \det
    \begin{bmatrix}
        1 & 1 & 0 \\
        -\frac{3^2\xi}{2\Delta} & \mp \frac{3t_{\pm}}{\Delta^{1/2}} - \frac{3^2\xi}{2\Delta} & 1 \\
        \frac{3^3 5 \xi^2}{2^2 \Delta} & \frac{3^3 t_{\pm}^2}{2 \Delta} \pm \frac{3^3 \xi t_{\pm}}{\Delta^{3/2}} + \frac{3^3 5 \xi^2}{2^2 \Delta}  & \mp \frac{3^2t_{\pm}}{\Delta^{1/2}} - \frac{3^2 \xi}{\Delta}
    \end{bmatrix}.
\end{equation*}
The above matrix determinant can be easily evaluated by subtracting the first column from the second column to reduce to a $2 \times 2$ determinant. Substituting the formula for $\partial_{\xi\xi}^2 \phi_{\pm}$ from \eqref{eq: xi derivs}, we obtain the desired formula \eqref{eq: Nik formula}.



\subsection{Curvature condition}\label{subsec: curv comp} Here we verify \eqref{eq: curv formula}. Note that
\begin{equation*}
    \Delta(w; 0, \eta) = \eta^2 \quad \textrm{and} \quad t_{\pm}(w; 0, \eta) = \frac{\eta \pm |\eta|}{3w}. 
\end{equation*}
In particular, there is always at most one non-zero root which is given by $t_{\sgn\eta}(w; 0, \eta) := 2\eta/(3w)$. Arguing as in the previous sections
\begin{equation*}
    \partial_{\eta \eta}^2 \partial_w \phi_{\pm} = \frac{6 t_{\pm}^2}{\Delta}\Big( t_{\pm} \pm \frac{\xi}{2\Delta^{1/2}}\Big)
\end{equation*}
and substituting the above values gives
\begin{equation*}
   \partial_{\eta\eta}^2\partial_w \phi_{\sgn \eta}(w; 0, \eta) =  6 \cdot \frac{t_{\sgn \eta}(w; 0, \eta)^3}{\eta^2} = \frac{2^4}{3^2} \cdot \frac{\eta}{w^3},
\end{equation*}
as required. 



\bibliography{reference}
\bibliographystyle{amsplain}

\end{document}